\numberwithin{equation}{section}
\theoremstyle{plain}
\newtheorem{theorem}{Theorem}[section]
\newtheorem{corollary}[theorem]{Corollary}
\newtheorem{lemma}[theorem]{Lemma}
\newtheorem{proposition}[theorem]{Proposition}
\theoremstyle{remark}
\newtheorem{remark}[theorem]{Remark}
\theoremstyle{definition}
\newtheorem{definition}[theorem]{Definition}
\newtheorem{notation}[theorem]{Notation}
\def\R{\mathbb{R}}
\def\C{\mathbb{C}}
\def\Z{\mathbb{Z}}
\newcommand{\lie}[1]{\mathfrak{#1}}
\newcommand{\lier}[1]{\mathfrak{#1}_{0}}
\newcommand{\roots}[2]{\Delta({#1}, {#2})}
\newcommand{\proots}[2]{\Delta^{+}({#1}, {#2})}
\newcommand{\stWhmg}[2]{I_{\eta,{#1},{#2}}}
\newcommand{\stWhLmg}[1]{\stWhmg{\Lambda}{#1}}
\def\stWhLsmg{\stWhLmg{\sigma}}
\def\stWhLgmg{\stWhLmg{\gamma}}
\def\gK{\lie{g},K}
\def\brgK{(\gK)}
\def\rhom{\rho_{\lie{m}}}
\def\Hom{\mathrm{Hom}}
\def\Ext{\mathrm{Ext}}
\def\tr{\mathrm{tr}}
\def\Ind{\mathrm{Ind}}
\def\Ad{\mathrm{Ad}}
\def\ad{\mathrm{ad}}
\def\pr{\mathrm{pr}}
\def\Dim{\mathrm{Dim}}
\def\Wh{\mathrm{Wh}}
\def\sgn{\mathrm{sgn}}
\def\d{\partial}
\newcommand{\vect}[1]{\boldsymbol{#1}}
\def\I{\sqrt{-1}}
\title[Standard Whittaker $\brgK$-modules]
{Socle filtrations of the standard Whittaker
  $\brgK$-modules of $Spin(r,1)$}
\author{Kenji Taniguchi}
\address{
Department of Physics and Mathematics, 
Aoyama Gakuin University, 
5-10-1, Fuchinobe, Chuo-ku, Sagamihara, Kanagawa 252-5258, Japan. }
\email{taniken@gem.aoyama.ac.jp}
\thanks{2010 {\it Mathematics subject Classification}. 
Primary 22E46,22E45}
\keywords{Whittaker modules}
\begin{document}

\begin{abstract}
Studied are the composition series of the standard Whittaker
$\brgK$-modules. 
For a generic infinitesimal character, the structures of
these modules are completely understood, but if the infinitesimal
character is integral, 
then there are not so many cases in which the structures of them are
known. 
In this paper, as an example of the integral case, 
we determine the socle filtrations of the standard Whittaker
$\brgK$-modules when $G$ is the group $Spin(r,1)$ and the infinitesimal
character is regular integral. 
\end{abstract}

\maketitle

\section{Introduction}
\label{section:introduction}

This paper is a continuation of the paper \cite{T2} in which the
author defined and examined the standard Whittaker $\brgK$-modules for
real reductive linear Lie groups. 

Let us review the definition of these modules. 
Let $G$ be a real reductive linear Lie group in the sense of \cite{V2} 
and $G=KAN$ be an Iwasawa decomposition of it. 
Let $\eta: N \longrightarrow \C^{\times}$ be a unitary character of $N$ 
and denote the differential representation $\lier{n} \to \I \R$ 
of it by the same letter $\eta$. 
We assume $\eta$ is non-degenerate, i.e. it is
non-trivial on every root space corresponding to a simple root of
$\Delta^{+}(\lier{g}, \lier{a})$. 
Define 
\begin{equation}\label{eq:space of Whittaker functions}
C^{\infty}(G/N; \eta) 
:= \{f: G \overset{C^{\infty}}{\longrightarrow} \C \,|\, 
f(gn) = \eta(n)^{-1} f(g), \enskip g \in G, n \in N\}
\end{equation}
and call it the space of Whittaker functions on $G$. 
This is a representation space of
$G$ by the left translation, which is denoted by $L$. 
Let $C^{\infty}(G/N;\eta)_{K}$ be the subspace of
$C^{\infty}(G/N;\eta)$ consisting of $K$-finite vectors. 
Let, as usual, $M$ be the centralizer of $A$ in $K$, and let 
$M^{\eta}$ 
be the stabilizer of $\eta$ in $M$. 
This subgroup acts naturally on $C^{\infty}(G/N; \eta)_{K}$ by the
right translation. 
Consider the subspace of $C^{\infty}(G/N; \eta)_{K}$ consisting of
those functions $f$ which satisfy the following conditions: 
\begin{enumerate}
\item
$f$ is a joint eigenfunction of $Z(\lie{g})$ (the center of
  the universal enveloping algebra $U(\lie{g})$) 
with eigenvalue 
$\chi_{\Lambda}$: 
$L(z) f = \chi_{\Lambda}(z) f$, $z \in Z(\lie{g})$. 
\item
For an irreducible representation $(\sigma, V_{\sigma}^{M^{\eta}})$ 
of $M^{\eta}$, 
$f$ is in the $\sigma^{\ast}$-isotypic subspace 
($\sigma^{\ast}$ is the dual of $\sigma$) 
with respect to the right action of $M^{\eta}$. 
\item
$f$ grows moderately at infinity (\cite{W}).  
\end{enumerate}
The space of functions which satisfy the above conditions
(1)--(3) is isomorphic to 
\begin{align}
\stWhLsmg 
:= 
\{f: G \overset{C^{\infty}}{\longrightarrow}
V_{\sigma}^{M^{\eta}} 
|\, 
&
f(gmn) = \eta(n)^{-1} \sigma(m)^{-1} f(g), 
\enskip g \in G, m \in M^{\eta}, n \in N; 
\notag\\
& L(z) f = \chi_{\Lambda}(z) f, \enskip z \in Z(\mathfrak{g}); 
\enskip 
\mbox{left $K$-finite};
\notag\\
& f \mbox{ grows moderately at infinity} 
\}.
\notag
\end{align}
We call this space the {\it standard Whittaker $\brgK$-module}. 

For ``generic'' infinitesimal character $\Lambda$, the structure of
$\stWhLsmg$ is completely determined in \cite{T2}. 
On the other hand, if the infinitesimal character $\Lambda$ is integral, 
its structure is not known except for the case $G = SL(2,\R)$ or
$U(n,1)$, $n \geq 2$. 
In this paper, we examine the case $G=Spin(r,1)$, $r \geq 3$, 
so that it will become a good example for the study of the case of
other general groups. 

The main results of this paper are as follows. 
For the notation 
of
irreducible modules and the diagrammatic expression of the composition
series, see \S~\ref{subsection:classification of (g,K)-modules}. 
\begin{theorem}
\label{theorem:main, 2n}
Suppose $G = Spin(2n,1)$ and the infinitesimal character 
$\Lambda=(\Lambda_{1}, \dots, \Lambda_{n})$ 
($\Lambda_{1} > \Lambda_{2} > \dots > \Lambda_{n} > 0$) is
regular integral. 
Let $\sigma$ be an irreducible representation of 
$M^{\eta} \simeq Spin(2n-2)$. 
\begin{enumerate}
\item
$\stWhLsmg$ is not zero if and only if the highest weight 
$\gamma = (\gamma_{1}, \dots, \gamma_{n-1})$ of 
$\sigma$ satisfies one of the following conditions: 
\begin{align}
&
\begin{cases}
\Lambda_{p}-n+p+1/2
\geq \gamma_{p} \geq 
\Lambda_{p+1}-n+p+3/2,
\quad 
p=1,\dots,n-1, 
\\
\Lambda_{n-1}-1/2
\geq \gamma_{n-1} \geq 
\Lambda_{n}+1/2, 
\end{cases}\label{eq:condition for sigma,2n,0} 
\\
&
\begin{cases}
\Lambda_{p}-n+p+1/2
\geq \gamma_{p} \geq 
\Lambda_{p+1}-n+p+3/2,
\quad 
p=1,\dots,n-1, 
\\
-\Lambda_{n}-1/2
\geq \gamma_{n-1} \geq 
-\Lambda_{n-1}+1/2, 
\end{cases}\label{eq:condition for sigma,2n,1} 
\\
&
\begin{cases}
i \in \{2,\dots,n\},
\\
\Lambda_{p}-n+p+1/2
\geq \gamma_{p} \geq 
\Lambda_{p+1}-n+p+3/2, 
\quad 
p=1,\dots,i-2, 
\\
\Lambda_{p+1}-n+p+1/2
\geq \gamma_{p} \geq 
\Lambda_{p+2}-n+p+3/2, 
\quad 
p=i-1,\dots,n-2, 
\\
\Lambda_{n}-1/2
\geq |\gamma_{n-1}|. 
\end{cases}\label{eq:condition for sigma,2n,0i}
\end{align}
\item
If \eqref{eq:condition for sigma,2n,0} 
(resp. \eqref{eq:condition for sigma,2n,1}) is satisfied, 
then 
$\displaystyle 
I_{\eta,\Lambda,\sigma}
\simeq 
\begin{xy}
(0,5)*{\pi_{1}}="A_{1}",
(0,0)*{\overline{\pi}_{0,n}}="A_{2}",
(0,-5)*{\pi_{0}}="A_{3}",
\ar "A_{1}";"A_{2}"
\ar "A_{2}";"A_{3}"
\end{xy} 
\left(
\mbox{resp. } 
\begin{xy}
(0,5)*{\pi_{0}}="A_{1}",
(0,0)*{\overline{\pi}_{0,n}}="A_{2}",
(0,-5)*{\pi_{1}}="A_{3}",
\ar "A_{1}";"A_{2}"
\ar "A_{2}";"A_{3}"
\end{xy}
\right)$. 

\vspace{2mm}

\item
If $i \in \{2,\dots,n-1\}$ and 
\eqref{eq:condition for sigma,2n,0i} is satisfied, 
then 
$\displaystyle 
I_{\eta,\Lambda,\sigma}
\simeq 
\begin{xy}
(-6,3.5)*{\overline{\pi}_{0,i-1}}="A_{11}",
(6,3.5)*{\overline{\pi}_{0,i+1}}="A_{12}",
(0,-3.5)*{\overline{\pi}_{0,i}}="A_{2}",
\ar "A_{11}";"A_{2}"
\ar "A_{12}";"A_{2}"
\end{xy}$. 
\item
If $i=n$ and 
\eqref{eq:condition for sigma,2n,0i} is satisfied, 
then 
$\displaystyle  
I_{\eta,\Lambda,\sigma}
\simeq 
\begin{xy}
(-9,4)*{\pi_{0}}="A_{11}",
(0,4)*{\overline{\pi}_{0,n-1}}="A_{12}",
(9,4)*{\pi_{1}}="A_{13}",
(0,-4)*{\overline{\pi}_{0,n}}="A_{2}",
\ar "A_{11}";"A_{2}"
\ar "A_{12}";"A_{2}"
\ar "A_{13}";"A_{2}"
\end{xy} 
$.
\end{enumerate}
\end{theorem}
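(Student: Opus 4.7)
The plan is to analyze $\stWhLsmg$ case-by-case according to the three conditions on $\gamma$, and in each case to carry out three steps: (i) establish the nonvanishing criterion of part (1) by computing the $K$-type content; (ii) pin down the multiset of composition factors; (iii) determine the socle filtration. The overall framework is provided by \cite{T2}; the specific new input is the classification of irreducible $\brgK$-modules of $Spin(2n,1)$ with regular integral infinitesimal character, together with the explicit form of their $K$-types, recorded in \S\ref{subsection:classification of (g,K)-modules}.

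For step (i), $\stWhLsmg$ is $K$-admissible, and a $K$-type $\tau$ of $K = Spin(2n)$ occurs with multiplicity $[\tau|_{M^{\eta}} : \sigma^{*}]$, subject to the constraint that $\tau$ be compatible with the infinitesimal character $\chi_{\Lambda}$. Applying the branching rule $Spin(2n) \downarrow Spin(2n-1) \downarrow Spin(2n-2) = M^{\eta}$ and matching against $\chi_{\Lambda}$ should show that at least one compatible $\tau$ exists precisely under one of the three conditions on $\gamma$, proving (1). For step (ii), I would use the multiplicity formula from \cite{T2}, or equivalently a direct $K$-type counting argument: sum the multiplicities of admissible $K$-types in $\stWhLsmg$ and decompose the resulting formal $K$-character as a non-negative integral combination of the $K$-characters of the irreducibles $\pi_{0}, \pi_{1}, \overline{\pi}_{0,j}$. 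This combinatorial decomposition should be unique and recover exactly the composition factors listed in cases (2)--(4).

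The main obstacle is step (iii), the determination of the socle filtration. My plan here is first to locate the socle as the unique irreducible submodule containing the minimal $K$-type of $\stWhLsmg$; in cases (3) and (4) this should be $\overline{\pi}_{0,i}$ (resp.\ $\overline{\pi}_{0,n}$), and in case (2) it should be $\overline{\pi}_{0,n}$. One then examines the quotient, whose composition factors are the remaining constituents. In cases (3) and (4), I would argue that these split off as direct summands by showing that $\Ext^{1}_{\brgK}$ vanishes between any two top-layer constituents---typically because their minimal $K$-types differ, obstructing any nontrivial $\lie{g}$-extension. In case (2), the nontriviality of the length-three filtration has to be established by exhibiting the extensions explicitly: one may either construct vectors in $\stWhLsmg$ witnessing them, or appeal to a duality between $\stWhLsmg$ and a dual standard module with known structure, thereby transferring the extension data. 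The asymmetry between $\pi_{0}$ and $\pi_{1}$ at top and bottom in \eqref{eq:condition for sigma,2n,0} vs.\ \eqref{eq:condition for sigma,2n,1} should reflect the sign of $\gamma_{n-1}$ and be traceable to an outer involution of $K = Spin(2n)$.
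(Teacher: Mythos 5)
Your outline misses the machinery the paper actually uses and, more seriously, contains a concrete error about the socle in case (2). In the paper, the unique irreducible submodule of $\stWhLsmg$ is determined by Matumoto's criterion (a factor can embed in $C^{\infty}(G/N;\eta)_{K}$ only if its Gelfand--Kirillov dimension equals $\dim N$) combined with Lemma~\ref{lemma:condition for sub}, which tests whether $\sigma$ occurs in $\delta_{p}|_{M^{\eta}}$ for \emph{every} principal series containing the candidate as a subquotient. In case (2), Proposition~\ref{proposition:condition for sigma, integral,2n} shows the socle is the discrete series $\pi_{0}$ when \eqref{eq:condition for sigma,2n,0} holds (resp.\ $\pi_{1}$ when \eqref{eq:condition for sigma,2n,1} holds), \emph{not} $\overline{\pi}_{0,n}$; the arrows in the diagram point downward onto the socle. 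Your heuristic that the socle is ``the unique irreducible submodule containing the minimal $K$-type'' does not give this, and the dependence on the sign of $\gamma_{n-1}$ actually comes from solving a first-order ODE (Lemma~\ref{lemma:pi_0,pi_1 sub}) and imposing the moderate-growth condition with $\xi>0$, not from an outer automorphism of $K$ alone.

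The rest of step (iii) is also too weak to reach the conclusion. You propose to show direct-sum splitting in cases (3), (4) by arguing that $\Ext^{1}$ vanishes because ``minimal $K$-types differ''; this is not a valid vanishing criterion and would in any case not settle the multiplicities or rule out higher floors. The paper instead combines two independent rigidity inputs: (a) Vogan's parity theorem (Theorem~\ref{theorem:Ext}), which forces each successive floor of the socle filtration to alternate in length parity, and (b) explicit $K$-type shift-operator computations (Lemma~\ref{lemma:zero shifts}, using $P_{-k}\circ P_{k}=d_{\lambda,k}\,L(C_{r+1}(u_{k}))$ and injectivity from Lemma~\ref{lemma:condition for vanishing under shift}) to show that specified $\lie g$-actions between designated factors vanish. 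Input (a) pins down which irreducibles can occupy which floor; input (b) kills the possibility of extra floors (e.g., no fourth floor in case (2)) and gives Corollary~\ref{corollary:pi_ij is multiplicity free}. Your proposal has no analogue of either ingredient, and the ``unique nonnegative decomposition of the formal $K$-character'' you invoke in step (ii) is not established in the paper and is not obviously well-posed. You would need to bring in at least the Gelfand--Kirillov dimension criterion, the parity theorem, and some replacement for the shift-operator vanishing lemma to make this approach work.
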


\begin{theorem}
\label{theorem:main, 2n+1}
Suppose $G = Spin(2n+1,1)$ and the infinitesimal character 
$\Lambda=(\Lambda_{1}, \dots, \Lambda_{n+1})$ 
($\Lambda_{1} > \Lambda_{2} > \dots 
> \Lambda_{n} > |\Lambda_{n+1}|$) is regular integral. 
Let $\sigma$ be an irreducible representation of 
$M^{\eta} \simeq Spin(2n-1)$. 
\begin{enumerate}
\item
$\stWhLsmg$ is not zero if and only if the highest weight 
$\gamma = (\gamma_{1}, \dots, \gamma_{n-1})$ of 
$\sigma$ satisfies 
\begin{equation}\label{eq:condition for sigma,2n+1,0i}
\begin{cases}
\Lambda_{p}-n+p
\geq \gamma_{p} \geq 
\Lambda_{p+1}-n+p+1
\quad 
p=1,\dots,i-2, 
\\
\Lambda_{p+1}-n+p
\geq \gamma_{p} \geq 
\Lambda_{p+2}-n+p+1, 
\quad 
p=i-1,\dots,n-2, 
\\
\Lambda_{n}-1
\geq \gamma_{n-1} \geq 
|\Lambda_{n+1}| 
\quad 
\mbox{(if \enskip $i \leq n$)}
\end{cases}
\end{equation}
for some $i \in \{2,\dots,n+1\}$. 
\item
If $i \in \{2,\dots,n\}$ and 
\eqref{eq:condition for sigma,2n+1,0i} is satisfied, 
then 
$\displaystyle 
I_{\eta,\Lambda,\sigma}
\simeq 
\begin{xy}
(-6,3.5)*{\overline{\pi}_{0,i-1}}="A_{11}",
(6,3.5)*{\overline{\pi}_{0,i+1}}="A_{12}",
(0,-3.5)*{\overline{\pi}_{0,i}}="A_{2}",
\ar "A_{11}";"A_{2}"
\ar "A_{12}";"A_{2}"
\end{xy}$. 
\item
If $i = n+1$ and 
\eqref{eq:condition for sigma,2n+1,0i} is satisfied, 
then 
$\displaystyle 
I_{\eta,\Lambda,\sigma}
\simeq 
\begin{xy}
(0,3.5)*{\overline{\pi}_{0,n}}="A_{1}",
(0,-3.5)*{\overline{\pi}_{0,n+1}}="A_{2}",
\ar "A_{1}";"A_{2}"
\end{xy}$. 
\end{enumerate}
\end{theorem}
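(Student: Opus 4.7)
The strategy will mirror the proof of Theorem~\ref{theorem:main, 2n}, with several genuine simplifications stemming from the absence of the two half-spin configurations (conditions \eqref{eq:condition for sigma,2n,0} and \eqref{eq:condition for sigma,2n,1}) that complicated the even-rank case. First I would invoke the structural result of \cite{T2} for generic infinitesimal character to obtain an explicit presentation of $\stWhLsmg$ whose Jordan--H\"older content, $K$-spectrum, and $M^\eta$-spectrum are directly computable. Specialization to a regular integral $\Lambda$ forces certain intertwining operators to degenerate, and the composition series of the specialized module is what the theorem describes.

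For the non-vanishing criterion (1), I would compute the $M^\eta$-spectrum of $\stWhLsmg$ by Frobenius reciprocity for $M^\eta N \uparrow G$, combined with the asymptotic expansion of Whittaker functions as in \cite{T2}. A highest weight $\gamma$ of $\sigma$ is admissible exactly when it is interlaced with the coordinates of $\Lambda$; each valid interlacing pattern corresponds to a unique chamber indexed by $i \in \{2, \dots, n+1\}$, and unwinding the inequalities yields precisely \eqref{eq:condition for sigma,2n+1,0i}.

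The composition factors are identified using the classification in \S\ref{subsection:classification of (g,K)-modules}. For a regular integral $\Lambda$ of $Spin(2n+1,1)$, the irreducible $\brgK$-modules with infinitesimal character $\chi_\Lambda$ whose Whittaker space of type $(\eta,\sigma)$ is non-zero are exactly those $\overline{\pi}_{0,j}$ with $j$ adjacent to $i$. A $K$-type count (or an Euler--Poincar\'e calculation against the generic-$\Lambda$ presentation) then shows that the constituents are precisely $\overline{\pi}_{0,i-1}, \overline{\pi}_{0,i}, \overline{\pi}_{0,i+1}$ in case (2) and $\overline{\pi}_{0,n}, \overline{\pi}_{0,n+1}$ in case (3), each with multiplicity one.

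The socle filtration is pinned down by determining, for each irreducible candidate, the dimension of $\Hom_{M^\eta N}(\overline{\pi}_{0,j}, \C_\eta \otimes V_\sigma^*)$, using the realization of $\overline{\pi}_{0,j}$ as a Langlands subrepresentation. The main obstacle is the indecomposability (non-splitting) assertion: one must show that $\overline{\pi}_{0,i}$ (resp. $\overline{\pi}_{0,n+1}$) is the \emph{entire} socle, with the remaining factor(s) strictly in the cosocle and with no intermediate direct summand. I would attempt this either by a direct $\Ext^1$-computation in the Whittaker category, or more concretely by exhibiting the required intertwining maps between the relevant $\overline{\pi}_{0,j}$ and $\stWhLsmg$ and checking that every non-zero submodule must contain $\overline{\pi}_{0,i}$ (resp. $\overline{\pi}_{0,n+1}$); this is the step I expect to absorb most of the technical work.
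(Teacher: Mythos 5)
Your proposal sketches plausible general strategies but leaves the genuinely hard work unexecuted, and in doing so misses the two tools on which the paper's argument actually turns. First, the paper never ``specializes'' the generic-parameter structure of \cite{T2} to integral $\Lambda$; instead it works directly at the integral parameter. There is no cheap continuity argument here: at generic $\Lambda$ the module $\stWhLgmg$ has a very different shape (it is semisimple, or close to it), and the degeneration of intertwining operators at an integral point is exactly the nontrivial content one is trying to establish, not a device one can invoke for free. Second, your discussion of the socle filtration reduces to ``one must show $\overline{\pi}_{0,i}$ is the entire socle \dots this is the step I expect to absorb most of the technical work'' --- which is a restatement of the problem, not an argument.

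The paper's route is concretely different. (i) Proposition~\ref{proposition:condition for sigma, integral,2n+1} uses the Gelfand--Kirillov dimension criterion of \cite{M1} and the branching rule $Spin(2n-1) \downarrow Spin(2n-2)$ to show the socle is a single copy of $\overline{\pi}_{0,i}$; this pins down part (1). (ii) Proposition~\ref{proposition:composition factors-1} restricts the possible constituents by comparing $K$-spectra with the fixed $M^\eta$-type $\sigma_\gamma$. (iii) The crucial length-parity constraint from Vogan's Theorem~\ref{theorem:Ext} ($\Ext^1_{\gK}(X,Y)\neq 0$ only if $\ell(X)-\ell(Y)$ is odd) forces the floor structure to alternate in parity; since $\ell(\overline{\pi}_{0,k}) = n-k+1$, this is what places $\overline{\pi}_{0,i\pm1}$ on the second floor and nothing above. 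Your proposal omits this entirely, and without it you cannot rule out, say, $\overline{\pi}_{0,i-1}$ lying in the socle alongside $\overline{\pi}_{0,i}$, or a third floor existing. (iv) The paper rules out a putative third floor (and proves the injectivity facts used in Corollary~\ref{corollary:pi_ij is multiplicity free}) by explicit $K$-type shift operators in Gelfand--Tsetlin coordinates, combined with the central-element identities $P_{-k}\circ P_k = d_{\lambda,k}\,L(C_{r+1}(u_k))$ and $P_0 = d_\lambda\,L(\mathbb{PF}_{2n+2})$ from Theorem~\ref{theorem:central elements in Ug}, showing the relevant compositions vanish while individual $P_k$'s are injective (Lemma~\ref{lemma:condition for vanishing under shift}, Lemma~\ref{lemma:zero shifts}). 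None of this machinery appears in your proposal, and a pure ``$\Ext^1$ computation in the Whittaker category'' is not something the paper (or your sketch) actually makes precise; the shift-operator calculation is what substitutes for it.

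In short: part (1) of your sketch is in the right spirit (branching plus a criterion for which irreducibles admit Whittaker embeddings), but parts (2)--(3) as proposed would not go through. You would need the parity constraint on $\Ext^1$ and a concrete vanishing/injectivity argument for the $\lie{s}$-action between adjacent $K$-types --- the latter being exactly what the Gelfand--Tsetlin shift operator formulas provide.
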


This paper is organized as follows. 
In \S\ref{section:Spin(r,1)}, 
we recall the structure of $Spin(r,1)$ and the
classification of irreducible Harish-Chandra modules of it. 
In \S\ref{section:composition factors}, 
we first show that $\stWhLsmg$ has a unique irreducible
submodule if it is non-zero. 
Also determined are 
the possible irreducible factors appearing in the composition series
of it. 
In order to determine the socle filtration of $\stWhLsmg$, 
we need to use the explicit formulas of $K$-type shift operators. 
Such operators are obtained in \S\ref{section:K-type shift operators}. 
In \S\ref{section:determination of composition series}, 
the socle filtration of $\stWhLsmg$ is completely
determined. 
The key tools for our calculation are 
Lemma~\ref{lemma:condition for vanishing under shift}, 
\ref{lemma:zero shifts} and Theorem~\ref{theorem:Ext}. 

Before going ahead, we introduce notation used in this paper. 
For a real Lie group $L$, the Lie algebra of it is denoted by
$\lier{l}$ and its complexification by 
$\lie{l} = \lier{l} \otimes_{\R} \C$. 
This notation will be applied to any Lie groups. 
For a compact Lie group $L$, the set of equivalence classes of
irreducible representations of $L$ is denoted by $\widehat{L}$. 
The representation space of $\pi \in \widehat{L}$ is denoted by
$V_{\pi}^{L}$. 
If $L$ is connected and $\pi$ is the irreducible representation
whose highest weight is $\lambda$, we also denote it by 
$V_{\lambda}^{L}$. 
For $\pi \in \widehat{L}$, the contragredient representation is
denoted by $\pi^{\ast}$, and if $\lambda$ is the highest weight of
$\pi$, then the highest weight of $\pi^{\ast}$ is denoted by
$\lambda^{\ast}$. 

Suppose that $K$ is a maximal compact subgroup of a real
reductive group $G$. 
For a $\brgK$-module $\pi$, the $K$-spectrum 
$\{\tau \in \widehat{K}\,|\, \tau \subset \pi|_{K}\}$ is denoted by 
$\widehat{K}(\pi)$. 

\section{The group $Spin(r,1)$ and its irreducible Harish-Chandra
  modules} 
\label{section:Spin(r,1)}

In the following of this paper, 
we put $G=Spin(r,1)$, $r \geq 3$, and the infinitesimal character
$\Lambda$ is assumed to be regular integral.

\subsection{Structure of $Spin(r,1)$} 
\label{subsection:structure of Spin(r,1)}

Denote by $E_{ij}$ the standard generators of
$\lie{gl}_{r+1}(\C)$ and define $A_{ij} = E_{ij}-E_{ji}$. 
The group $Spin(r,1)$ is the connected two-fold linear cover of
$SO_{0}(r,1)$. 
A maximal compact subgroup $K$ of $G$ is isomorphic to $Spin(r)$. 
Set 
\begin{align*}
\lier{k} 
&:= 
\left\{
\left.
\begin{pmatrix}
X & \vect{0} \\ {}^{t} \vect{0} & 0 
\end{pmatrix} 
\, 
\right| 
\, 
X \in \lie{so}(r)
\right\}, 
&
&
\lier{s} 
:= 
\left\{
\left.
\begin{pmatrix}
O & \I \vect{v} \\ -\I {}^{t} \vect{v} & 0 
\end{pmatrix} 
\, 
\right| 
\, 
\vect{v} \in \R^{r}
\right\}. 
\end{align*}
Then $\lier{g}=\lier{k}+\lier{s}$ realizes the Lie algebra of $G$, and
this is a Cartan decomposition of $\lier{g}$. 
Let 
\[
h := \I A_{r+1,r}
\qquad 
\lier{a} 
:= 
\R h, 
\]
and define $f \in \lier{a}^{\ast}$ by $f(h) = 1$. 
Then $\lier{a}$ is a maximal abelian subspace of $\lier{s}$. 
The restricted root system 
$\roots{\lier{g}}{\lier{a}}$ is $\{\pm f\}$. 
Choose a positive system 
$\proots{\lier{g}}{\lier{a}} 
= 
\{f\}$, 
and denote the corresponding nilpotent subalgebra 
$(\lier{g})_{f}$ by $\lier{n}$. 
One obtains an Iwasawa decomposition 
\begin{align*}
& 
\lier{g} = \lier{k} + \lier{a} + \lier{n}, 
& 
& 
G = K A N, 
\end{align*}
where $A = \exp \lier{a}$ and $N = \exp \lier{n}$. 
Let 
\begin{equation}\label{eq:basis of u}
X_{i} 
:= 
A_{r,i} + \I A_{r+1,i}
\qquad 
(1 \leq i \leq r-1). 
\end{equation}
Then 
$\{X_{i} \,|\, 1 \leq i \leq r-1\}$ is a basis of
$\lier{n}$. 

In our $Spin(r,1)$ case, 
$M$ is isomorphic to $Spin(r-1)$. 
It acts on the space of non-degenerate unitary characters of $N$ by 
$\eta \mapsto \eta^{m}(n) := \eta(m^{-1}nm)$, $m \in M$. 
Therefore, we may choose a manageable unitary character when we
calculate Whittaker modules. 
We use the non-degenerate character $\eta$ defined by 
\begin{align}\label{eq:definition of psi}
&
\eta(X_{i}) = 0, \quad i=1,\dots,r-2,
&
& 
\eta(X_{r-1}) = \I \xi, \quad \xi > 0. 
\end{align}
It is easy to see that $M^{\eta}$ is isomorphic to 
$Spin(r-2)$.

\subsection{Classification of irreducible Harish-Chandra modules}
\label{subsection:classification of (g,K)-modules}

We review the classification of irreducible Harish-Chandra modules of
$G=Spin(r,1)$ with regular integral infinitesimal character. 
For details, see \cite{C} for example. 
We use the notation $\pi_{0,i}$, $\overline{\pi}_{0,i}$ etc in
\cite{C}. 

For an irreducible representation $\delta$ of $M$ and 
an element $\nu \in \lie{a}^{\ast}$, 
let $X_{P}(\delta,\nu)$ be the Harish-Chandra module of 
the principal series representation 
$\Ind_{P}^{G}(\delta \otimes e^{\nu+\rho_{A}})$. 
Here, $P=MAN$ is a minimal parabolic subgroup of $G$ and 
$\rho_{A} := \frac{1}{2} \tr (\ad_{\lie{a}}|_{\lie{n}}) 
\in \lie{a}^{\ast}$.

Firstly, consider the case $r=2n$, $n \geq 2$. 
There are two conjugacy classes of Cartan subgroups in $G$,
one is compact and the other is maximally split.  
Let $\lie{h}_{c}$ be the complexified Cartan subalgebra spanned by 
$\I A_{2i,2i-1}$, $i=1,\dots,n$, 
and let $H_{c}$ be the corresponding compact Cartan subgroup.  
Define a basis  $\{\epsilon_{i}\,|\, i=1,\dots,n\}$ of $\lie{h}_{c}^{\ast}$ 
by $\epsilon_{i}(\I A_{2j,2j-1}) = \delta_{ij}$ 
(Kronecker's delta). 
Choose a maximally split Cartan subgroup $H_{s}:=(H_{c}\cap M) A$. 
The complexified Lie algebra $\lie{h}_{s}$ of it is the linear span of 
$\I A_{2i, 2i-1}$ ($i=1,\dots,n-1$) and $h = \I A_{2n+1,2n}$, 
so $\epsilon_{i}$, $i=1,\dots,n-1$ and $f$ form a basis of
$\lie{h}_{s}^{\ast}$.

Consider the irreducible Harish-Chandra modules with the
regular integral infinitesimal character $\Lambda$, 
which is conjugate to 
\begin{equation}\label{eq:Lambda,2n}
\sum_{p=1}^{n} \Lambda_{p} \epsilon_{p} \in \lie{h}_{c}^{\ast}, 
\quad 
\Lambda_{p} \in \frac{1}{2} \Z, 
\quad 
\Lambda_{p} - \Lambda_{p+1} \in \Z_{>0} 
\mbox{ for } p=1, \dots, n-1, 
\mbox{ and } \Lambda_{n} > 0.  
\end{equation}
There are two inequivalent discrete series representations
$\pi_{i}$, $i=0, 1$, whose Harish-Chandra parameters are 
\begin{align*}
&
\sum_{p=1}^{n} \Lambda_{p} \epsilon_{p}, 
&
&
\sum_{p=1}^{n-1} \Lambda_{p} \epsilon_{p} - \Lambda_{n} \epsilon_{n}, 
\end{align*}
respectively. 

Since $W_{\lie{g}} \simeq S_{n} \ltimes \Z_{2}^{n}$ and 
$W(G,H_{s}) \simeq S_{n-1} \ltimes \Z_{2}^{n}$, 
and since $H_{s}$ is connected, 
there are $n$ equivalence classes of non-tempered irreducible
representations of $Spin(2n,1)$. 
For $i=1,\dots,n$, 
define $\mu_{0,i} \in (\lie{h}_{s} \cap \lie{m})^{\ast}$ and 
$\nu_{0,i} \in \lie{a}^{\ast}$ by 
\begin{align}\label{eq:M-pseudocharacter,2n}
\mu_{0,i} := 
& \sum_{p=1}^{i-1} \Lambda_{p} \epsilon_{p} 
+ \sum_{p=i}^{n-1} \Lambda_{p+1} \epsilon_{p} 
- \rhom, 
&
\nu_{0,i} :=& 
\Lambda_{i} f, 
\end{align}
where 
$\rhom :=\frac{1}{2} \sum_{p=1}^{n-1} (2n-1-2p) \epsilon_{p}$. 
Let $\delta_{0,i}$ be the irreducible representation of $M$ with
the highest weight $\mu_{0,i}$, 
and let 
$\pi_{0,i} := X_{P}(\delta_{0,i}, \nu_{0,i})$. 
Then $\pi_{0,i}$ has the unique irreducible quotient, which we denote
by $\overline{\pi}_{0,i}$. 

For an irreducible $\brgK$-module $\pi$, 
let $\ell(\pi)$ be the length of $\pi$ defined in 
\cite[Definition~8.1.4]{V2}. 
The classification of irreducible $\brgK$-modules of $Spin(2n,1)$ is
as follows (see \cite{C}, for example). 

\begin{theorem}
\label{theorem:classification of irr modules, 2n}
The irreducible Harish-Chandra modules of $Spin(2n,1)$ with the regular
integral infinitesimal character $\Lambda$ are parametrized 
by the set 
\[
\{\pi_{0}, \pi_{1}\} \cup \{\overline{\pi}_{0,i}\,|\, i=1,\dots,n\}. 
\]
The lengths of $\pi_{0}$, $\pi_{1}$ and $\overline{\pi}_{0,i}$, 
$i=1,\dots,n$, are $0$, $0$ and $n-i+1$, respectively. 
\end{theorem}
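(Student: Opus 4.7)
The plan is to invoke the Langlands classification (in the form available through \cite{V2} or as laid out in \cite{C}) and simply organize the list of Langlands parameters carrying the regular integral infinitesimal character $\Lambda$.

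First, I would enumerate the conjugacy classes of Cartan subgroups of $G = Spin(2n,1)$: as recalled above there are exactly two, the compact $H_{c}$ and the maximally split $H_{s}$. Every irreducible Harish-Chandra module with infinitesimal character $\Lambda$ is attached, via the Langlands classification, to one of these (discrete series/tempered for $H_{c}$, non-tempered Langlands quotient of a principal series for $H_{s}$).

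Second, for the compact Cartan: discrete series with infinitesimal character $\Lambda$ are in bijection with the $W_{K}$-orbits of Harish-Chandra parameters inside the $W_{\lie{g}}$-orbit of $\Lambda$. Since $\lie{g}_{\C} \simeq \lie{so}(2n+1,\C)$ is of type $B_{n}$ with $|W_{\lie{g}}| = 2^{n} n!$ while $\lie{k}_{\C} \simeq \lie{so}(2n,\C)$ is of type $D_{n}$ with $|W_{K}| = 2^{n-1} n!$, the quotient has order $2$, yielding exactly the two discrete series $\pi_{0}, \pi_{1}$ with the Harish-Chandra parameters stated. These are tempered, hence $\ell(\pi_{0}) = \ell(\pi_{1}) = 0$ by definition of the length function.

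Third, for the maximally split Cartan: since $H_{s}$ is connected, each principal series parameter with our $\Lambda$ is determined by choosing, up to $W(G,H_{s}) \simeq S_{n-1} \ltimes \Z_{2}^{n}$, which coordinate of $\Lambda$ (normalized into the positive Weyl chamber) plays the role of the split character $\nu$. Comparing $|W_{\lie{g}}|/|W(G,H_{s})| = n$ shows there are exactly $n$ such orbits, giving the standard modules $\pi_{0,i} = X_{P}(\delta_{0,i}, \nu_{0,i})$ for $i = 1, \dots, n$; each has a unique irreducible quotient $\overline{\pi}_{0,i}$, and together with $\pi_{0}, \pi_{1}$ these exhaust the list.

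The remaining task, and the main obstacle, is the length formula $\ell(\overline{\pi}_{0,i}) = n - i + 1$. The plan is to read it off Vogan's definition using the $\tau$-invariant together with the explicit composition series of real-rank-one principal series already tabulated in \cite{C}: the reducibility walls attached to $\nu_{0,i} = \Lambda_{i} f$ successively contribute one to the length for each index $j \geq i$ at which a wall-crossing produces a proper submodule, and the case-by-case check for $Spin(2n,1)$ gives precisely $n - i + 1$ such walls. This identification of $\ell$ with the counting of relevant reducibility walls is the only non-routine step; the rest is bookkeeping of Langlands parameters.
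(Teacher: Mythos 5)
Your plan is sound, and since the paper offers no proof at all here (it simply cites Collingwood's book \cite{C}), your Langlands-classification argument is exactly the standard route one would take to fill in that citation. The enumeration is correct: two discrete series from the compact Cartan (the index $|W_{\lie{g}}|/|W(K,H_{c})| = 2$), and $n$ principal-series parameters from the split Cartan (the index $|W_{\lie{g}}|/|W(G,H_{s})| = n$), giving the $n+2$ irreducibles.

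The one place you could tighten things is the length computation, which you flag as the obstacle. Rather than arguing via "reducibility walls" — which requires a further argument identifying the number of walls with Vogan's $\ell$ — it is cleaner and essentially mechanical to compute $\ell(\overline{\pi}_{0,i})$ directly from \cite[Definition~8.1.4]{V2}: the length of a Langlands parameter on a $\theta$-stable Cartan $H_{s}$ is (half) the number of positive complex roots $\alpha$ with $\theta\alpha < 0$, plus a contribution from real roots satisfying the parity condition. For $Spin(2n,1)$ the complex roots are $\pm\epsilon_{p}\pm f$ and the unique real root is $\pm f$; plugging in $\bar\gamma = \mu_{0,i}+\rho_{\lie{m}}+\nu_{0,i}$ from \eqref{eq:M-pseudocharacter,2n}, one counts exactly $2(n-i)$ positive complex roots sent negative by $\theta$, plus $1$ from the real root $f$, giving $\ell(\overline{\pi}_{0,i}) = (n-i)+1$. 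Also, a minor caveat: "tempered hence $\ell=0$" is slightly imprecise — length $0$ for $\pi_{0},\pi_{1}$ follows because their Langlands parameter lives on the compact Cartan, where there are no complex or real roots, so the combinatorial count in Definition~8.1.4 is empty. With these two adjustments your proof is complete and self-contained, which is arguably an improvement over the paper's bare citation.
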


In order to state the composition series, we use diagrammatic
expression. 
\begin{definition}\label{definition:diagram of composition series} 
Suppose $A_{1}, A_{2}$ are distinct composition factors of a 
$\brgK$-module $V$. 
If there exist elements $\{v_{i}\} \subset A_{1}$ and 
$\{X_{i}\} \subset \lie{g}$ such that $\sum_{i} X_{i} v_{i}$ is
non-zero and contained in $A_{2}$, 
then we connect $A_{1}$ and $A_{2}$ by an arrow 
$A_{1} \rightarrow A_{2}$. 
\end{definition}

\begin{theorem}[\cite{C}]\label{composition series of PS,2n} 
The socle filtrations of $\pi_{0,i}$ are 
\begin{align*}
&
\pi_{0,i} 
\quad \simeq \quad 
\begin{xy}
(0,3.5)*{\overline{\pi}_{0,i}}="A_{1}",
(0,-3.5)*{\overline{\pi}_{0,i+1}}="A_{2}",
\ar "A_{1}";"A_{2}"
\end{xy} 
\quad 
\mbox{if}
\quad 
i=1,\dots,n-1, 
\quad 
\mbox{and}
&
&
\pi_{0,n} 
\quad \simeq \quad 
\begin{xy}
(0,3.5)*{\overline{\pi}_{0,i}}="A_{1}",
(-6,-3.5)*{\pi_{0}}="A_{21}",
(6,-3.5)*{\pi_{1}}="A_{22}",
\ar "A_{1}";"A_{21}"
\ar "A_{1}";"A_{22}"
\end{xy}. 
\end{align*}
\end{theorem}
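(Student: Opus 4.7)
The plan is to combine the Langlands classification, Frobenius reciprocity for $K$-types, and a subrepresentation argument for the discrete series composition factors.

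First, since $\nu_{0,i} = \Lambda_{i} f$ lies in the strictly positive chamber of $\lie{a}^{\ast}$, the Langlands classification identifies $\overline{\pi}_{0,i}$ as the unique irreducible quotient of $\pi_{0,i} = X_{P}(\delta_{0,i}, \nu_{0,i})$. Hence $\overline{\pi}_{0,i}$ occupies the top layer of the socle filtration, giving the upper node in both diagrams. The remaining task is to identify the layers below it.

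Next, I would determine the full set of composition factors. By Frobenius reciprocity,
\[
\widehat{K}(\pi_{0,i}) = \{\tau \in \widehat{K} \,|\, \delta_{0,i} \subset \tau|_{M}\},
\]
which admits an explicit interlacing description via the branching rule from $Spin(2n)$ to $Spin(2n-1)$. The $K$-spectra of the candidate irreducible factors, namely the $\overline{\pi}_{0,j}$ (available inductively from Theorem~\ref{theorem:classification of irr modules, 2n}) and the discrete series $\pi_{0}, \pi_{1}$ (via Blattner's formula), are likewise explicit. Matching $K$-spectra (or equivalently, applying the virtual-character identity obtained by inverting Zuckerman's formula for standard modules) pins down the factors: for $i < n$ the only additional factor is $\overline{\pi}_{0,i+1}$, while for $i = n$ one picks up both $\pi_{0}$ and $\pi_{1}$.

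For $i < n$, $\overline{\pi}_{0,i+1}$ is forced to be the socle, completing the two-layer picture. For $i = n$, the kernel of the projection $\pi_{0,n} \twoheadrightarrow \overline{\pi}_{0,n}$ has length two with factors $\pi_{0}$ and $\pi_{1}$, and the remaining point is to show it is semisimple. Applying the Casselman subrepresentation theorem and comparing Harish-Chandra parameters, both $\pi_{0}$ and $\pi_{1}$ embed as submodules of $\pi_{0,n}$ itself. Two distinct irreducible submodules of a length-three module with simple head must intersect trivially, so their sum is direct and exhausts the socle.

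The main obstacle is this last step in the case $i = n$: ruling out an indecomposable extension between $\pi_{0}$ and $\pi_{1}$. The cleanest route is the double-embedding argument above; alternatives include a direct verification that $\Ext^{1}_{\brgK}(\pi_{0}, \pi_{1}) = 0$ via Zuckerman cohomological methods, or an explicit analysis of the kernel of the long intertwining operator $\pi_{0,n} \to \pi_{0,n}^{w}$ where $w$ is the non-trivial element of $W(G,H_{s}) \backslash W_{\lie{g}}$, both of which confirm the sum decomposition $\mathrm{soc}(\pi_{0,n}) = \pi_{0} \oplus \pi_{1}$.
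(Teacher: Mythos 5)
Note first that the paper does not prove this theorem: it is quoted with attribution to Collingwood's book \cite{C}, so there is no internal proof to compare against. Your write-up is therefore supplying an argument for a cited result rather than recovering the paper's own reasoning.

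Taken on its own, the outline is largely reasonable, but two steps need repair. First, identifying the composition factors by matching $K$-spectra is circular in the paper's framework: the paper's Theorem~\ref{theorem:K-spectra,2n} is itself derived \emph{from} the composition series in the theorem you are proving (the discrete series $K$-spectra come independently from Blattner, but the $K$-spectra of the $\overline{\pi}_{0,j}$ do not). Your parenthetical alternative --- inverting the standard-to-irreducible character matrix (Zuckerman/Kazhdan--Lusztig in the rank-one setting) --- is the right fix, and should be the primary argument rather than a fallback. Second, the double-embedding step for $i=n$ is under-justified: Casselman's subrepresentation theorem guarantees that each discrete series embeds in \emph{some} minimal-parabolic principal series, but pinning down $\pi_{0,n}$ specifically requires the leading-exponent / asymptotics analysis (or the classification of tempered embeddings for rank-one groups), not just ``comparing Harish-Chandra parameters.'' A cleaner and self-contained way to get semisimplicity of the kernel is the one you list as an alternative: by Vogan's Ext-parity theorem (Theorem~\ref{theorem:Ext}), $\ell(\pi_{0})=\ell(\pi_{1})=0$ have the same parity, so $\Ext^{1}_{\gK}(\pi_{0},\pi_{1})=0$ and the length-two kernel splits automatically. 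I would lead with that argument instead of the embedding argument, since it avoids the delicate question of where the discrete series embed.
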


The Blattner formula gives the $K$-spectra of the discrete series
representations $\pi_{0}, \pi_{1}$. 
Starting from the discrete series, we obtain the
$K$-spectrum of $\overline{\pi}_{0,i}$ inductively, by using
Theorem~\ref{composition series of PS,2n}. 
To state the theorem, let $\Lambda_{0} := \infty$. 
%
\begin{theorem}
\label{theorem:K-spectra,2n} 
\begin{enumerate}
\item
The $K$-spectra of $\pi_{0}$ and $\pi_{1}$ are 
\begin{align}
\widehat{K}(\pi_{0}) 
= 
\{(\tau_{\lambda}, V_{\lambda}^{K}) \,|\, 
\,&  
\Lambda_{p-1} -n+p-\frac{1}{2}
\geq \lambda_{p} \geq
\Lambda_{p}-n+p+\frac{1}{2},
\enskip
(1 \leq p \leq n)\}, 
\label{eq:K-spectrum of pi_0}\\
\widehat{K}(\pi_{1}) 
= 
\{(\tau_{\lambda}, V_{\lambda}^{K}) \,|\, 
\,&  
\Lambda_{p-1} -n+p-\frac{1}{2}
\geq \lambda_{p} \geq
\Lambda_{p}-n+p+\frac{1}{2},
\enskip
(1 \leq p \leq n-1); 
\notag\\
& 
-\Lambda_{n} -\frac{1}{2}
\geq \lambda_{n} \geq
-\Lambda_{n-1}+\frac{1}{2}
\}. 
\label{eq:K-spectrum of pi_1}
\end{align}
\item
For $i=1,\dots,n$, the $K$-spectrum of $\overline{\pi}_{0,i}$ is 
\begin{align}
\widehat{K}(\overline{\pi}_{0,i}) 
= 
\{(\tau_{\lambda}, V_{\lambda}^{K}) \,|\, 
\, & 
\Lambda_{p-1} -n+p-\frac{1}{2} 
\geq \lambda_{p} \geq
\Lambda_{p}-n+p+\frac{1}{2}, 
\enskip
(1 \leq p \leq i-1); 
\notag\\
& 
\Lambda_{p} -n+p-\frac{1}{2}
\geq \lambda_{p} \geq
\Lambda_{p+1}-n+p+\frac{1}{2},
\enskip 
(i \leq p \leq n-1); 
\label{eq:K-spectrum of pi_0i,2n}\\
& 
\Lambda_{n} -\frac{1}{2} 
\geq |\lambda_{n}| 
\}. 
\notag\end{align}
\end{enumerate}
In each case, every $K$-type occurs in $\overline{\pi}_{0,i}$ with
multiplicity one. 
\end{theorem}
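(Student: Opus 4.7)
The plan is to compute the three families of $K$-spectra in stages, combining Blattner's formula for the discrete series $\pi_{0}$ and $\pi_{1}$ with Frobenius reciprocity for the principal series $\pi_{0,i}$, and then exploiting the composition series from Theorem~\ref{composition series of PS,2n} to extract the irreducible quotients $\overline{\pi}_{0,i}$.

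For part~(1), I would apply Blattner's formula to $\pi_{0}$ and $\pi_{1}$. With respect to the compact Cartan $\lie{h}_{c}$, the compact roots of $\lie{g}=\lie{so}(2n+1,\C)$ are $\{\pm\epsilon_{i}\pm\epsilon_{j} : i\ne j\}$ and the non-compact roots are $\{\pm\epsilon_{i}\}$. Because the set of non-compact positive roots $\{\epsilon_{1},\dots,\epsilon_{n}\}$ is linearly independent, the partition function entering the Blattner sum takes only the values $0$ and $1$, and the formula collapses drastically. A standard Weyl-denominator manipulation then transforms the collapsed sum into the interlacing inequalities $\Lambda_{p-1}-n+p-\tfrac{1}{2}\ge\lambda_{p}\ge\Lambda_{p}-n+p+\tfrac{1}{2}$ claimed in \eqref{eq:K-spectrum of pi_0}. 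The spectrum of $\pi_{1}$ follows by replacing $\Lambda_{n}$ with $-\Lambda_{n}$ in the Harish-Chandra parameter, which modifies only the last inequality as in \eqref{eq:K-spectrum of pi_1}. Multiplicity one is immediate since at every $\lambda$ in the region, the collapsed Blattner sum has exactly one nonzero term.

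For part~(2), I would first determine $\widehat{K}(\pi_{0,i})$ via Frobenius reciprocity:
\begin{equation*}
[V_{\lambda}^{K} : X_{P}(\delta_{0,i},\nu_{0,i})]
=
[V_{\lambda}^{K}|_{M} : \delta_{0,i}].
\end{equation*}
Since $K=Spin(2n)$ and $M=Spin(2n-1)$, the classical multiplicity-free branching rule gives $V_{\lambda}^{K}|_{M}=\bigoplus_{\mu} V_{\mu}^{M}$, where $\mu$ ranges over dominant weights satisfying the interlacing $\lambda_{1}\ge\mu_{1}\ge\lambda_{2}\ge\dots\ge\mu_{n-1}\ge|\lambda_{n}|$. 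Substituting $\mu=\mu_{0,i}$ from \eqref{eq:M-pseudocharacter,2n} and unpacking $\rhom$ yields an explicit interlacing description of $\widehat{K}(\pi_{0,i})$, with multiplicity one throughout. Next, I would argue by downward induction on $i$. For $i=n$, Theorem~\ref{composition series of PS,2n} gives three composition factors $\overline{\pi}_{0,n}$, $\pi_{0}$, $\pi_{1}$; subtracting $\widehat{K}(\pi_{0})$ and $\widehat{K}(\pi_{1})$ (from part~(1)) from $\widehat{K}(\pi_{0,n})$ produces $\widehat{K}(\overline{\pi}_{0,n})$. For $i<n$, the exact sequence $0\to\overline{\pi}_{0,i}\to\pi_{0,i}\to\overline{\pi}_{0,i+1}\to 0$ together with multiplicity one gives $\widehat{K}(\overline{\pi}_{0,i})=\widehat{K}(\pi_{0,i})\setminus\widehat{K}(\overline{\pi}_{0,i+1})$, and a direct check shows this difference equals \eqref{eq:K-spectrum of pi_0i,2n}.

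The main obstacle will be the combinatorial bookkeeping in the subtraction step: one has to verify that the difference of two nested interlacing polytopes is again an interlacing polytope, with the correct index shift from the $M$-side description indexed by $\mu_{0,i}$ to the claimed $K$-side description for $\overline{\pi}_{0,i}$. The discrete-series input (1) is combinatorially the cleanest piece, while multiplicity one is automatic from the branching rule and propagates through the induction.
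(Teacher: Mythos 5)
Your proposal follows exactly the route the paper takes (and sketches in the sentence preceding the theorem): Blattner's formula for the discrete series $\pi_0,\pi_1$, which collapses because the positive noncompact roots $\{\epsilon_1,\dots,\epsilon_n\}$ of $\lie{so}(2n+1,\C)$ are linearly independent; Frobenius reciprocity and the multiplicity-free $Spin(2n)\downarrow Spin(2n-1)$ branching rule to get $\widehat{K}(\pi_{0,i})$; then downward induction using the composition series of Theorem~\ref{composition series of PS,2n}, with multiplicity one inherited from the principal series. One small slip: the exact sequence should read $0\to\overline{\pi}_{0,i+1}\to\pi_{0,i}\to\overline{\pi}_{0,i}\to 0$ (the paper defines $\overline{\pi}_{0,i}$ as the unique irreducible \emph{quotient} of $\pi_{0,i}$), but this has no effect on the set-difference computation of $K$-spectra.
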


Secondly, consider the case $r=2n+1$, $n \geq 1$. 
There are only one conjugacy class of Cartan subgroups in $G$. 
Let $\lie{h}_{s}$ be the complexified Cartan subalgebra spanned by 
$\I A_{2i,2i-1}$, $i=1,\dots,n$ and $h=\I A_{2n+2,2n+1}$, 
and let $H_{s}$ be the corresponding Cartan subgroup. 
Define $\{\epsilon_{i}\,|\, i=1,\dots,n\}$ as in the $r=2n$ case.

Consider the irreducible Harish-Chandra modules with the
regular integral infinitesimal character $\Lambda$, 
which is conjugate to 
\begin{align}
\sum_{p=1}^{n} &\Lambda_{p} \epsilon_{p} 
+ \Lambda_{n+1} f \in \lie{h}_{s}^{\ast}, 
\quad 
\Lambda_{p} \in \frac{1}{2} \Z, 
\label{eq:Lambda,2n+1}\\
&\mbox{with} \quad 
\Lambda_{p} - \Lambda_{p+1} \in \Z_{>0} 
\mbox{ for } p=1, \dots, n, 
\mbox{ and } \Lambda_{n}+\Lambda_{n+1} \in \Z_{>0}.  
\notag
\end{align}
Since $W_{\lie{g}} \simeq S_{n+1} \ltimes \Z_{2}^{n}$ and 
$W(G,H_{s}) \simeq S_{n} \ltimes \Z_{2}^{n}$, 
and since $H_{s}$ is connected, 
there are $(n+1)$ equivalence classes of irreducible representations
of $Spin(2n+1,1)$. 
For $i=1,\dots,n+1$, 
define $\mu_{0,i} \in (\lie{h}_{s} \cap \lie{m})^{\ast}$ and 
$\nu_{0,i} \in \lie{a}^{\ast}$ by 
\begin{align}\label{eq:M-pseudocharacter}
&
\mu_{0,i} := 
\sum_{p=1}^{i-1} \Lambda_{p} \epsilon_{p} 
+ \sum_{p=i}^{n} \Lambda_{p+1} \epsilon_{p} 
- \rhom, 
&
\nu_{0,i} :=& 
\Lambda_{i} f, 
\end{align}
where 
$\rhom :=\sum_{p=1}^{n-1} (n-p) \epsilon_{p}$. 
Let $\delta_{0,i}$ be the irreducible representation of $M$ with
the highest weight $\mu_{0,i}$, 
and let 
$\pi_{0,i} := X_{P}(\delta_{0,i}, \nu_{0,i})$. 
Then $\pi_{0,i}$ has the unique irreducible quotient, which we denote
by $\overline{\pi}_{0,i}$. 
\begin{theorem}
\label{composition series of PS,2n+1} 
The irreducible Harish-Chandra modules of $Spin(2n+1,1)$ with the
regular integral infinitesimal character $\Lambda$ are parametrized 
by the set 
\[
\{\overline{\pi}_{0,i}\,|\, i=1,\dots,n+1\}. 
\]
The lengths of $\overline{\pi}_{0,i}$, $i=1,2,\dots,n+1$, are $n-i+1$,
respectively. 

The socle filtrations of $\pi_{0,i}$ are 
\begin{align*}
&
\pi_{0,i} 
\quad \simeq \quad 
\begin{xy}
(0,3.5)*{\overline{\pi}_{0,i}}="A_{1}",
(0,-3.5)*{\overline{\pi}_{0,i+1}}="A_{2}",
\ar "A_{1}";"A_{2}"
\end{xy} 
\quad 
\mbox{if}
\quad 
i=1,\dots,n, 
\quad 
\mbox{and}
&
&
\pi_{0,n+1} 
= 
\overline{\pi}_{0,n+1}
\quad
\mbox{is irreducible.} 
\end{align*}
\end{theorem}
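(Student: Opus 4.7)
The plan is to establish all three assertions in tandem, using the Langlands classification together with the rank-one structure of $G=Spin(2n+1,1)$, with the composition series of $\pi_{0,i}$ proved by descending induction on $i$.

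For (1), since $H_{s}$ is the unique Cartan subgroup of $G$ up to conjugacy and since $G$ has no discrete series (the absolute rank $n+1$ of $G$ differs from the rank $n$ of $K$), every irreducible $\brgK$-module with infinitesimal character $\Lambda$ is the Langlands quotient $\overline{\pi}_{0,i}$ of one of the principal series $\pi_{0,i}=X_{P}(\delta_{0,i},\nu_{0,i})$. The number of such quotients equals $|W_{\lie{g}}/W(G,H_{s})|=n+1$, and their pairwise inequivalence is forced by uniqueness of Langlands data. For $i=n+1$ the continuous parameter $\nu_{0,n+1}=\Lambda_{n+1}f$ is strictly tempered because $|\Lambda_{n+1}|<\Lambda_{n}$, so the Knapp--Stein irreducibility theorem for minimal principal series yields $\pi_{0,n+1}=\overline{\pi}_{0,n+1}$ irreducible; this is the base of the induction.

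For $i\le n$, I would compute $\widehat{K}(\pi_{0,i})$ by Frobenius reciprocity from $\delta_{0,i}$ together with the multiplicity-free $Spin(2n)\hookrightarrow Spin(2n+1)$ branching rule, and verify that the resulting set decomposes as a disjoint union $S_{i}\sqcup\widehat{K}(\overline{\pi}_{0,i+1})$, where $\widehat{K}(\overline{\pi}_{0,i+1})$ is known by the inductive hypothesis and $S_{i}$ matches the $K$-spectrum of $\overline{\pi}_{0,i}$ predicted by the odd-rank analogue of~\eqref{eq:K-spectrum of pi_0i,2n}. Since (1) enumerates all irreducibles with infinitesimal character $\Lambda$, and their $K$-spectra are pairwise distinct by inspection, the unique composition factor of $\pi_{0,i}$ besides its Langlands quotient $\overline{\pi}_{0,i}$ must be $\overline{\pi}_{0,i+1}$. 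This forces the length of $\pi_{0,i}$ to equal $2$ and the socle filtration to be
\[
\begin{xy}
(0,3.5)*{\overline{\pi}_{0,i}}="A_{1}",
(0,-3.5)*{\overline{\pi}_{0,i+1}}="A_{2}",
\ar "A_{1}";"A_{2}"
\end{xy}
\]
in the sense of Definition~\ref{definition:diagram of composition series}, and completes the induction step. The length formula $\ell(\overline{\pi}_{0,i})=n-i+1$ then follows by descending induction from $\ell(\overline{\pi}_{0,n+1})=0$.

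The main obstacle is the explicit $K$-spectrum matching: one must verify that the Frobenius $K$-spectrum of $\pi_{0,i}$, after removing $\widehat{K}(\overline{\pi}_{0,i+1})$, gives precisely the range of highest weights predicted for $\overline{\pi}_{0,i}$. This is a combinatorial exercise in iterated branching (from $M^{\eta}=Spin(2n-1)$ up to $M=Spin(2n)$ via $\delta_{0,i}$, then from $M$ up to $K=Spin(2n+1)$), carried out in~\cite{C}. Once this matching is in place, all remaining assertions follow formally from the multiplicity-one property of $K$-types for $Spin(r,1)$ and the rigidity of Langlands parameters.
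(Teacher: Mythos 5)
The paper offers no proof of this statement: it is quoted from Collingwood's book \cite{C}, and the paragraph preceding it merely records the Weyl-group count $|W_{\lie{g}}/W(G,H_{s})|=n+1$ of Langlands parameters. Your attempt to reconstruct the proof is in the spirit of Collingwood's rank-one techniques (absence of discrete series from the rank criterion, Langlands classification, $K$-spectrum matching via Frobenius reciprocity and the multiplicity-free branching $Spin(2n)\hookrightarrow Spin(2n+1)$), but it contains a genuine error in the base case and a circularity in the inductive step.

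The base case is incorrectly justified. You claim $\pi_{0,n+1}$ is irreducible because the parameter $\nu_{0,n+1}=\Lambda_{n+1}f$ is ``strictly tempered,'' citing Knapp--Stein. Temperedness requires $\Re\nu=0$, whereas $\Lambda_{n+1}$ is a real half-integer which is generically nonzero (the hypothesis is only $\Lambda_n>|\Lambda_{n+1}|$, not $\Lambda_{n+1}=0$). Thus $\pi_{0,n+1}$ is in general a non-unitary principal series, and Knapp--Stein $R$-group theory for tempered representations does not apply. The correct justification is either a non-vanishing statement for the Plancherel $\mu$-function of the long intertwining operator at $\nu=\Lambda_{n+1}f$ (explicit Gamma-function analysis in the rank-one case), a Speh--Vogan reducibility argument, or a direct $K$-type count after the classification is in hand. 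Since the whole descending induction rests on this base case, this is the most serious defect.

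The inductive step has a circularity. You invoke the ``odd-rank analogue of \eqref{eq:K-spectrum of pi_0i,2n}'' as a prediction for $\widehat{K}(\overline{\pi}_{0,i})$, but in the paper the $K$-spectrum formula \eqref{eq:K-spectrum of pi_0i,2n+1} is \emph{derived from} the present theorem (see the sentence preceding Theorem~\ref{theorem:K-spectra,2n+1}). Likewise the phrase ``the unique composition factor of $\pi_{0,i}$ besides $\overline{\pi}_{0,i}$'' presupposes that the composition length is $2$, which is exactly what is being proved. To make the induction rigorous you should first observe, directly from the branching data of $\delta_{0,i}$ and $\delta_{0,j}$, that $\widehat{K}(\pi_{0,i})\cap\widehat{K}(\pi_{0,j})=\emptyset$ whenever $|i-j|\ge 2$; this confines the constituents of the kernel of $\pi_{0,i}\twoheadrightarrow\overline{\pi}_{0,i}$ to $\{\overline{\pi}_{0,i-1},\overline{\pi}_{0,i},\overline{\pi}_{0,i+1}\}$ without any prior knowledge of $\widehat{K}(\overline{\pi}_{0,i})$, after which Langlands multiplicity one and the inductive knowledge of $\widehat{K}(\overline{\pi}_{0,i+1})$ pin down the kernel as $\overline{\pi}_{0,i+1}$. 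Finally, the assertion that the Vogan length $\ell(\overline{\pi}_{0,i})=n-i+1$ ``follows by descending induction from the socle filtration'' is unsupported: the length of \cite[Definition~8.1.4]{V2} is an invariant of the Langlands parameter and requires its own short computation, not a deduction from the composition series of $\pi_{0,i}$.
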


Starting from $\pi_{0,n+1} = \overline{\pi}_{0,n+1}$, 
we obtain the
$K$-spectrum of $\overline{\pi}_{0,i}$ inductively, by using
Theorem~\ref{composition series of PS,2n+1}. 
As before, let $\Lambda_{0} := \infty$. 
\begin{theorem}
\label{theorem:K-spectra,2n+1} 
For $i=1,\dots,n+1$, the $K$-spectrum of $\overline{\pi}_{0,i}$ is 
\begin{align}
\widehat{K}(\overline{\pi}_{0,i}) 
= 
\{(\tau_{\lambda}, V_{\lambda}^{K}) \,|\, 
\, & 
\Lambda_{p-1} -n+p-1
\geq \lambda_{p} \geq
\Lambda_{p}-n+p,
\enskip
(1 \leq p \leq i-1); 
\notag\\
& 
\Lambda_{p} -n+p-1
\geq \lambda_{p} \geq
\Lambda_{p+1}-n+p,
\enskip 
(i \leq p \leq n-1);
\notag\\
& 
\Lambda_{n} -1
\geq \lambda_{n} \geq
|\Lambda_{n+1}|, 
\enskip 
(\mbox{if } \enskip i<n+1)\}. 
\label{eq:K-spectrum of pi_0i,2n+1}
\end{align}
In each case, every $K$-type occurs in $\overline{\pi}_{0,i}$ with
multiplicity one. 
\end{theorem}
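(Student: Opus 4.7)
I would prove this by downward induction on $i$, following the recipe sketched in the paragraph preceding the theorem. For the base case $i=n+1$, the module $\overline{\pi}_{0,n+1}=\pi_{0,n+1}$ is irreducible by Theorem~\ref{composition series of PS,2n+1}, so Frobenius reciprocity gives
\[
\pi_{0,n+1}|_{K} \;\simeq\; \Ind_{M}^{K} V_{\mu_{0,n+1}}^{M},
\]
and hence $[V_{\lambda}^{K} : \pi_{0,n+1}] \;=\; [V_{\lambda}^{K}|_{M} : V_{\mu_{0,n+1}}^{M}]$. This last multiplicity is computed by the classical branching rule for $Spin(2n+1)\downarrow Spin(2n)$, which is $1$ or $0$ according as the interlacing chain $\lambda_{1}\geq\mu_{1}\geq\lambda_{2}\geq\dots\geq\lambda_{n}\geq|\mu_{n}|$ holds. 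Substituting $(\mu_{0,n+1})_{p}=\Lambda_{p}-(n-p)$ yields precisely the inequalities of \eqref{eq:K-spectrum of pi_0i,2n+1} for $i=n+1$, with $\Lambda_{0}=\infty$ absorbing the $p=1$ upper bound and multiplicity-freeness inherited from the branching rule.

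For the induction step with $i\leq n$, the short exact sequence
\[
0 \longrightarrow \overline{\pi}_{0,i} \longrightarrow \pi_{0,i} \longrightarrow \overline{\pi}_{0,i+1} \longrightarrow 0
\]
from Theorem~\ref{composition series of PS,2n+1} reduces the computation to
\[
\widehat{K}(\overline{\pi}_{0,i}) \;=\; \widehat{K}(\pi_{0,i})\setminus \widehat{K}(\overline{\pi}_{0,i+1}).
\]
Again by Frobenius reciprocity and the same branching rule applied to $\mu_{0,i}$ (whose coordinates are $\Lambda_{p}-(n-p)$ for $p\leq i-1$ and $\Lambda_{p+1}-(n-p)$ for $p\geq i$), I would read off $\widehat{K}(\pi_{0,i})$ as a multiplicity-free set of interlacing inequalities, and then subtract the set supplied by the induction hypothesis.

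The main obstacle is purely combinatorial: one has to verify that the set-theoretic difference matches \eqref{eq:K-spectrum of pi_0i,2n+1} on the nose. The descriptions of $\widehat{K}(\pi_{0,i})$ and $\widehat{K}(\overline{\pi}_{0,i+1})$ agree in all coordinates except one (the $i$-th coordinate, or $\lambda_{n}$ when $i=n$), where the swap $\Lambda_{i}\leftrightarrow\Lambda_{i+1}$ in the interlacing weight takes effect; careful bookkeeping is needed to confirm that removing $\widehat{K}(\overline{\pi}_{0,i+1})$ tightens the bound on that coordinate in exactly the predicted way and that the separate constraint $\Lambda_{n}-1\geq\lambda_{n}\geq|\Lambda_{n+1}|$ is produced when $i=n$. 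Finally, since $Spin(2n+1)\downarrow Spin(2n)$ is a multiplicity-free branching and the subtraction preserves this property, every $K$-type appears in $\overline{\pi}_{0,i}$ with multiplicity one.
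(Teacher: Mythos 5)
Your proposal is correct and follows exactly the strategy the paper indicates in the sentence preceding the theorem ("Starting from $\pi_{0,n+1} = \overline{\pi}_{0,n+1}$, we obtain the $K$-spectrum of $\overline{\pi}_{0,i}$ inductively, by using Theorem~\ref{composition series of PS,2n+1}"): compute $\widehat{K}(\pi_{0,i})$ from Frobenius reciprocity and the $Spin(2n+1)\downarrow Spin(2n)$ branching rule applied to $\mu_{0,i}$, then subtract $\widehat{K}(\overline{\pi}_{0,i+1})$ supplied by downward induction, using the multiplicity-freeness of the branching to justify the set-theoretic subtraction. The remaining step you defer (that the two sets of interlacing inequalities differ only in the $i$-th coordinate, yielding the stated bound) is indeed routine bookkeeping and checks out.
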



\section{Composition factors of $\stWhLsmg$}
\label{section:composition factors}

In this section we first determine the submodules of
$\stWhLsmg$. 
It is known by \cite{M1} that a $\brgK$-module $V$ can be a submodule
of $C^{\infty}(G/N;\eta)_{K}$ if and only if the Gelfand-Kirillov
dimension $\Dim V$ of it is equal to $\dim N$. 
For $G=Spin(r,1)$, 
$\Dim \overline{\pi}_{0,1} = 0$ and the Gelfand-Kirillov dimensions of
other irreducible modules are all $\dim N$. See \cite{C} for example. 
Therefore, an irreducible submodule of $\stWhLsmg$ is isomorphic to
one of $\pi_{0}, \pi_{1}$ or $\overline{\pi}_{0,i}$, 
$i=2,\dots,n+1$.

\subsection{Unique simple submodule} 
\label{subsection:Unique simple submodule}
By the discussion in \cite[\S4.2]{T2}, 
the following lemma holds. 
\begin{lemma}\label{lemma:condition for sub}
Let $(\pi, V)$ be an irreducible Harish-Chandra module with 
$\Dim V = \dim N$. 
Let $\{X_{P}(\delta_{p},\nu_{p})\, | \, p=1,\dots,k\}$ 
be the set of principal series representations which contain 
$(\pi,V)$ as a subquotient. 
If $(\pi,V)$ is a submodule of $\stWhLsmg$, 
then $\sigma$ is a submodule of $\delta_{p}|_{M^{\eta}}$ for every 
$p=1,\dots,k$. 

Conversely, for $\sigma \in \widehat{M^{\eta}}$, 
suppose that there exists a principal series 
$X_{P}(\delta,\nu)$ with infinitesimal character $\Lambda$ which
satisfies $\sigma \subset \delta|_{M^{\eta}}$. 
Then, $\stWhLsmg$ is non-zero. 
\end{lemma}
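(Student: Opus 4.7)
My plan is to translate the problem into one about spaces of Whittaker functionals $\Wh_\eta^{-\infty}(V)$ on Harish-Chandra modules, leaning on two standard inputs: Wallach's Jacquet-integral isomorphism $\Wh_\eta^{-\infty}(X_P(\delta,\nu)) \simeq (V_\delta^M)^*$, and Casselman's theorem that $V \mapsto \Wh_\eta^{-\infty}(V)$ is an exact, $M^\eta$-equivariant functor on the category of Harish-Chandra modules.

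For the forward direction, I first identify an embedding $(\pi, V) \hookrightarrow \stWhLsmg$ with a nonzero $M^\eta$-intertwiner $V_\sigma \to \Wh_\eta^{-\infty}(V)$ via evaluation at the identity; equivalently, $\Wh_\eta^{-\infty}(V)$ has a nonzero $\sigma^*$-isotypic component. For each $p$, I write $\pi = B_p/A_p$ with $A_p \subset B_p \subset X_P(\delta_p,\nu_p)$ and apply exactness of $\Wh_\eta^{-\infty}$ to the short exact sequences
\begin{equation*}
0 \to A_p \to B_p \to \pi \to 0
\quad\text{and}\quad
0 \to B_p \to X_P(\delta_p,\nu_p) \to X_P(\delta_p,\nu_p)/B_p \to 0.
\end{equation*}
Since every map is $M^\eta$-equivariant, the $\sigma^*$-isotypic component of $\Wh_\eta^{-\infty}(X_P(\delta_p,\nu_p))$ must also be nonzero. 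By Wallach's theorem this space is $M^\eta$-equivariantly isomorphic to $(V_{\delta_p}^M)^*$, so $\Hom_{M^\eta}(\sigma, \delta_p|_{M^\eta}) \neq 0$, i.e.\ $\sigma \subset \delta_p|_{M^\eta}$.

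For the converse, an embedding $\sigma \hookrightarrow \delta|_{M^\eta}$ produces by Wallach's parametrization a nonzero $\sigma^*$-isotypic element of $\Wh_\eta^{-\infty}(X_P(\delta,\nu))$. Interpreting this Whittaker functional through the Jacquet integral yields a nonzero $\brgK$-map from the underlying Harish-Chandra module of $X_P(\delta,\nu)$ into the space of moderate-growth, $(\sigma,\eta)$-equivariant Whittaker functions on $G$, namely $\stWhLsmg$; hence $\stWhLsmg \neq 0$.

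The one genuinely delicate point is the bookkeeping of the right $M^\eta$-action: I must check that the action on $\Wh_\eta^{-\infty}(X_P(\delta,\nu))$ coming from right translation on Whittaker functions coincides with the action inherited from $(V_\delta^M)^*$ through Wallach's isomorphism. This compatibility is exactly what is carried out in \cite[\S4.2]{T2}; once it is in hand, the two implications above are immediate.
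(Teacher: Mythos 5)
Your proposal is correct and follows the same route the paper invokes via \cite[\S4.2]{T2}: translate embeddings of $(\pi,V)$ into $\stWhLsmg$ into statements about the $\sigma^{\ast}$-isotypic part of $\Wh_{\eta}^{-\infty}(V)$, then use Casselman's exactness of $V \mapsto \Wh_{\eta}^{-\infty}(V)$ together with Wallach's $M^{\eta}$-equivariant isomorphism $\Wh_{\eta}^{-\infty}(X_{P}(\delta,\nu)) \simeq (V_{\delta}^{M})^{\ast}$ to pass between $(\pi,V)$ and any principal series containing it. You also correctly flag the one delicate point — matching the right $M^{\eta}$-action on Whittaker functions with the one on $(V_{\delta}^{M})^{\ast}$ — which is exactly the verification carried out in \cite{T2}.
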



By this lemma, we can determine the non-zero standard Whittaker
$\brgK$-modules and their subrepresentations. 
\begin{proposition}
\label{proposition:condition for sigma, integral,2n} 
Suppose $r=2n$ and the infinitesimal character $\Lambda$ is regular 
integral. 
Let $\gamma = (\gamma_{1},\dots, \gamma_{n-1})$ be the highest weight
of the irreducible representation $\sigma$ of 
$M^{\eta} \simeq Spin(2n-2)$. 
\begin{enumerate}
\item
The irreducible module  
$\pi_{0}$ (resp. $\pi_{1}$) is a submodule of $\stWhLsmg$ 
if and only if $\gamma$ 
satisfies \eqref{eq:condition for sigma,2n,0} 
(resp. \eqref{eq:condition for sigma,2n,1}). 
%
%
\item
The irreducible module  
$\overline{\pi}_{0,i}$, $i=2,\dots,n$, is a submodule of $\stWhLsmg$ 
if and only if $\gamma$ 
satisfies \eqref{eq:condition for sigma,2n,0i}. 
\end{enumerate}
Especially, $\stWhLsmg$ is non-zero 
if and only if the highest weight of
$\sigma$ satisfies one of the conditions 
\eqref{eq:condition for sigma,2n,0}, 
\eqref{eq:condition for sigma,2n,1} 
or \eqref{eq:condition for sigma,2n,0i} for some $i=2,\dots,n$. 
In these cases, $\pi_{0}$, $\pi_{1}$ or $\overline{\pi}_{0,i}$ is the
unique simple submodule of $\stWhLsmg$. 
\end{proposition}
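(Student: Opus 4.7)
The plan is to apply Lemma~\ref{lemma:condition for sub} systematically, combined with the classical $M \simeq Spin(2n-1) \supset M^{\eta} \simeq Spin(2n-2)$ branching rule. First I would read off from Theorem~\ref{composition series of PS,2n}, for each irreducible module $\pi$ with $\Dim \pi = \dim N$, the set of principal series containing $\pi$ as a subquotient: the discrete series $\pi_{0}$ and $\pi_{1}$ appear only in $\pi_{0,n}$; the non-tempered module $\overline{\pi}_{0,i}$ for $2 \leq i \leq n-1$ appears in both $\pi_{0,i-1}$ and $\pi_{0,i}$; and $\overline{\pi}_{0,n}$ appears in both $\pi_{0,n-1}$ and $\pi_{0,n}$. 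The necessary direction of Lemma~\ref{lemma:condition for sub} then requires $\sigma \subset \delta_{0,j}|_{M^{\eta}}$ for every $j$ in the corresponding list.

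Second, using the explicit coordinates $(\mu_{0,j})_{p} = \Lambda_{p}-n+p+1/2$ for $p<j$ and $(\mu_{0,j})_{p} = \Lambda_{p+1}-n+p+1/2$ for $p \geq j$, I would plug into the classical $Spin(2n-1) \supset Spin(2n-2)$ interlacing rule and intersect the conditions arising from the list above. A direct coordinate-by-coordinate check shows that, for $\overline{\pi}_{0,i}$ with $i \in \{2,\dots,n\}$, intersecting the $j=i-1$ and $j=i$ branchings produces exactly \eqref{eq:condition for sigma,2n,0i}. For $\pi_{0}$ and $\pi_{1}$ only the $j=n$ condition intervenes, and its last interlacing inequality $\Lambda_{n-1}-1/2 \geq |\gamma_{n-1}|$ splits disjointly into the three regions $\gamma_{n-1} \geq \Lambda_{n}+1/2$, $|\gamma_{n-1}| \leq \Lambda_{n}-1/2$, and $\gamma_{n-1} \leq -\Lambda_{n}-1/2$; together with the interlacing of the remaining coordinates, these give precisely \eqref{eq:condition for sigma,2n,0}, the $i=n$ case of \eqref{eq:condition for sigma,2n,0i}, and \eqref{eq:condition for sigma,2n,1}, respectively.

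Third, the converse direction of Lemma~\ref{lemma:condition for sub} guarantees $\stWhLsmg \neq 0$ whenever one of the three conditions holds, so a simple submodule exists and must satisfy the corresponding necessary branching condition. For $\overline{\pi}_{0,i}$ with $2 \leq i \leq n-1$, the intersection of the two branchings already singles out $\overline{\pi}_{0,i}$ as the unique candidate, making the identification immediate. The remaining and most delicate case is the $\pi_{0,n}$-regime, where $\pi_{0}$, $\pi_{1}$, and $\overline{\pi}_{0,n}$ all satisfy the same necessary condition and must be separated by a finer invariant. Here I would compute the dimension of the $\sigma$-isotypic part of the space of continuous $\eta$-Whittaker functionals on each of $\pi_{0}$, $\pi_{1}$, $\overline{\pi}_{0,n}$ by pushing Wallach's description of Whittaker functionals on $\pi_{0,n}$ through the composition series $\overline{\pi}_{0,n} \twoheadleftarrow \pi_{0,n} \hookleftarrow \pi_{0} \oplus \pi_{1}$, using that the branching of $\delta_{0,n}|_{M^{\eta}}$ is multiplicity-free and that exactly one of the three $\gamma_{n-1}$-ranges matches each subquotient. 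This yields a nonzero Whittaker functional on precisely one of the three subquotients for any given $\gamma$, singles out the claimed submodule, and forces uniqueness.

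The main obstacle is exactly this last step in the $\pi_{0,n}$-regime: Lemma~\ref{lemma:condition for sub} gives only a necessary condition for inclusion and an existence statement for non-vanishing, so it cannot by itself distinguish the three candidate socles. The argument therefore needs the additional input of a Whittaker-functional (or equivalently Jacquet-module) calculation on each subquotient, together with the observation that the three $\gamma_{n-1}$-ranges partition the relevant branching of $\delta_{0,n}|_{M^{\eta}}$, in order to match each subcase with the correct unique simple submodule.
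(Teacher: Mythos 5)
Your reduction to Lemma~\ref{lemma:condition for sub} combined with the $Spin(2n-1)\supset Spin(2n-2)$ interlacing rule is exactly what the paper does for part (2) and for the necessary direction of part (1), and your dimension-count strategy (exact Whittaker functor plus Wallach's theorem on $\pi_{0,n}$) mirrors the uniqueness argument the paper cites from \cite[Proposition~4.2]{T2}. However, your treatment of the crucial step in the $\pi_{0,n}$-regime has a genuine gap.

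You correctly observe that the three $\gamma_{n-1}$-ranges $\gamma_{n-1}\ge\Lambda_n+1/2$, $|\gamma_{n-1}|\le\Lambda_n-1/2$, and $\gamma_{n-1}\le-\Lambda_n-1/2$ partition the branching of $\delta_{0,n}|_{M^{\eta}}$, and that for each $\gamma$ in this branching the exactness of the Whittaker functor together with $\dim\Wh_{\eta}^{-\infty}(\pi_{0,n})=\dim\delta_{0,n}$ forces exactly one of $\pi_0,\pi_1,\overline{\pi}_{0,n}$ to contribute. This does establish that there is a well-defined bijection between the three ranges and the three subquotients and that the socle is unique. But the dimension count is blind to \emph{which} of $\pi_0$ or $\pi_1$ corresponds to the positive range and which to the negative: nothing in the branching rule, the composition series, or Wallach's isomorphism breaks the $\mathbb{Z}_2$-symmetry that swaps $\pi_0\leftrightarrow\pi_1$ and $\gamma_{n-1}\leftrightarrow-\gamma_{n-1}$. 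Indeed both signs of $\gamma_{n-1}$ occur in the restriction of the lowest $K$-type of $\pi_0$ (and of $\pi_1$) to $M^{\eta}$, so no $K$-type or abstract multiplicity argument can resolve the ambiguity. The paper explicitly flags this: it notes that this method ``does not tell us the signature condition for $\gamma_{n-1}$'' and defers the finish to Lemma~\ref{lemma:pi_0,pi_1 sub}, whose proof uses Yamashita's differential-equation characterization of discrete series embeddings and solves the resulting first-order ODE, where the sign of $\xi>0$ in the definition of $\eta$ finally determines whether the moderate-growth solution survives for $\gamma_{n-1}>0$ or $\gamma_{n-1}<0$. Your proposal never invokes the sign of $\xi$ or any ingredient capable of breaking the symmetry, so the claim that your calculation ``singles out the claimed submodule'' in the discrete-series cases is unjustified; you would need something like the shift-operator/ODE analysis of \S\ref{section:K-type shift operators} to complete that step.
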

\begin{proof}
We first show (2). 
By Theorem~\ref{composition series of PS,2n}, 
$\overline{\pi}_{0,i}$, $i=2,\dots,n$, is a composition factor of the
principal series $\pi_{0,k}$ if and only if 
$k = i$ or $i-1$. 
Therefore, if $\overline{\pi}_{0,i}$ is a submodule of
$\stWhLsmg$, then 
$\sigma 
\subset 
\delta_{0,i}|_{M^{\eta}}$ and 
$\sigma \subset \delta_{0,i-1}|_{M^{\eta}}$. 
Conversely, if $\sigma$ satisfies this condition, 
then $\stWhLsmg$ is non-zero, by Lemma~\ref{lemma:condition for sub}. 

Recall the branching rule for the restriction of an irreducible
representation of $Spin(2n-1)$ to $Spin(2n-2)$. 
For an irreducible representation $\delta_{\mu}$ of $Spin(2n-1)$ with the
highest weight $\mu=(\mu_{1},\dots,\mu_{n-1})$, 
the restriction $\delta_{\mu}|_{Spin(2n-2)}$ is a direct sum of
$\sigma' \in Spin(2n-2)\widehat{\ }$, whose highest
weight $\gamma = (\gamma_{1},\dots,\gamma_{n-1})$ satisfies 
\begin{align*}
& \mu_{p} \geq \gamma_{p} \geq \mu_{p+1}, 
\quad p=1,\dots,n-2, 
& 
& \mu_{n-1} \geq |\gamma_{n-1}|, 
& 
& \mu_{p} - \gamma_{p} \in \Z. 
\end{align*}
It follows that the restriction 
$\delta_{0,k} \in \widehat{M}$ 
to $M^{\eta}$ is a direct
sum of $\sigma' \in \widehat{M^{\eta}}$, 
whose highest weight 
$\gamma = (\gamma_{1},\dots,\gamma_{n-1})$ satisfies 
\[
\begin{cases}
\Lambda_{p}-n+p+\frac{1}{2} 
\geq \gamma_{p} \geq 
\Lambda_{p+1}-n+p+\frac{3}{2}, 
\quad 
p=1,\dots,k-2, 
\\
\Lambda_{k-1}-n+k-\frac{1}{2} 
\geq \gamma_{k-1} \geq 
\Lambda_{k+1}-n+k+\frac{1}{2}, 
\\
\Lambda_{p+1}-n+p+\frac{1}{2} 
\geq \gamma_{p} \geq 
\Lambda_{p+2}-n+p+\frac{3}{2}, 
\quad 
p=k,\dots,n-2, 
\\
\Lambda_{n}-\frac{1}{2} 
\geq |\gamma_{n-1}|. 
\end{cases}
\]
(
If
$k=n$, then the second and the third lines are omitted.) 
Therefore, when $2 \leq i \leq n$, 
$\sigma \in \widehat{M^{\eta}}$ satisfies 
$\sigma 
\subset 
\delta_{0,i}|_{M^{\eta}}$ and 
$\sigma \subset \delta_{0,i-1}|_{M^{\eta}}$ 
if and only if the highest weight $\gamma$ of $\sigma$ satisfies 
\eqref{eq:condition for sigma,2n,0i}. 
This proves the ``only if'' part of proposition. 

The proofs of the ``if'' part and the uniqueness of the socle of
$\stWhLsmg$ are the same as those of \cite[Proposition~4.2]{T2}, 
so we omit them here.

The proof of (1) is almost the same as that of (2). 
We can show that $\pi_{0}$ or $\pi_{1}$ is a submodule of $\stWhLsmg$
only if $\gamma$ satisfies \eqref{eq:condition for sigma,2n,0} or 
\eqref{eq:condition for sigma,2n,1}. 
But the method used here does not tell us the signature condition for
$\gamma_{n-1}$ in \eqref{eq:condition for sigma,2n,0} and 
\eqref{eq:condition for sigma,2n,1}. 
To complete the proof, 
we need to write explicitly the Whittaker functions
characterizing the submodule of $\stWhLsmg$. 
This will be done 
in \S\ref{section:K-type shift operators} 
(Lemma~\ref{lemma:pi_0,pi_1 sub}). 
\end{proof}

Just in the same way, we can determine the submodules of
$\stWhLsmg$ in the case $r=2n+1$. 
\begin{proposition}
\label{proposition:condition for sigma, integral,2n+1} 
Suppose $r=2n+1$ and the regular infinitesimal character $\Lambda$ is
integral. 
Let $\gamma = (\gamma_{1},\dots, \gamma_{n-1})$ be the highest weight
of the irreducible representation $\sigma$ of 
$M^{\eta} \simeq Spin(2n-1)$. 
Then the irreducible module  
$\overline{\pi}_{0,i}$, $i=2,\dots,n+1$, is a submodule of $\stWhLsmg$ 
if and only if $\gamma$ 
satisfies \eqref{eq:condition for sigma,2n+1,0i}. 
Especially, $\stWhLsmg$ is non-zero 
if and only if the highest weight of
$\sigma$ satisfies the condition 
\eqref{eq:condition for sigma,2n+1,0i} for some $i=2,\dots,n+1$. 
In these cases, $\overline{\pi}_{0,i}$ is the unique simple submodule of
$\stWhLsmg$. 
\end{proposition}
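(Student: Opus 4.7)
The plan is to parallel the proof of Proposition~\ref{proposition:condition for sigma, integral,2n}(2); the odd case is in fact cleaner because at a regular integral infinitesimal character the irreducible $\brgK$-modules with full Gelfand--Kirillov dimension are exhausted by the Langlands quotients $\overline{\pi}_{0,i}$, with no discrete series to treat separately. First I would invoke Lemma~\ref{lemma:condition for sub}: $\overline{\pi}_{0,i}$ embeds in $\stWhLsmg$ iff $\sigma \subset \delta_{0,k}|_{M^{\eta}}$ for every $k$ such that $\pi_{0,k}$ contains $\overline{\pi}_{0,i}$ as a subquotient. By Theorem~\ref{composition series of PS,2n+1} these are $k = i-1, i$ when $2 \leq i \leq n$, and $k = n, n+1$ when $i = n+1$ (recall $\pi_{0,n+1} = \overline{\pi}_{0,n+1}$ is already irreducible).

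Next I would read off the highest weight $\mu_{0,k}$ of $\delta_{0,k}$ as a weight of $M \simeq Spin(2n)$ from \eqref{eq:M-pseudocharacter}, using $\rhom = \sum_{p=1}^{n-1}(n-p)\epsilon_p$, and apply the classical $Spin(2n) \downarrow Spin(2n-1)$ branching rule: $\sigma_{\gamma} \subset \delta_{\mu}|_{M^{\eta}}$ iff $\mu_p \geq \gamma_p \geq \mu_{p+1}$ for $p=1,\dots,n-2$ and $\mu_{n-1} \geq \gamma_{n-1} \geq |\mu_n|$. Intersecting the two resulting systems---using the strict inequalities $\Lambda_{i-1} > \Lambda_i > \Lambda_{i+1}$ to pick the tighter bound at $p = i-2$ and $p = i-1$---collapses to exactly the inequalities of \eqref{eq:condition for sigma,2n+1,0i}. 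For $i = n+1$, the last-coordinate lower bound from $\delta_{0,n+1}$ is $|\mu_n| = |\Lambda_n| = \Lambda_n$ (since $\Lambda_n > |\Lambda_{n+1}| \geq 0$), while from $\delta_{0,n}$ it is $|\Lambda_{n+1}|$, and the former dominates; this matches the $p = n-1$ instance of the first line of \eqref{eq:condition for sigma,2n+1,0i} and justifies why the third line is omitted when $i > n$.

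For the ``if'' direction and the multiplicity-one assertion for the socle, I would appeal to the argument of \cite[Proposition~4.2]{T2} verbatim: once I have checked that the systems \eqref{eq:condition for sigma,2n+1,0i} for different values of $i$ are pairwise disjoint (immediate from $\Lambda_1 > \cdots > \Lambda_n > |\Lambda_{n+1}|$, since altering $i$ shifts which $\Lambda_p$ bounds $\gamma_{i-1}$ and the intervals thereby separate), I compute $\dim \Wh_{\eta}^{-\infty}(\pi_{0,i})$ two ways---by Casselman's exactness applied to the filtration of Theorem~\ref{composition series of PS,2n+1}, and by Wallach's Jacquet-integral theorem which gives $\dim \delta_{0,i}$---and match totals summand by summand over the $\sigma$'s allowed by the branching, forcing the multiplicity of each candidate simple submodule to be exactly one. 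The only genuinely delicate step is the intersection bookkeeping in the second paragraph; I do not expect any obstacle beyond those already navigated in the even case, since the removal of the $\pi_0, \pi_1$ summands actually eliminates the signature ambiguity that required a separate Whittaker-function argument there.
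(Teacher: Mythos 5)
Your proposal is correct and follows the same strategy the paper invokes (the paper simply says the $r=2n+1$ case is proved ``just in the same way'' as Proposition~\ref{proposition:condition for sigma, integral,2n}): identify via Lemma~\ref{lemma:condition for sub} and Theorem~\ref{composition series of PS,2n+1} the principal series $\pi_{0,i-1},\pi_{0,i}$ containing $\overline{\pi}_{0,i}$, apply the $Spin(2n)\downarrow Spin(2n-1)$ branching rule to intersect the two constraint systems and recover \eqref{eq:condition for sigma,2n+1,0i}, and transfer the sufficiency and socle-multiplicity-one arguments from \cite[Proposition~4.2]{T2}. Your observation that the odd case avoids the $\pi_0/\pi_1$ sign ambiguity (and hence the separate shift-operator argument of Lemma~\ref{lemma:pi_0,pi_1 sub}) is also exactly right.
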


\subsection{Composition factors}\label{subsection:composition factors} 

Hereafter, we denote $\stWhLsmg$ by 
$\stWhLgmg$ if the highest weight of $\sigma$ is
  $\gamma$. 
We also denote by $\sigma_{\gamma}$ the irreducible representation of
$M^{\eta}$ whose highest weight is $\gamma$. 
We determine the irreducible representations appearing in the
composition series of $\stWhLgmg$. 
\begin{proposition}\label{proposition:composition factors-1} 
\begin{enumerate}
\item
Suppose $r=2n$ and $\gamma$ satisfies 
\eqref{eq:condition for sigma,2n,0} 
or \eqref{eq:condition for sigma,2n,1}. 
Then, an irreducible composition factor of $\stWhLgmg$ is
isomorphic to $\pi_{0}$, $\pi_{1}$ or $\overline{\pi}_{0,n}$. 
\item
Suppose $r=2n$, $i\in\{2,\dots,n-1\}$ and $\gamma$ satisfies 
\eqref{eq:condition for sigma,2n,0i}. 
Then, an irreducible composition factor of $\stWhLgmg$ is
isomorphic to $\overline{\pi}_{0,i-1}$, $\overline{\pi}_{0,i}$ 
or $\overline{\pi}_{0,i+1}$. 
\item
Suppose $r=2n$ and $\gamma$ satisfies 
\eqref{eq:condition for sigma,2n,0i} for $i=n$. 
Then, an irreducible composition factor of $\stWhLgmg$ is
isomorphic to $\overline{\pi}_{0,n-1}$, $\overline{\pi}_{0,n}$, 
$\pi_{0}$ or $\pi_{1}$. 
\item
Suppose $r=2n+1$, $i\in\{2,\dots,n\}$ and $\gamma$ satisfies 
\eqref{eq:condition for sigma,2n+1,0i}. 
Then, an irreducible composition factor of $\stWhLgmg$ is
isomorphic to $\overline{\pi}_{0,i-1}$, $\overline{\pi}_{0,i}$ 
or $\overline{\pi}_{0,i+1}$. 
\item
Suppose $r=2n+1$ and $\gamma$ satisfies 
\eqref{eq:condition for sigma,2n+1,0i} for $i=n+1$. 
Then, an irreducible composition factor of $\stWhLgmg$ is
isomorphic to $\overline{\pi}_{0,n}$ or $\overline{\pi}_{0,n+1}$. 
\end{enumerate}
\end{proposition}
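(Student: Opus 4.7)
The plan is to combine exactness of a $\sigma$-equivariant Whittaker functor with Wallach's description of Whittaker vectors on principal series, reducing the problem to the branching computation already performed in the proof of Proposition~\ref{proposition:condition for sigma, integral,2n}.

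For a Harish-Chandra module $V$, write $\Wh_{\eta}^{-\infty}(V)$ for the space of continuous Whittaker functionals on its $C^{\infty}$-globalization, and set $W_{\sigma}(V) := \Hom_{M^{\eta}}(V_{\sigma}^{M^{\eta}}, \Wh_{\eta}^{-\infty}(V))$. This functor is exact, since $\Wh_{\eta}^{-\infty}$ is exact (Casselman) and projection onto the $\sigma$-isotypic part is exact ($M^{\eta}$ being reductive). By Wallach, $\Wh_{\eta}^{-\infty}(\pi_{0,k}) \simeq (V_{\delta_{0,k}}^{M})^{\ast}$ as $M^{\eta}$-modules, so $W_{\sigma}(\pi_{0,k}) \neq 0$ precisely when $\sigma \subset \delta_{0,k}|_{M^{\eta}}$. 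Furthermore $W_{\sigma}(\overline{\pi}_{0,1}) = 0$, since $\overline{\pi}_{0,1}$ is finite-dimensional and hence has no Whittaker vectors by Matumoto.

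The structural key --- underlying the set-up of \cite{T2} and the statement of Lemma~\ref{lemma:condition for sub} --- is that $\stWhLgmg$ is a subquotient of the direct sum $\bigoplus_{k \in S(\gamma)} \pi_{0,k}$, where $S(\gamma) := \{k : \sigma \subset \delta_{0,k}|_{M^{\eta}}\}$. Consequently every irreducible composition factor of $\stWhLgmg$ must appear as a composition factor of some $\pi_{0,k}$ with $k \in S(\gamma)$.

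The branching analysis in the proof of Proposition~\ref{proposition:condition for sigma, integral,2n} (resp. \ref{proposition:condition for sigma, integral,2n+1}) computes $S(\gamma)$ explicitly in each case: $S(\gamma) = \{n\}$ in Case (1); $S(\gamma) = \{i-1, i\}$ in Cases (2) and (4); $S(\gamma) = \{n-1, n\}$ in Case (3); and $S(\gamma) = \{n, n+1\}$ in Case (5). Substituting these into the composition series of the $\pi_{0,k}$ given in Theorems~\ref{composition series of PS,2n} and~\ref{composition series of PS,2n+1} (with $\overline{\pi}_{0,1}$ excluded by GK-dimension) yields exactly the candidate lists of the proposition. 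The main obstacle is the structural claim that $\stWhLgmg$ is controlled by $\bigoplus_{k \in S(\gamma)} \pi_{0,k}$; once granted, the rest is a routine union of composition factors.
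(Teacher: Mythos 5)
Your strategy diverges significantly from the paper's, and the place where you flag uncertainty is exactly where the argument breaks down. The paper's proof is a direct $K$-type argument: since $\stWhLgmg$ sits inside $C^{\infty}(G/M^{\eta}N; \sigma_{\gamma}\otimes\eta)_{K}$, whose restriction to $K$ is $\Ind_{M^{\eta}}^{K}(\sigma_{\gamma}^{\ast})$, Frobenius reciprocity forces every $K$-type of $\stWhLgmg$ --- hence every $K$-type of every composition factor --- to contain $\sigma_{\gamma}$ upon restriction to $M^{\eta}$. Comparing this constraint with the explicit $K$-spectra in Theorems~\ref{theorem:K-spectra,2n} and~\ref{theorem:K-spectra,2n+1} immediately rules out all but the listed modules. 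No information about principal series is needed at this stage.

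Your structural claim that $\stWhLgmg$ is a subquotient of $\bigoplus_{k\in S(\gamma)}\pi_{0,k}$ is not merely unproved --- it is false. Take $r=2n$ with $\gamma$ satisfying \eqref{eq:condition for sigma,2n,0}. Your own computation gives $S(\gamma)=\{n\}$, so the claim would say $\stWhLgmg$ is a subquotient of $\pi_{0,n}$. But by Theorem~\ref{theorem:main, 2n}(2) $\stWhLgmg$ has length three with socle the single factor $\pi_{0}$, while $\pi_{0,n}$ also has length three with socle $\pi_{0}\oplus\pi_{1}$ (Theorem~\ref{composition series of PS,2n}). Two modules of the same length with non-isomorphic socles cannot be subquotients of one another. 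The weaker assertion you actually use --- that the set of composition factors of $\stWhLgmg$ is contained in that of $\bigoplus_{k\in S(\gamma)}\pi_{0,k}$ --- turns out to be true, but it does not follow from exactness of $W_{\sigma}$ together with Wallach's theorem; those tools control Whittaker functionals, not the module $\stWhLgmg$ directly, and the inference would need exactly the kind of global comparison between $\stWhLgmg$ and principal series that you have not supplied. The $K$-type argument sidesteps this entirely and is the intended proof.
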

\begin{proof}
We will show (2). 
The proofs of (1), (3), (4) and (5) are the same. 

Assume that $\gamma$ satisfies \eqref{eq:condition for sigma,2n,0i}. 
Since $\stWhLgmg$ is induced from the representation 
$\sigma_{\gamma} \otimes \eta$ of $M^{\eta} N$, 
each $K$-type of a composition factor of $\stWhLgmg$ must contain the
representation $\sigma_{\gamma}$. 
By \eqref{eq:K-spectrum of pi_0i,2n} and \eqref{eq:condition for sigma,2n,0i}, 
this is possible if and only if the composition factor is isomorphic
to $\overline{\pi}_{0,i-1}$, $\overline{\pi}_{0,i}$ or 
$\overline{\pi}_{0,i+1}$. 
\end{proof}

\begin{proposition}\label{proposition:at least one}
Suppose that one of the following conditions is satisfied: 
\begin{enumerate}
\item
$r=2n$, $\gamma$ satisfies 
\eqref{eq:condition for sigma,2n,0} 
or \eqref{eq:condition for sigma,2n,1}, 
and $\pi$ is $\pi_{0}$, $\pi_{1}$ or $\overline{\pi}_{0,n}$.   
\item
$r=2n$, $i\in\{2,\dots,n-1\}$, $\gamma$ satisfies 
\eqref{eq:condition for sigma,2n,0i}, 
and $\pi$ is $\overline{\pi}_{0,i-1}$, $\overline{\pi}_{0,i}$ 
or $\overline{\pi}_{0,i+1}$. 
\item
$r=2n$, $\gamma$ satisfies 
\eqref{eq:condition for sigma,2n,0i} for $i=n$, 
and $\pi$ is $\overline{\pi}_{0,n-1}$, $\overline{\pi}_{0,n}$,
$\pi_{0}$ or $\pi_{1}$. 
\item
$r=2n+1$, $i\in\{2, \dots, n\}$, $\gamma$ satisfies 
\eqref{eq:condition for sigma,2n+1,0i}. 
and $\pi$ is $\overline{\pi}_{0,i-1}$, $\overline{\pi}_{0,i}$ 
or $\overline{\pi}_{0,i+1}$. 
\item
$r=2n+1$,  $\gamma$ satisfies 
\eqref{eq:condition for sigma,2n+1,0i} for $i=n+1$, 
and $\pi$ is $\overline{\pi}_{0,n}$ or $\overline{\pi}_{0,n+1}$. 
\end{enumerate}
Then the multiplicity of $\pi$ in $\stWhLgmg$ is at least one. 
\end{proposition}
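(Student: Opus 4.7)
\noindent\textit{Plan of proof.} My approach is to realize each claimed composition factor by applying the Jacquet integral to a suitable principal series $\pi_{0,k}$ and then to extract composition multiplicities via the exactness of the Whittaker functor (Casselman--Matumoto \cite{M1}).

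The first step is, for each configuration $(\gamma,\pi)$ listed in (1)--(5), to single out the set of indices $k$ for which $\sigma_{\gamma}\subset\delta_{0,k}|_{M^{\eta}}$. A direct comparison of the branching rule for $M\simeq Spin(r-1)$ down to $M^{\eta}\simeq Spin(r-2)$ (recalled in the proof of Proposition~\ref{proposition:condition for sigma, integral,2n}) with the hypotheses \eqref{eq:condition for sigma,2n,0}, \eqref{eq:condition for sigma,2n,1}, \eqref{eq:condition for sigma,2n,0i} or \eqref{eq:condition for sigma,2n+1,0i} identifies those $k$ explicitly. In case (2), for instance, they are precisely $k\in\{i-1,i\}$, and by Theorem~\ref{composition series of PS,2n} the composition factors of $\pi_{0,i-1}$ and $\pi_{0,i}$ collectively exhaust $\{\overline{\pi}_{0,i-1},\overline{\pi}_{0,i},\overline{\pi}_{0,i+1}\}$. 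Analogous bookkeeping handles (1), (3), (4), (5), where the role of $\pi_{0,n}$ for $r=2n$ and of $\pi_{0,n+1}=\overline{\pi}_{0,n+1}$ for $r=2n+1$ is to capture the remaining factors.

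The second step is to invoke the Jacquet integral $J_{\nu_{0,k}}$ (meromorphically continued as in Wallach's theorem) on $\pi_{0,k}$, paired against $\sigma_{\gamma}^{\ast}$-isotypic functionals on $\delta_{0,k}^{\ast}$, thereby producing nonzero $\brgK$-maps into $\stWhLgmg$. Combined with the exactness of the functor $\Wh_{\eta}^{-\infty}(\,\cdot\,)$ applied to the short exact sequences of the $\pi_{0,k}$'s supplied by Theorems~\ref{composition series of PS,2n} and \ref{composition series of PS,2n+1}, one obtains linear relations among the $\sigma_{\gamma}^{\ast}$-isotypic Whittaker vector dimensions of the various composition factors of the $\pi_{0,k}$, with totals $\dim V^{M}_{\delta_{0,k}}\cdot[\delta_{0,k}|_{M^{\eta}}:\sigma_{\gamma}]$ on each side. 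Using the unique-simple-submodule information of Propositions~\ref{proposition:condition for sigma, integral,2n} and \ref{proposition:condition for sigma, integral,2n+1} as a seed and iteratively excising the known socle of $\stWhLgmg$, these relations then force each claimed composition factor to occur at least once.

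The main obstacle I anticipate is the passage from socle multiplicity, which Whittaker-vector dimension counting directly controls, to the composition multiplicity asserted here: one must argue that the successive socles of $\stWhLgmg$ modulo its known submodules continue to produce the remaining claimed irreducibles, typically by re-applying the Jacquet-integral argument to the quotient. This is delicate in cases (1) and (3), where the signature condition on $\gamma_{n-1}$ in \eqref{eq:condition for sigma,2n,0} and \eqref{eq:condition for sigma,2n,1} distinguishing $\pi_{0}$ from $\pi_{1}$ is invisible to $M^{\eta}$-branching, so at that point the argument must invoke the explicit Whittaker function construction deferred to Lemma~\ref{lemma:pi_0,pi_1 sub}.
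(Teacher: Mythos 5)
Your proposal isolates the correct ingredients — the Jacquet integral on the relevant principal series $\pi_{0,k}$, the Casselman--Matumoto exactness of $\Wh_{\eta}^{-\infty}(\cdot)$, and the socle data from Propositions~\ref{proposition:condition for sigma, integral,2n} and \ref{proposition:condition for sigma, integral,2n+1} — and these are indeed the tools that the paper's cited proof \cite[Proposition~4.4]{T2} relies on. However, as written the argument has a genuine gap for one of the required composition factors in each case, a gap you partially anticipate but do not resolve.

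Take case~(4) with $i\leq n$. There $\sigma_{\gamma}\subset\delta_{0,k}|_{M^{\eta}}$ precisely for $k\in\{i-1,i\}$. The Jacquet map $J_{i-1}\colon\pi_{0,i-1}\to\stWhLgmg$ must be injective: the socle of a nonzero submodule of $\stWhLgmg$ is $\overline{\pi}_{0,i}$, and since $\pi_{0,i-1}$ has unique simple submodule $\overline{\pi}_{0,i}$ (Theorem~\ref{composition series of PS,2n+1}), no proper nonzero quotient of $\pi_{0,i-1}$ can have socle $\overline{\pi}_{0,i}$. This delivers $\overline{\pi}_{0,i-1}$ and $\overline{\pi}_{0,i}$. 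But the same reasoning shows $J_{i}\colon\pi_{0,i}\to\stWhLgmg$ is \emph{never} injective: the socle of $\pi_{0,i}$ is $\overline{\pi}_{0,i+1}\not\simeq\overline{\pi}_{0,i}$, so the kernel contains $\overline{\pi}_{0,i+1}$ and the image is merely $\overline{\pi}_{0,i}$. Thus the Jacquet integrals produce only two of the three required factors. The remaining one — $\overline{\pi}_{0,i+1}$ here, and analogously $\pi_{1}$ in case~(1) under \eqref{eq:condition for sigma,2n,0}, $\pi_{0}$ under \eqref{eq:condition for sigma,2n,1}, $\pi_{0}\oplus\pi_{1}$ in case~(3) — is invisible both to the Jacquet maps and to the $\sigma_{\gamma}^{\ast}$-isotypic Whittaker-vector dimension count, since $\Wh_{\eta}^{-\infty}(\overline{\pi}_{0,i+1})$ contains no $\sigma_{\gamma}^{\ast}$-isotypic vectors when $\gamma$ satisfies condition~$i$ rather than condition~$i+1$. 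Your proposed remedy of ``re-applying the Jacquet-integral argument to the quotient'' does not close the gap: $\stWhLgmg/\overline{\pi}_{0,i}$ is not a principal series and carries no natural Jacquet integral. What is missing is a separate argument that $\stWhLgmg$ actually contains a nonzero moderate-growth Whittaker function in a $K$-type that lies in $\widehat{K}(\overline{\pi}_{0,i+1})$ but not in $\widehat{K}(\overline{\pi}_{0,i-1})\cup\widehat{K}(\overline{\pi}_{0,i})$; this is typically established by directly solving the scalar Whittaker equation for such a $K$-type or by applying the shift operators of \S\ref{section:K-type shift operators} to a known solution, and your proposal needs to supply that step.
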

\begin{proof}
This can be shown just in the same way as \cite[Proposition~4.4]{T2}. 
\end{proof}


\section{$K$-type shift operators}
\label{section:K-type shift operators}

In order to determine the socle filtration of
$\stWhLgmg$, we need to write the actions of elements in
$\lie{s}$ on this space explicitly. 
This is achieved by the $K$-type shift operators. 
The contents of this section are almost the same as those of
\cite[\S5]{T2}. 
So we do not repeat the explanation and refer the readers to this
paper.

\subsection{Gelfand-Tsetlin basis} 
\label{subsection:Gelfand-Tsetlin basis} 
In order to write the $K$-type shift operators explicitly, 
we realize the $K$-types 
by using the Gelfand-Tsetlin basis (\cite{GT}). 

\begin{definition}\label{definition:GT}
Let $\lambda = (\lambda_{1}, \dots, \lambda_{\lfloor r/2 \rfloor})$ be
a dominant integral weight of $Spin(r)$. 
A {\it $\lambda$-Gelfand-Tsetlin pattern} is a set of vectors 
$Q=(\vect{q}_{1}, \dots, \vect{q}_{r-1})$ such that 
\begin{enumerate}
\item
$\vect{q}_i=(q_{i,1}, q_{i,2}, \dots, q_{i,\lfloor(i+1)/2\rfloor})$. 
\item
The numbers $q_{i,j}$ are all integers or all half integers. 
\item
$q_{2i+1,j} \geq q_{2i,j} \geq q_{2i+1,j+1}$, 
for any $j=1, \dots, i-1$.  
\item
$q_{2i+1,i} \geq q_{2i,i} \geq |q_{2i+1,i+1}|$.  
\item
$q_{2i,j} \geq q_{2i-1,j} \geq q_{2i,j+1}$, 
for any $j=1, \dots, i-1$. 
\item
$q_{2i,i} \geq q_{2i-1,i} \geq -q_{2i,i}$.  
\item
$q_{r-1,j}=\lambda_{j}$. 
\end{enumerate}
Here, $\lfloor a \rfloor$ is the largest integer not greater than
$a$. 
The set of all $\lambda$-Gelfand-Tsetlin patterns is denoted by
$GT(\lambda)$. 
\end{definition}
\begin{notation}
For any set or number $\ast$ depending on $Q \in GT(\lambda)$, 
we denote it by $\ast(Q)$, if we need to specify $Q$. 
For example, $q_{i,j}(Q)$ is the $q_{i,j}$ part of $Q \in GT(\lambda)$. 
\end{notation}
\begin{theorem}[\cite{GT}] 
For a dominant integral weight $\lambda$ of $Spin(r)$, 
let $(\tau_{\lambda}, V_{\lambda}^{Spin(r)})$ be the irreducible
representation of $Spin(r)$ with the highest weight $\lambda$. 
Then $GT(\lambda)$ is identified with a basis of 
$(\tau_{\lambda}, V_{\lambda}^{Spin(r)})$. 
\end{theorem}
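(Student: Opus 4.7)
The plan is to proceed by induction on $r$, leveraging the multiplicity-free branching rule for the restriction from $Spin(r)$ to $Spin(r-1)$. The base case $r=2$ is immediate, since every irreducible representation of $Spin(2)$ is one-dimensional and $GT(\lambda)$ consists of the single pattern $\vect{q}_{1}=(\lambda_{1})$.

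For the inductive step, I would first invoke the classical branching rule (due to Zhelobenko) stating that, for a dominant integral weight $\lambda$ of $Spin(r)$,
\[
\tau_{\lambda}|_{Spin(r-1)} \simeq \bigoplus_{\mu} \tau_{\mu},
\]
where $\mu = (\mu_{1},\dots,\mu_{\lfloor (r-1)/2 \rfloor})$ runs over those dominant integral weights of $Spin(r-1)$ which interlace with $\lambda$ in exactly the manner prescribed by conditions (3)--(6) of Definition~\ref{definition:GT} applied between $\vect{q}_{r-1}=\lambda$ and $\vect{q}_{r-2}=\mu$. By the induction hypothesis, each summand $V_{\mu}^{Spin(r-1)}$ admits a basis indexed by $GT(\mu)$. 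Gluing the top row $\vect{q}_{r-1}=\lambda$ onto each $\mu$-pattern defines a canonical bijection
\[
\bigsqcup_{\mu} GT(\mu) \longrightarrow GT(\lambda),
\]
and the disjoint union of the corresponding bases yields a basis of $V_{\lambda}^{Spin(r)}$ of the required cardinality. Iterating the restriction chain $Spin(r) \supset Spin(r-1) \supset \cdots \supset Spin(2)$ then shows that this basis is canonical (up to scalars) as the one simultaneously diagonalizing the action of this tower of subgroups, which is the content of the Gelfand-Tsetlin theorem.

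The principal obstacle is establishing the branching rule itself, which in the orthogonal/spin setting requires a parity-dependent case analysis: the rule from $Spin(2i+1)$ to $Spin(2i)$ produces the condition $q_{2i+1,i} \geq q_{2i,i} \geq |q_{2i+1,i+1}|$, while the rule from $Spin(2i)$ to $Spin(2i-1)$ produces $q_{2i,i} \geq q_{2i-1,i} \geq -q_{2i,i}$. The absolute value and the symmetric lower bound reflect the different actions of the Weyl groups of types $B$ and $D$ on the last coordinate; both rules can be derived either by Littlewood's method via character arithmetic with the defining representation, or by applying the Weyl character formula across the embedding and carefully analyzing cancellations. Once these branching laws are in place, the multiplicity-freeness renders the inductive construction of the GT basis automatic.
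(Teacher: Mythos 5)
The paper does not prove this theorem; it is cited from Gelfand--Tsetlin as a classical result, so there is no internal argument to compare against. Your sketch reproduces the standard proof via the multiplicity-free branching chain $Spin(r) \supset Spin(r-1) \supset \cdots \supset Spin(2)$, and it is correct in outline. The interlacing conditions you extract from the alternating $B$/$D$ branching rules agree with items (3)--(6) of Definition~\ref{definition:GT} (condition (4), with the absolute value on $q_{2i+1,i+1}$, encodes the $D_{i+1}\downarrow B_i$ step; condition (6), with the symmetric bound $\pm q_{2i,i}$, encodes $B_i\downarrow D_i$), and you correctly note that ``all integers or all half integers'' accommodates the spin representations as well as the single-valued ones. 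Two points are acknowledged but left to the reader, and these are indeed where the real work lies: first, the branching rules themselves must be proved uniformly over integer and half-integer highest weights (Zhelobenko's argument handles both); second, for the basis to be canonical rather than merely of the right cardinality, one needs that the joint eigenspace for the full tower of Gelfand--Tsetlin subalgebras (equivalently, for the Casimirs of all the nested $Spin(k)$) is one-dimensional --- the multiplicity-freeness of each single restriction step gives exactly this by a straightforward induction, which you gesture at but do not spell out. With those two gaps filled, the argument is complete.
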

The action of the element $A_{p,q} \in \lie{so}(r)$ is expressed as follows. 
For $j > 0$, let 
\begin{align*}
& 
l_{2i-1,j} := q_{2i-1,j} + i - j, 
& 
& 
l_{2i-1,-j} := - l_{2i-1,j}, 
\\
& 
l_{2i,j} := q_{2i,j} + i + 1 - j, 
& 
& 
l_{2i,-j} := - l_{2i,j} + 1,  
\end{align*}
and let $l_{2i,0} = 0$. 
Define $a_{p,q}(Q)$ by 
\begin{align*} 
a_{2i-1,j}(Q) 
&= 
\sgn j \, 
\sqrt{-
\frac{\prod_{1 \leq |k| \leq i-1}(l_{2i-1,j} + l_{2i-2, k}) 
\prod_{1 \leq |k| \leq i} (l_{2i-1,j} + l_{2i, k})}
{4 \prod_{\genfrac{}{}{0pt}{}{1 \leq |k| \leq i,}{k \not= \pm j}} 
(l_{2i-1,j} + l_{2i-1,k}) (l_{2i-1,j} + l_{2i-1,k} + 1)}
},
\intertext{for $j = \pm 1, \dots, \pm i$, and}  
a_{2i,j}(Q) 
&= 
\varepsilon_{2i,j}(Q) 
\sqrt{-
\frac{\prod_{1 \leq |k| \leq i}(l_{2i,j} + l_{2i-1, k}) 
\prod_{1 \leq |k| \leq i+1} (l_{2i,j} + l_{2i+1, k})}
{(4 l_{2i,j}^{2} - 1) 
\prod_{\genfrac{}{}{0pt}{}{0 \leq |k| \leq i}{k \not= \pm j}} 
(l_{2i,j} + l_{2i,k}) (l_{2i,j} - l_{2i,k})}
},
\end{align*}
for $j = 0, \pm 1, \dots, \pm i$, 
where $\varepsilon_{2i,j}(Q)$ is $\sgn j$ if $j \not= 0$, 
and $\sgn(q_{2i-1,i}\, q_{2i+1,i+1})$ if $j = 0$. 

Let $\sigma_{a,b}$ be the shift operator, sending $\vect{q}_a$ to 
$\vect{q}_{a} + (0, \dots, \overset{|b|}{\sgn(b)}, 0 ,\dots, 0)$. 

\begin{theorem}[\cite{GT}]
Under the above notation, the action of the Lie algebra is expressed
as 
\begin{align*}
\tau_\lambda(A_{2i+1,2i})Q
&=
\sum_{1 \leq |j| \leq i} a_{2i-1,j}(Q)\, \sigma_{2i-1,j}Q, 
\\
\tau_\lambda(A_{2i+2,2i+1})Q
&=
\sum_{0 \leq |j| \leq i} a_{2i,j}(Q)\, \sigma_{2i,j}Q. 
\end{align*}
\end{theorem}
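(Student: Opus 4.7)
The plan is to build the representation inductively along the chain $Spin(2) \subset Spin(3) \subset \dots \subset Spin(r)$, exploiting multiplicity-freeness. The branching rule $Spin(k) \downarrow Spin(k-1)$ has all multiplicities $\leq 1$, with admissible weights described exactly by the interlacing inequalities in Definition~4.1. Consequently, each pattern $Q \in GT(\lambda)$ labels the unique (up to scalar) simultaneous eigenvector of the tower of Casimirs $C_k$ of $Spin(k)$ for $k=2,\dots,r-1$, and the patterns therefore form a canonical orthogonal basis of $V_\lambda^{Spin(r)}$. This reduces the theorem to determining the scalar matrix entries $a_{p,j}(Q)$.

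First I would identify the support of $A_{p+1,p}$ in this basis. Since $A_{p+1,p}$ commutes with $Spin(p-1)$, it can change only the $p$-th row $\vect{q}_p$ of $Q$. Viewed as a $Spin(p-1)$-intertwiner, $A_{p+1,p}$ lies in the vector representation of $Spin(p-1)$, whose weights are $\pm e_j$ (together with $0$ when $p-1$ is odd). This forces the only nonzero shifts to be $\sigma_{p,j}$ by $\pm 1$ in a single entry of $\vect{q}_p$ (plus a $j=0$ shift when $p$ is even), matching the formulas.

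Second I would pin down $|a_{p,j}(Q)|^2$ using the Casimirs. Writing $C_{p+1}-C_{p-1}$ in terms of $A_{p+1,p}$ and $Spin(p-1)$-commuting operators and then taking the diagonal matrix element at $Q$ yields a system of linear identities of the form
\begin{equation*}
\sum_{j} |a_{p,j}(Q)|^{2}\,\phi_{j}(l_{p,\bullet}) \; = \; \Phi(l_{p\pm1,\bullet}),
\end{equation*}
for a family of test polynomials $\phi_j$. The key algebraic tool is the partial-fraction identity
\begin{equation*}
\sum_{j} \frac{\prod_{k}(x_{j}+y_{k})}{\prod_{k\neq j}(x_{j}-x_{k})(x_{j}+x_{k}+c)} \;=\; \text{a simple function of the } y_k,
\end{equation*}
which, iterated over the shifts and combined with the fact that $|a_{p,j}|^{2}$ is a rational function in the $l$-labels with poles only where two labels in the same row coincide, forces the product form stated in the theorem.

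Finally, the signs and the factor $\varepsilon_{2i,j}(Q)$ are fixed by (a)~the requirement that each $A_{p+1,p}$ be skew-Hermitian in the orthonormal basis, which ties $a_{p,j}(\sigma_{p,j}^{-1}Q)$ to $\overline{a_{p,-j}(Q)}$ up to a global choice, and (b)~the remaining commutation relation $[A_{p+2,p+1},A_{p+1,p}]=A_{p+2,p}$, which is the only nontrivial $\lie{so}$-relation not built in by construction. The main obstacle is this last verification: it reduces to a finite algebraic identity among matrix entries on $2\times 2$ blocks of patterns differing by simultaneous shifts in rows $p$ and $p+1$. That identity is where the delicate bookkeeping lives, including the $\lie{so}(2i+1)$ versus $\lie{so}(2i)$ asymmetry (the extra $j=0$ shift and the sign $\varepsilon_{2i,j}$) and the half-integer normalizations appropriate to $Spin(r)$ rather than $SO(r)$.
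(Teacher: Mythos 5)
The paper cites \cite{GT} for this theorem and gives no proof of its own, so there is no internal argument to compare against; I will therefore assess your sketch on its own terms. The architecture you propose --- exploit multiplicity-free branching along the chain $Spin(2)\subset\cdots\subset Spin(r)$ to get a canonical basis labelled by patterns, determine the support of each generator by equivariance, recover the magnitudes $|a_{p,j}(Q)|^{2}$ from Casimir identities and partial fractions, and fix signs via skew-Hermiticity together with the surviving bracket relations --- is the standard route and is sound in outline.

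Two steps are, however, misstated. First, you write that ``viewed as a $Spin(p-1)$-intertwiner, $A_{p+1,p}$ lies in the vector representation of $Spin(p-1)$.'' This is false: $A_{p+1,p}$ commutes with $\lie{so}_{p-1}$, so as a $Spin(p-1)$-intertwiner it sits in the \emph{trivial} representation, not the vector one. What actually controls the support is that $A_{p+1,p}$, taken modulo $\lie{so}_{p}$, lies in the vector representation $\C^{p}$ of $Spin(p)$, and the only row it can move is the one labelling $Spin(p)$-isotypic components, namely $\vect{q}_{p-1}$ (not $\vect{q}_{p}$). The weights of $\C^{p}$ are $\pm\epsilon_{1},\dots,\pm\epsilon_{\lfloor p/2\rfloor}$ together with $0$ when $p$ is odd; this yields the shifts $\sigma_{p-1,j}$ and, in particular, the extra $j=0$ term precisely for the generators $A_{2i+2,2i+1}$ with $p=2i+1$ odd --- the opposite parity from the one you wrote. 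Second, the quadratic Casimirs $C_{k}$ do not by themselves separate irreducibles of $Spin(2m)$ (the Pfaffian eigenvalue is also needed to distinguish highest weights differing only in the sign of the last coordinate), so ``simultaneous eigenvector of the tower of Casimirs'' should be replaced either by a direct appeal to multiplicity-free branching, or by diagonalizing the full Gelfand--Tsetlin subalgebra generated by the centers $Z(U(\lie{so}_{k}))$. With these corrections the argument runs as intended; the remaining hard work, as you rightly say, is the Casimir-difference bookkeeping that produces the explicit product formula and the verification of $[A_{p+2,p+1},A_{p+1,p}]=A_{p+2,p}$ on overlapping pairs of rows, which is where $\varepsilon_{2i,j}(Q)$ is forced.
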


\begin{remark}\label{remark:GT restriction}
The Gelfand-Tsetlin basis is compatible with the restriction to
smaller groups $Spin(k)$, $k=1,\dots,r-1$. 
More precisely, the restriction of $\tau_{\lambda}$ to $Spin(r-1)$ is
multiplicity free, and the highest weights of the irreducible
representation appearing in $\tau_{\lambda}|_{Spin(r-1)}$ are the above 
$\vect{q}_{r-2}$'s. 
Moreover, 
the vector $Q = (\vect{q}_{1}, \dots, \vect{q}_{r-2}, \vect{q}_{r-1})$
is contained in the $\vect{q}_{r-2}$-isotypic subspace of 
$\tau_{\lambda}|_{Spin(r-1)}$. 
\end{remark}

\begin{remark}\label{remark:GT of dual}
The highest weight $\lambda^{\ast}$ of the contragredient representation 
$(\tau_{\lambda^{\ast}}, V_{\lambda^{\ast}}^{K})$ of 
$(\tau_{\lambda}, V_{\lambda}^{K})$ is 
$\lambda^{\ast} = (\lambda_{1}, \dots, (-1)^{n} \lambda_{n})$ if
$r=2n$, and 
$\lambda^{\ast} = \lambda$ if $r=2n+1$. 
In this case, 
$Q^{\ast} := (\vect{q}_{1}^{\ast}, \dots, \vect{q}_{r-1}^{\ast}) 
\in GT(\lambda^{\ast})$, 
$\vect{q}_{2i+1}^{\ast} := \vect{q}_{2i+1}$, 
$\vect{q}_{2i}^{\ast} 
:= (q_{2i,1}, \dots, q_{2i,i-1},(-1)^{i} q_{2i,i})$ is dual to
$Q \in GT(\lambda)$. 
\end{remark}

\subsection{Shift operators}\label{subsection:shift operators} 
Choose a $K$-type $(\tau_{\lambda},V_{\lambda}^{K})$ of
$\stWhLgmg$, whose highest weight is $\lambda$. 
By appropriately defining the action of $\lie{g}$ on 
$C^{\infty}(A \rightarrow 
\Hom_{M^{\eta}}(V_{\lambda}^{K}, V_{\gamma}^{M^{\eta}}))$, 
the space $\Hom_{K}(V_{\lambda}^{K},\stWhLgmg)$ 
is identified with 
\begin{align*}
\{\widetilde{\phi} \in 
C^{\infty}(A \rightarrow 
&\Hom_{M^{\eta}}(V_{\lambda}^{K}, V_{\gamma}^{M^{\eta}}))
\,|\,
\\
& z \cdot \widetilde{\phi} = \chi_{\Lambda}(z) \widetilde{\phi}, \ 
z \in Z(\lie{g});
\
\widetilde{\phi} \mbox{ grows moderately at infinity}\}.
\end{align*}
%
%
The space 
$\Hom_{M^{\eta}}
(V_{\lambda}^{K},V_{\gamma}^{M^{\eta}})$ is
isomorphic to 
$(V_{\lambda^{\ast}}^{K} \otimes 
V_{\gamma}^{M^{\eta}})^{M^{\eta}}$, 
the space of $M^{\eta}$-invariants in 
$V_{\lambda^{\ast}}^{K} \otimes V_{\gamma}^{M^{\eta}}$. 
By Remark~\ref{remark:GT restriction}, a basis of this space
is identified with the ``partial Gelfand-Tsetlin patterns'' 
\begin{align}
GT((\lambda/\gamma)^{\ast}) 
:= 
\{Q = &(\vect{q}_{r-3}, \vect{q}_{r-2}, \vect{q}_{r-1}) \,|\, 
\notag
\\
&\vect{q}_{r-3} = \gamma^{\ast}, \vect{q}_{r-1} = \lambda^{\ast}; \, 
\mbox{ satisfies Definition~\ref{definition:GT} (1)--(3)}\}. 
\notag
\end{align}
%
%
Let $V_{(\lambda/\gamma)^{\ast}}^{K/M^{\eta}}$ be the vector space
spanned by $GT((\lambda/\gamma)^{\ast})$ 
and let 
$C^{\infty}(A \rightarrow V_{(\lambda/\gamma)^{\ast}}^{K/M^{\eta}})$
be the space of $V_{(\lambda/\gamma)^{\ast}}^{K/M^{\eta}}$-valued 
$C^{\infty}$ functions on $A$. 
By the above discussion, 
$\Hom_{K}(V_{\lambda}^{K}, \stWhLgmg)$ is identified with 
a subspace of 
$C^{\infty}(A \rightarrow V_{(\lambda/\gamma)^{\ast}}^{K/M^{\eta}})$. 

%
Denote by $\Delta_{\lie{s}}$ the set of weights of the adjoint
representation $(\Ad, \lie{s})$ of $K$ on $\lie{s}$. 
For every $\alpha \in \Delta_{\lie{s}}$, 
$K$-type shift operators 
\begin{align*}
P_{\alpha} :\ 
& 
C^{\infty}(A \rightarrow V_{(\lambda/\gamma)^{\ast}}^{K/M^{\eta}})
&
&
\quad \rightarrow \quad 
&
&
C^{\infty}
(A \rightarrow V_{(\lambda+\alpha/\gamma)^{\ast}}^{K/M^{\eta}}),
\\
&
\hspace{15mm} \cup 
&
&
&
&
\hspace{15mm}
\cup 
\\
&
\Hom_{K}(V_{\lambda}^{K},\stWhLsmg)
&
&
\quad \rightarrow \quad 
&
&
\Hom_{K}(V_{\lambda+\alpha}^{K},\stWhLsmg) 
\end{align*}
realize the action of elements in $\lie{s}$ on $\stWhLgmg$ 
(see \cite[\S5]{T2}). 

For notational convenience, let $\epsilon_{-k} := \epsilon_{k}$ and 
$\epsilon_{0} = 0$. 
Firstly, consider the case $r=2n$. 
In this case, 
$\Delta_{\lie{s}} = \{\epsilon_{k}\, |\,
k=\pm 1,\dots,\pm n\}$. 
By Remark~\ref{remark:GT of dual}, 
$(\lambda + \epsilon_{k})^{\ast} 
= \lambda^{\ast} + \epsilon_{k}$ if $|k| < n$, 
and 
$(\lambda \pm \epsilon_{n})^{\ast} 
= \lambda^{\ast} \pm (-1)^{n} \epsilon_{n}$. 
Secondly, consider the case $r=2n+1$. 
In this case, 
$\Delta_{\lie{s}} = \{\epsilon_{k}\, |\,
k=0, \pm 1,\dots,\pm n\}$. 
By Remark~\ref{remark:GT of dual}, 
$(\lambda + \epsilon_{k})^{\ast} 
= \lambda + \epsilon_{k}$. 

For simplicity, 
denote by $P_{k}$ the operator 
$P_{\epsilon_{k}}$ for $k=0, \pm 1, \dots, \pm n$. 
When $r=2n$, the explicit forms of these operators are obtained in
\cite{T}. 
In the case $r=2n+1$, they are obtained just in the same way. 
\begin{proposition}\label{proposition:K-type shift}
Suppose $\phi(a) 
= \sum_{Q \in GT((\lambda/\gamma)^{\ast})} c(Q; a)\, Q
\in 
C^\infty(A \rightarrow V_{(\lambda/\gamma)^{\ast}}^{K/M^{\eta}})$. 
\begin{enumerate}
\item
When $r=2n$, the $K$-type shift operator $P_{k}$, 
$k=\pm 1,\dots,\pm(n-1)$ 
is given by the following formula: 
\begin{align}
P_{k} \phi(a) 
&= 
-\sum_{Q \in GT((\lambda/\gamma)^{\ast})} \hspace{-5mm}
a_{2n-1,k}(Q) 
(\partial_{a} - l_{2n-1,k}(Q)+n-1)\, c(Q; a)\, \sigma_{2n-1,k} Q 
\label{eq:P_k,2n}
\\
& \quad 
+\frac{\I \xi}{a} 
\sum_{0 \leq |j| \leq n-1} 
\sum_{\sigma_{2n-2,-j}Q \in GT((\lambda/\gamma)^{\ast})} 
\notag\\
& \qquad 
\times 
\frac{a_{2n-2,j}(\sigma_{2n-2,-j} Q)\, a_{2n-1,k}(Q)}
{l_{2n-2,j}(\sigma_{2n-2,-j}Q)-l_{2n-1,k}(Q)} 
c(\sigma_{2n-2,-j}Q; a)\, \sigma_{2n-1,k} Q. 
\notag
\end{align}
Here, $\partial_{a}$ is the differential operator $a (d/da)$. 
When $k=\pm n$, the ``$k$'' in the right hand side is replaced by 
$(-1)^{n} k$. 
\item
When $r=2n+1$, the $K$-type shift operator  $P_{k}$, 
$k=0, \pm 1,\dots,\pm n$ 
is given by the following formula: 
\begin{align}
P_{k} \phi(a) 
&= 
-\sum_{Q \in GT((\lambda/\gamma)^{\ast})}  \hspace{-5mm}
a_{2n,k}(Q) 
(\partial_{a} - l_{2n,k}(Q)+n)\, c(Q; a)\, \sigma_{2n,k} Q 
\label{eq:P_k,2n+1}
\\
& \quad 
+\frac{\I \xi}{a} 
\sum_{1 \leq |j| \leq n} 
\sum_{\sigma_{2n-1,-j}Q \in GT((\lambda/\gamma)^{\ast})} 
\notag\\
& \qquad 
\times 
\frac{a_{2n-1,j}(\sigma_{2n-1,-j} Q)\, a_{2n,k}(Q)}
{l_{2n-1,j}-l_{2n,k}(Q)} 
c(\sigma_{2n-1,-j}Q; a)\, \sigma_{2n,k} Q. 
\notag
\end{align}
\end{enumerate}
\end{proposition}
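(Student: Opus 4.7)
The plan is to unwind the construction of $P_k$ from the $\lie{s}$-action on $\phi$. By the $\brgK$-module structure, $P_\alpha$ is the composition of the $\lie{s}$-action on $\phi \in C^\infty(A \to V_{(\lambda/\gamma)^*}^{K/M^\eta})$ with the $K$-equivariant projection arising from
\[
V_{\lambda^*}^K \otimes \lie{s} \simeq \bigoplus_{\alpha \in \Delta_\lie{s}} V_{(\lambda+\alpha)^*}^K,
\]
followed by projection onto $M^\eta$-invariants. The task is to express this composition explicitly in the Gelfand-Tsetlin basis of \S\ref{subsection:Gelfand-Tsetlin basis}.

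First I would compute the $\lie{s}$-action on $\phi(a)$ itself. For a weight vector $W \in \lie{s}$ of $\lie{k}$-weight $\epsilon_k$, the Iwasawa decomposition
\[
W = \Ad(a)\bigl(W_\lie{k}(a) + c_h(a)\,h + W_\lie{n}(a)\bigr)
\]
gives, in view of the $\lie{g}$-action on $\phi$ recorded in \cite[\S5]{T2},
\[
W \cdot \phi(a) = -\tau_{\lambda^*}(W_\lie{k}(a))\phi(a) + c_h(a)\,\partial_a \phi(a) + \tfrac{\I\xi}{a}(\text{coefficient})\,\phi(a),
\]
where only the $X_{r-1}$-component of $W_\lie{n}(a)$ contributes to the third term, by the choice \eqref{eq:definition of psi} of $\eta$.

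Next I would project each of these three contributions onto $V_{(\lambda+\epsilon_k)^*}^{K/M^\eta}$. The $\tau_{\lambda^*}(W_\lie{k})$- and $\partial_a$-contributions, together with the $h$-normalization, combine (by the matrix coefficient formulas of \S\ref{subsection:Gelfand-Tsetlin basis}) to yield the differential summand with factor $a_{r-1,k}(Q)\bigl(\partial_a - l_{r-1,k}(Q) + c_r\bigr)$, where $c_r = n-1$ for $r=2n$ and $c_r = n$ for $r=2n+1$; this constant records the $\rho_A$-twist built into the normalization of $c_h(a)$. The $X_{r-1}$-contribution $\I\xi/a$ produces a vector still of $K$-type $\lambda^*$; to express it as a $K$-equivariant shift landing in the $(\lambda+\epsilon_k)^*$-component, one must pass through an intermediate Gelfand-Tsetlin shift $\sigma_{r-2,-j}$ and then solve the resulting compatibility equation using Remark~\ref{remark:GT restriction}. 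The solution introduces the denominators $l_{r-2,j}(\sigma_{r-2,-j}Q) - l_{r-1,k}(Q)$ and hence the second summand of \eqref{eq:P_k,2n} and \eqref{eq:P_k,2n+1}.

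The main obstacle is this final compatibility computation, which is pure Gelfand-Tsetlin bookkeeping. For $r=2n$ it is carried out in \cite{T}, so it suffices to observe that for $r=2n+1$ the argument is structurally identical: the index pair $(2n-1, 2n-2)$ is simply replaced by $(2n, 2n-1)$, and the constant $c_r$ changes from $n-1$ to $n$, so that no new ingredient is required beyond this parity-driven relabeling.
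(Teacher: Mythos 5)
Your proposal follows the same route as the paper, which itself supplies no new computation here: it cites \cite{T} for the explicit $r=2n$ formula and declares the $r=2n+1$ case to follow ``just in the same way.'' Your additional structural description of the three Iwasawa contributions (the $\tau_{\lambda^\ast}$-, $\partial_a$-, and $\I\xi/a$-terms) and their recombination in the Gelfand--Tsetlin basis is accurate and a helpful expansion, but the logical core --- defer to \cite{T} and invoke the parity-driven relabeling $(2n-1,2n-2)\mapsto(2n,2n-1)$ --- coincides with what the paper does.
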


First, we complete the proof of 
Proposition~\ref{proposition:condition for sigma, integral,2n}(1) 
by using these operators. 
\begin{lemma}\label{lemma:pi_0,pi_1 sub}
The irreducible module  
$\pi_{0}$ (resp. $\pi_{1}$) is a submodule of $\stWhLgmg$ 
if and only if $\gamma$ 
satisfies \eqref{eq:condition for sigma,2n,0} 
(resp. \eqref{eq:condition for sigma,2n,1}). 
\end{lemma}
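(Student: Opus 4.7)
The plan is to use the shift-operator formulas of Proposition~\ref{proposition:K-type shift}(1) to exhibit Whittaker functions that distinguish which of $\pi_0$, $\pi_1$, $\overline{\pi}_{0,n}$ is the simple socle of $\stWhLgmg$. From the part of Proposition~\ref{proposition:condition for sigma, integral,2n}(1) already proved, if $\pi_0$ or $\pi_1$ is a submodule then the first two lines of \eqref{eq:condition for sigma,2n,0}/\eqref{eq:condition for sigma,2n,1} hold together with $|\gamma_{n-1}|\le\Lambda_{n-1}-1/2$. The three candidate socles have pairwise disjoint $K$-spectra distinguished by the value of $\lambda_n$, namely $\lambda_n\ge\Lambda_n+1/2$ for $\pi_0$, $\lambda_n\le-\Lambda_n-1/2$ for $\pi_1$, and $|\lambda_n|\le\Lambda_n-1/2$ for $\overline{\pi}_{0,n}$, so the remaining task is to pin down the $\lambda_n$-value at the extremal $K$-type of the socle.

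I take $\lambda^{(0)}:=(\Lambda_1-n+3/2,\ldots,\Lambda_{n-1}-1/2,\Lambda_n+1/2)$, the minimum $K$-type of $\pi_0$. A branching analysis $Spin(2n)\to Spin(2n-1)\to Spin(2n-2)$ via Remark~\ref{remark:GT restriction} shows that $\Hom_{M^\eta}(V_{\lambda^{(0)}}^K,V_\gamma^{M^\eta})$ is nonzero under the hypotheses above, so after solving the $Z(\lie g)$-eigenvalue equation and imposing moderate growth inside $C^\infty(A\to V_{(\lambda^{(0)}/\gamma)^*}^{K/M^\eta})$ one obtains a nonzero explicit Whittaker function $\phi(a)=\sum_{Q\in GT((\lambda^{(0)}/\gamma)^*)}c(Q;a)\,Q$. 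Since $\lambda^{(0)}-\epsilon_n\notin\widehat K(\pi_0)$, the condition that this $\phi$ realize the $\pi_0$-socle at the $K$-type $\lambda^{(0)}$ is precisely $P_{-n}\phi\equiv 0$. Plugging into \eqref{eq:P_k,2n} and tracking both the $\partial_a$-summand and the $\xi/a$-summand, $P_{-n}\phi=0$ reduces to the vanishing of the coefficients $a_{2n-1,-n}(Q)$ on the extremal partial GT patterns; from the defining product, this coefficient vanishes precisely when $\gamma_{n-1}$ crosses the threshold $\Lambda_n+1/2$. Hence $P_{-n}\phi\equiv 0$ iff $\gamma_{n-1}\ge\Lambda_n+1/2$, which yields the claim for $\pi_0$. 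An entirely parallel argument with $\lambda^{(1)}:=(\Lambda_1-n+3/2,\ldots,\Lambda_{n-1}-1/2,-\Lambda_n-1/2)$ and the operator $P_n$ gives the corresponding condition $\gamma_{n-1}\le-\Lambda_n-1/2$ for $\pi_1$.

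The main obstacle is the Gelfand-Tsetlin pattern bookkeeping for $P_{\pm n}\phi$ at the extremal $K$-type: one must verify that the $\partial_a$-summand and the $\xi/a$-summand in \eqref{eq:P_k,2n} combine so that the shifted function vanishes identically in $a$ in exactly the asserted range of $\gamma_{n-1}$ and not outside it. Once this single identity is confirmed, the uniqueness of the simple submodule established in Proposition~\ref{proposition:condition for sigma, integral,2n} converts the necessary condition on $\gamma_{n-1}$ into an equivalence, completing the proof.
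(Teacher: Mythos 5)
Your overall direction (look at the minimal $K$-type of $\pi_0$ and use the shift operators of Proposition~\ref{proposition:K-type shift}) matches the paper, but your identification of the mechanism is wrong, and there are two genuine gaps.

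First, you claim that because $\lambda^{(0)}-\epsilon_n\notin\widehat K(\pi_0)$, realizing $\pi_0$ at $\lambda^{(0)}$ is ``precisely'' $P_{-n}\phi\equiv 0$. But $\lambda^{(0)}$ is the coordinate-wise \emph{minimal} $K$-type, so $\lambda^{(0)}-\epsilon_k\notin\widehat K(\pi_0)$ for \emph{every} $k=1,\dots,n$; a lowest-$K$-type Whittaker function must be annihilated by the whole system $P_{-1},\dots,P_{-n}$ (resp.\ $P_{-1},\dots,P_{-n+1},P_n$ for $\pi_1$). This is exactly the characterization from Yamashita~\cite{Y}, which the paper cites and which your argument omits. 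Imposing only $P_{-n}\phi=0$ is an a priori strictly weaker condition than being a lowest-$K$-type vector of $\pi_0$.

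Second, and more fundamentally, the claim that $P_{-n}\phi=0$ ``reduces to the vanishing of the coefficients $a_{2n-1,-n}(Q)$'' and that these ``vanish precisely when $\gamma_{n-1}$ crosses the threshold $\Lambda_n+1/2$'' misidentifies the constraint. The Gelfand--Tsetlin structure constants $a_{2n-1,j}(Q)$ depend only on the integer entries of the pattern (through the $l$-values), not on the sign of $\gamma_{n-1}$, and they do not suddenly vanish as $\gamma_{n-1}$ changes sign within the allowed range $|\gamma_{n-1}|\ge\Lambda_n+1/2$. What actually distinguishes \eqref{eq:condition for sigma,2n,0} from \eqref{eq:condition for sigma,2n,1} in the paper's proof is the \emph{moderate growth} condition: solving the Yamashita system reduces to a single first-order ODE for $c(Q_0;a)$ of the form $\bigl(\partial_a+\mathrm{const}\mp(\sgn\gamma_{n-1})\,\xi/a\bigr)c(Q_0;a)=0$, whose solution $c(Q_0;a)=C\,a^{-\mathrm{const}}e^{\pm(\sgn\gamma_{n-1})\xi/a}$ blows up exponentially as $a\to 0^+$ unless $\gamma_{n-1}$ has the required sign. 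That asymptotic dichotomy, not a vanishing structure constant, is the decisive step. Your logical order (impose moderate growth to get a single $\phi$, then test $P_{-n}\phi=0$ combinatorially) also doesn't quite work: moderate growth alone at a fixed $K$-type does not single out one function, and without the full system and the ODE you cannot see the sign constraint at all.
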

\begin{proof}
Let $\lambda$ be the minimal $K$-type of $\pi_{0}$ (resp. $\pi_{1}$). 
Suppose $\gamma$ satisfies \eqref{eq:condition for sigma,2n,0}
or \eqref{eq:condition for sigma,2n,1}. 
By \cite[Theorem~2.4]{Y}, an embedding of $\pi_{0}$ (resp. $\pi_{1}$)
into $C^{\infty}(G/V; \eta)_{K}$ is characterized by the system of
equations 
$P_{-1} \phi = 0, \dots, P_{-n} \phi = 0$ 
(resp. $P_{-1} \phi = 0, \dots, P_{-n+1} \phi = 0, P_{n} \phi = 0$) 
for $\phi \in 
C^\infty(A \rightarrow V_{(\lambda/\gamma)^{\ast}}^{K/M^{\eta}})$. 
This system of equations is solved in \cite{T} 
(though, notation is a little different). 

Let $Q_{0} = (\gamma^{\ast}, \vect{q}_{2n-2}, \lambda^{\ast})$ be a
vector in $GT((\lambda/\gamma)^{\ast})$ which satisfies 
$q_{2n-2,j}(Q_{0}) = \gamma_{j}^{\ast}$ ($j=1,2,\dots,n-2$) 
and $q_{2n-2,n-1} = |\gamma_{n-1}^{\ast}|$. 
Then the function $\phi$ characterizing the embedding 
of $\pi_{0}$ (resp. $\pi_{1}$) into $\stWhLgmg$ is determined by 
the coefficient function $c(Q_{0}; a)$. 
This function is a solution of the differential equation 
\begin{align*}
& 
\left(
\d_{a} + n - 1 
+ 
\sum_{p=1}^{n} \lambda_{p} 
- \sum_{p=1}^{n-2} \gamma_{p} - |\gamma_{n-1}| 
- 
(\sgn \gamma_{n-1}) \frac{\xi}{a} 
\right) 
c(Q_{0}; a) = 0 
\\
& 
(\mbox{resp. } 
\left(
\d_{a} + n - 1 
+ 
\sum_{p=1}^{n-1} \lambda_{p} - \lambda_{n}
- \sum_{p=1}^{n-2} \gamma_{p} - |\gamma_{n-1}| 
+ 
(\sgn \gamma_{n-1}) \frac{\xi}{a} 
\right) 
c(Q_{0}; a) = 0
). 
\end{align*}
Since we set $\xi > 0$, there exists non-zero moderate growth solution
if and only if $\gamma_{n-1} > 0$ (resp. $\gamma_{n-1} < 0$), 
i.e. $\gamma$ satisfies \eqref{eq:condition for sigma,2n,0} 
(resp. \eqref{eq:condition for sigma,2n,1}). 
\end{proof}

%
\begin{lemma}\label{lemma:condition for vanishing under shift}
Let $\phi$ be an element of 
$C^\infty(A \rightarrow V_{(\lambda/\gamma)^{\ast}}^{K/M^{\eta}})$. 
If $r$, $k$, $\lambda$ and $\gamma$ satisfies one of the following
conditions, then $P_{k} \phi = 0$ implies $\phi = 0$, i.e. 
$P_{k}$ is injective. 
\begin{enumerate}
\item
$r=2n$, $k\in\{2,\dots,n-1\}$, 
$\gamma_{k-1} > \lambda_{k}$. 
\item
$r=2n$, $k\in\{-2,\dots,-(n-1)\}$, 
$\gamma_{|k|-1} < \lambda_{|k|}$. 
\item
$r=2n$, $\lambda_{n}^{\ast} = (-1)^{n} \lambda_{n} > 0$, 
$\lambda_{n}^{\ast} > 
|\gamma_{n-1}^{\ast}| 
= 
|(-1)^{n-1} \gamma_{n-1}|$, 
$k=-(-1)^{n} n$. 
\item
$r=2n$, $\lambda_{n}^{\ast} = (-1)^{n} \lambda_{n} < 0$, 
$-\lambda_{n}^{\ast} > 
|\gamma_{n-1}|
= 
|(-1)^{n-1} \gamma_{n-1}|$, 
$k=(-1)^{n} n$. 
\item
$r=2n+1$, 
$k\in\{2,\dots,n\}$, 
$\gamma_{k-1} > \lambda_{k}$. 
\item
$r=2n+1$, $k\in\{-2,\dots,-n\}$, 
$\gamma_{|k|-1} < \lambda_{|k|}$. 
\end{enumerate}
\end{lemma}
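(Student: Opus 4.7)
The plan is to expand $\phi$ in the Gelfand--Tsetlin basis and reduce $P_{k}\phi=0$ to a coupled first-order ODE system on $A\simeq\R_{>0}$. Writing $\phi(a)=\sum_{Q\in GT((\lambda/\gamma)^{\ast})}c(Q;a)\,Q$ and applying Proposition~\ref{proposition:K-type shift}, we obtain
\[
P_{k}\phi(a)=\sum_{Q_{0}}a_{2n-1,k}(Q_{0})\Bigl[-(\partial_{a}-l_{2n-1,k}(Q_{0})+n-1)c(Q_{0};a)+\tfrac{\I\xi}{a}\sum_{j}\tfrac{a_{2n-2,j}(\sigma_{2n-2,-j}Q_{0})}{l_{2n-2,j}(\sigma_{2n-2,-j}Q_{0})-l_{2n-1,k}(Q_{0})}c(\sigma_{2n-2,-j}Q_{0};a)\Bigr]\sigma_{2n-1,k}Q_{0},
\]
and the pairwise distinctness of the target patterns $\sigma_{2n-1,k}Q_{0}$ (they all share $\vect{q}_{2n-1}=\lambda^{\ast}+\sgn(k)e_{|k|}$ while differing in $\vect{q}_{2n-2}(Q_{0})$) turns $P_{k}\phi=0$ into one scalar equation per $Q_{0}$, with the evident analog when $r=2n+1$.

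The first step is to verify, case by case, that the prefactor $a_{2n-1,k}(Q_{0})$ (or $a_{2n,k}(Q_{0})$) is nowhere zero under the stated hypothesis. The explicit product formula of \cite{GT} quoted after Definition~\ref{definition:GT} shows that this prefactor vanishes precisely when one of the linear factors $l_{2n-1,k}\pm l_{2n-2,m}$ or $l_{2n-1,k}\pm l_{2n,m}$ (with the top-layer slot now filled by the shifted weight $(\lambda+\alpha)^{\ast}$) is zero. In case (1), the hypothesis $\gamma_{k-1}>\lambda_{k}$ combined with the interlacing $q_{2n-2,k-1}\geq\gamma_{k-1}^{\ast}$ forces $l_{2n-2,k-1}>-l_{2n-1,k}$ and rules out the offending vanishing; cases (2), (5), (6) are handled symmetrically with the opposite sign of $k$; and cases (3)--(4) use the sign twist $\lambda_{n}^{\ast}=(-1)^{n}\lambda_{n}$ of Remark~\ref{remark:GT of dual} to align the shift direction $k=\pm(-1)^{n}n$ with the positivity assumption on $\lambda_{n}^{\ast}$.

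Once $a_{2n-1,k}(Q_{0})$ is divided out, the bracketed equation couples $c(Q_{0};a)$ only to $c(Q';a)$ with $Q'=\sigma_{2n-2,-j}Q_{0}\in GT((\lambda/\gamma)^{\ast})$, whose $\vect{q}_{2n-2}$ differs from $\vect{q}_{2n-2}(Q_{0})$ by a single-coordinate shift in a fixed direction. The system is therefore upper-triangular with respect to the partial order on $GT((\lambda/\gamma)^{\ast})$ induced by component-wise comparison of $\vect{q}_{2n-2}$: at any maximal $Q_{0}^{\max}$ the coupling sum is empty and the equation degenerates to the pure ODE $(\partial_{a}-l_{2n-1,k}(Q_{0}^{\max})+n-1)c(Q_{0}^{\max};a)=0$, while for the remaining $Q_{0}$ the right-hand side is built from coefficients corresponding to strictly smaller elements of the poset.

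The main obstacle, and the technical heart of the proof, is to kill the leading layer: the pure ODE above admits the a~priori non-trivial solution $C\,a^{l_{2n-1,k}(Q_{0}^{\max})-n+1}$, so a single equation is not enough. My plan is to exploit that the same coefficient function $c(Q_{0}^{\max};a)$ reappears in the $\I\xi/a$ coupling of the equations associated to the adjacent target vectors $\sigma_{2n-1,k}Q_{0}^{\max,\prime}$ for neighbors $Q_{0}^{\max,\prime}=\sigma_{2n-2,j}Q_{0}^{\max}$; pairing the leading-order ODE with this secondary relation produces an overdetermined two-equation system whose only smooth simultaneous solution is $c(Q_{0}^{\max};a)\equiv 0$. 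Once the maximal layer vanishes, triangular induction downward along the poset forces $c(Q;a)\equiv 0$ for every $Q$, hence $\phi=0$. Pinning down the precise cross-coupling combinatorics that yields this overdetermination, uniformly across cases (1)--(6), is where I expect the main work to lie.
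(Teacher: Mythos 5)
There is a genuine gap, and it sits exactly where you say you expect the main work to lie. Your reduction groups the equation $P_{k}\phi=0$ by target vectors $\sigma_{2n-1,k}Q_{0}$ with $Q_{0}\in GT((\lambda/\gamma)^{\ast})$, and for those targets the equation is indeed a first-order ODE coupled to neighbouring coefficients, so it can never by itself force $c\equiv 0$. But two things go wrong with your plan for closing this. First, the claimed triangularity is false: in \eqref{eq:P_k,2n} the index $j$ runs over $0\leq |j|\leq n-1$ with both signs, so the coefficient of $\sigma_{2n-1,k}Q_{0}$ couples $c(Q_{0};a)$ to \emph{all} of its row-$(2n-2)$ neighbours $\sigma_{2n-2,-j}Q_{0}$, not to neighbours in a fixed direction; there is no maximal pattern whose coupling sum is empty, hence no ``pure ODE at the top'' and no downward triangular induction of the kind you describe. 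Second, even granting your framing, the decisive step (killing the leading layer by ``overdetermination'') is left as a hope rather than an argument, and it is precisely the content of the lemma.

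What the paper actually uses, and what your proposal misses, is that the second sum in \eqref{eq:P_k,2n} ranges over patterns $Q$ subject only to $\sigma_{2n-2,-j}Q\in GT((\lambda/\gamma)^{\ast})$ --- the pattern $Q$ indexing the \emph{target} vector $\sigma_{2n-1,k}Q$ need not itself be a valid source pattern. The hypothesis $\gamma_{k-1}>\lambda_{k}$ is exactly the condition guaranteeing that for $Q_{0}$ with $q_{2n-2,k}(Q_{0})=\lambda_{k}^{\ast}$, the pattern $Q_{1}=\sigma_{2n-2,k}Q_{0}$ (not in $GT((\lambda/\gamma)^{\ast})$) still yields a legitimate target vector $\sigma_{2n-1,k}Q_{1}\in GT((\lambda+\epsilon_{k}/\gamma)^{\ast})$, via Definition~\ref{definition:GT}(3). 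The coefficient of that vector in $P_{k}\phi$ contains no $\partial_{a}$-term (since $c(Q_{1};a)$ does not exist in $\phi$) and equals a nonzero constant times $\I\xi a^{-1}c(Q_{0};a)$; setting it to zero gives $c(Q_{0};a)=0$ \emph{algebraically}, with no ODE to solve. This seeds a downward induction on $\lambda_{k}^{\ast}-q_{2n-2,k}(Q)$: at each step all neighbours of $\sigma_{2n-2,k}Q_{2}$ except $Q_{2}$ itself are already known to vanish, so the coefficient of $\sigma_{2n-1,k}\sigma_{2n-2,k}Q_{2}$ reduces to a nonzero multiple of $c(Q_{2};a)$. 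Without identifying these ``extra'' algebraic equations --- whose existence is precisely what the hypothesis on $\gamma$ buys --- the proof does not close, so the proposal as written is incomplete.
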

\begin{proof}
Since the proofs of these are analogous, we show only (1). 
We will show $c(Q; a) = 0$ by induction 
on $\lambda_{k}^{\ast}-q_{2n-2,k}(Q)$. 

Let $Q_{0}$ be an element of $GT((\lambda/\gamma)^{\ast})$ which
satisfies $q_{2n-2,k}(Q_{0}) = \lambda_{k}^{\ast} = \lambda_{k}$, 
and let 
$Q_{1} := \sigma_{2n-2,k} Q_{0}$. 
Then $Q_{1}$ is not in $GT((\lambda/\gamma)^{\ast})$, 
but $\sigma_{2n-2,-k} Q_{1} = Q_{0} \in GT((\lambda/\gamma)^{\ast})$
and 
$\sigma_{2n-1,k} Q_{1} \in GT((\lambda+\epsilon_{k}/\gamma)^{\ast})$, 
because 
$\gamma_{k-1}^{\ast} = \gamma_{k-1}> \lambda_{k} = \lambda_{k}^{\ast}$
implies that $\sigma_{2n-1,k} Q_{1}$ satisfies the conditions in
  Definition~\ref{definition:GT}~(3): 
$q_{2n-2,k-1}(Q_{1}) \geq 
\gamma_{k-1}^{\ast} \geq \lambda_{k}^{\ast}+1 
= q_{2n-1,k}(\sigma_{2n-1,k} Q_{1}) 
= q_{2n-2,k}(\sigma_{2n-2,k} Q_{1})$. 
Therefore, the term $\sigma_{2n-2,k} Q_{1}$ appears in \eqref{eq:P_k,2n}, 
and its coefficient in \eqref{eq:P_k,2n} is 
\begin{align}
&\frac{a_{2n-2,k}(\sigma_{2n-2,-k} Q_{1})\, a_{2n-1,k}(Q_{1})}
{l_{2n-2,k}(\sigma_{2n-2,-k} Q_{1})-l_{2n-1,k}(Q_{1})} 
c(\sigma_{2n-2,-k} Q_{1}; a)
\notag\\
&\qquad = 
\frac{a_{2n-2,k}(Q_{0}) \, a_{2n-1,k}(\sigma_{2n-2,k} Q_{0})}
{l_{2n-2,k}(Q_{0}) - l_{2n-1,k}(Q_{0})} 
c(Q_{0}; a). 
\label{eq:coeff of c(Q_1;a)}
\end{align}
In general, 
\begin{align*}
\frac{a_{2n-2,j}(Q)\, a_{2n-1,k}(\sigma_{2n-2,j} Q)}
{l_{2n-2,j}(Q) - l_{2n-1,k}(Q)}
=
\frac{a_{2n-2,j}(\sigma_{2n-1,k} Q)\, a_{2n-1,k}(Q)}
{l_{2n-2,j}(Q) - l_{2n-1,k}(Q) - 1}. 
\end{align*}
Here, we used the definition of $a_{i,j}(Q)$. 
Therefore, \eqref{eq:coeff of c(Q_1;a)} is 
\[
\frac{a_{2n-2,k}(\sigma_{2n-1,k} Q_{0}) 
a_{2n-1,k}(Q_{0})}
{(\lambda_{k}+n-k)-(\lambda_{k}+n-k)-1} 
c(Q_{0}; a). 
\]
It is easy to check that 
$a_{2n-2,k}(\sigma_{2n-1,k} Q_{0})\, 
a_{2n-1,k}(Q_{0})$ is not zero. 
So if $P_{k} \phi = 0$, then $c(Q_{0}; a)=0$. 
We have shown that $c(Q; a)$ is zero for those $Q$ which satisfy 
$q_{2n-2,k}(Q)=\lambda_{k}^{\ast}$. 

Assume that $c(Q; a) = 0$ is proved for those $Q$'s which satisfy 
$\lambda_{k}^{\ast}-q_{2n-2,k}(Q)=p$. 
Let $Q_{2}$ be an element of $GT((\lambda/\gamma)^{\ast})$ which
satisfies $\lambda_{k}^{\ast}- q_{2n-2,k}(Q_{2}) = p+1$. 
Set $Q_{3} := \sigma_{2n-2,k}^{+}Q_{2}$. 
This $Q_{3}$ is an element of $GT((\lambda/\gamma)^{\ast})$ 
and it satisfies $\lambda_{k}^{\ast}-q_{2n-2,k}(Q_{3})=p$ 
and 
$\lambda_{k}^{\ast}- q_{2n-2,k}(\sigma_{2n-2,-i}Q_{3}) = p$ 
if $i\not=k$. 
Then by the hypothesis of induction, 
$c(Q_{3}; a) = 0$ and $c(\sigma_{2n-2,-i}Q_{3}; a) = 0$ 
for $i\not=k$. 
Consider the right hand side of \eqref{eq:P_k,2n} for $Q = Q_{3}$. 
The terms other than $c(Q_{2}; a)=c(\sigma_{2n-2,-k}Q_{3}; a)$ are
zero. 
We can easily see that its coefficient 
\[
\frac{a_{2n-2,k}(\sigma_{2n-2,-k} Q_{3})\, a_{2n-1,k}(Q_{3})}
{l_{2n-2,k}(\sigma_{2n-2,-k}Q_{3})-l_{2n-1,k}(Q_{3})}
\]
is not zero. 
Therefore, if $P_{k}\phi(a) = 0$, then $c(Q_{2}; a)=0$. 
This completes the proof. 
\end{proof}

Choose a Cartan subalgebra 
$\lie{h} 
:= 
\bigoplus_{i=1}^{\lfloor(r+1)/2\rfloor}
\C A_{r-2i+3,r-2i+2}$ of $\lie{g}$, and let 
$\gamma: Z(\lie{so}_{r+1}) 
\to U(\lie{h})^{W(\lie{g},\lie{h})}$ be the Harish-Chandra isomorphism. 
The following theorem is proved in \cite{T3}. 
\begin{theorem}
[{\cite[Theorem~1.1, Lemma~3.2 and Proposition~5.3]{T3}}]
\label{theorem:central elements in Ug}
For $u \in \C$, let $C_{r+1}(u)$ be the element in
$Z(\lie{so}_{r+1})$ which satisfies 
%
\begin{equation}\label{eq:image of HC map}
\gamma(C_{r+1}(u)) 
= 
\prod_{i=1}^{\lfloor (r+1)/2 \rfloor}
(u^{2} + A_{r-2i+3,r-2i+2}^{2}). 
\end{equation}
%
%
When $r=2n+1$, 
let $\mathbb{PF}_{2n+2}$ be the element in $Z(\lie{so}_{2n+2})$ which
satisfies 
\[
\gamma(\mathbb{PF}_{2n+2}) 
= 
(-\I)^{n+1} A_{2,1} A_{4,2} \cdots A_{2n+2,2n+1}.
\]
For $\tau_{\lambda} \in \widehat{K}$, define 
\begin{equation}\label{eq:u_k}
u_{k} := 
\begin{cases}
l_{2n-1,k} + 1/2 \ \mbox{ when }\ r=2n \ \mbox{ and } \
|k| < n, 
\\
l_{2n-1,(-1)^{n} k} + 1/2 \ \mbox{ when }\ r=2n \ \mbox{ and } \
k= \pm n, 
\\
l_{2n,k} \ \mbox{ when }\ r=2n+1. 
\end{cases}
\end{equation}
%
\begin{enumerate}
\item
For $k = \pm 1, \dots, \pm \lfloor r/2 \rfloor$, 
there exists a non-zero constant $d_{\lambda,k}$ such that 
\begin{equation}\label{eq:P_-k P_k} 
P_{-k} \circ P_{k} \phi
= d_{\lambda,k}\, L(C_{r+1}(u_{k})) \phi, 
\qquad  
\phi \in 
C^{\infty}(A \rightarrow V_{(\lambda/\gamma)^{\ast}}^{K/M^{\eta}}). 
\end{equation}
\item
When $r=2n+1$, 
there exists a non-zero constant $d_{\lambda}$ such that 
\begin{equation}\label{eq:P_0 and Pfaffian}
P_{0} \phi = d_{\lambda}\, L(\mathbb{PF}_{2n+2})\, \phi, 
\qquad 
\phi \in 
C^{\infty}(A \rightarrow V_{(\lambda/\gamma)^{\ast}}^{K/M^{\eta}}). 
\end{equation}
\end{enumerate}
\end{theorem}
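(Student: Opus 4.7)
The plan is to verify both identities as identities of differential operators on the entire space $C^{\infty}(A \to V_{(\lambda/\gamma)^{\ast}}^{K/M^{\eta}})$, matching their explicit expressions term by term. Both sides are $M^{\eta}$-equivariant, and because $V_{\lambda+\epsilon_{k}}^{K}$ occurs with multiplicity one in $V_{\lambda}^{K}\otimes\lie{s}$, the composition $P_{-k}\circ P_{k}$ is a canonically defined endomorphism of the source space, while $L(C_{r+1}(u_{k}))$ preserves it because $C_{r+1}(u_{k})\in Z(\lie{g})$ commutes with $K$.

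For part (1), I would first use the explicit Gelfand-Tsetlin formulas \eqref{eq:P_k,2n} and \eqref{eq:P_k,2n+1} to expand $P_{-k}\circ P_{k}\,\phi$ as a sum over $Q\in GT((\lambda/\gamma)^{\ast})$. The composition decomposes naturally into three parts: a second-order differential piece of the form $(\partial_{a}-c_{1})(\partial_{a}-c_{2})$ coming from the product of the two derivative terms, a cross piece of order $(\xi/a)\partial_{a}$, and a term of order $\xi^{2}/a^{2}$ from the product of the multiplicative terms. Collapsing these sums uses the symmetry identity between the $a_{i,j}$-coefficients exploited in the proof of Lemma~\ref{lemma:condition for vanishing under shift}, which allows one to identify paths through the two intermediate vectors in $GT((\lambda+\epsilon_{k}/\gamma)^{\ast})$ as a single coefficient on $GT((\lambda/\gamma)^{\ast})$.

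On the other side, $L(C_{r+1}(u_{k}))$ is computed by rewriting $C_{r+1}(u_{k})\in Z(\lie{g})$ in terms of the Iwasawa generators $h$, $X_{i}$, $A_{p,q}$ via the defining recursion for the elements $C_{r+1}(u)$, and then acting term by term according to the natural $\lie{g}$-module structure on $C^{\infty}(A \to V_{(\lambda/\gamma)^{\ast}}^{K/M^{\eta}})$. The choice $u=u_{k}$ in \eqref{eq:u_k} is engineered so that, via the Harish-Chandra formula \eqref{eq:image of HC map}, precisely one factor $u^{2}+A_{r-2i+3,r-2i+2}^{2}$ vanishes on the intermediate $K$-type $\tau_{\lambda+\epsilon_{k}}^{\ast}$; this matches the second-order structure in $\partial_{a}$ of the shift composition. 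The scalar $d_{\lambda,k}$ is then extracted by matching the leading coefficient of $\partial_{a}^{2}$, and its non-vanishing reduces to the non-vanishing of a specific product of $a_{i,j}$ factors.

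For part (2), the matching of the first-order operator $P_{0}$ with $L(\mathbb{PF}_{2n+2})$ is more delicate, since the Pfaffian has degree $n+1$. The point is that the weight $0$ summand in $\Delta_{\lie{s}}$, present only when $r=2n+1$, is picked out by the Pfaffian, whose Harish-Chandra image is the signed product $(-\I)^{n+1}A_{2,1}A_{4,2}\cdots A_{2n+2,2n+1}$. I would test both sides on a single Gelfand-Tsetlin vector for which $P_{0}$ reduces to a product of the $l_{2n-1,j}$ values and $L(\mathbb{PF}_{2n+2})$ to the corresponding product of eigenvalues; equality up to a non-zero multiplicative constant $d_{\lambda}$ follows, and the general identity is then obtained by $K$-equivariance. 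The main obstacle throughout is the intricate combinatorial matching of Gelfand-Tsetlin coefficients, which is the technical content of \cite{T3}.
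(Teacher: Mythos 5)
The paper does not prove this statement: the theorem is attributed to \cite{T3} (Theorem~1.1, Lemma~3.2, Proposition~5.3 there), and the present paper simply imports it. So there is no in-text proof to compare your proposal against; what you are outlining would, if carried out, reproduce the technical content of that cited paper. With that caveat, there are two concrete weaknesses in your sketch.

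First, your explanation of \emph{why} $u_k$ is the right choice is misdirected. You say the choice is engineered so that ``precisely one factor $u^{2}+A_{r-2i+3,r-2i+2}^{2}$ vanishes on the intermediate $K$-type $\tau_{\lambda+\epsilon_{k}}^{\ast}$.'' But the Harish-Chandra image $\gamma(C_{r+1}(u))$ lives in $U(\lie{h})^{W(\lie{g},\lie{h})}$ for a Cartan $\lie{h}$ of $\lie{g}=\lie{so}_{r+1}$, so its evaluation is controlled by the $\lie{g}$-infinitesimal character, not by the $\lie{k}$-highest weight $\lambda+\epsilon_k$; moreover the identity \eqref{eq:P_-k P_k} is asserted on \emph{all} of $C^{\infty}(A \rightarrow V_{(\lambda/\gamma)^{\ast}}^{K/M^{\eta}})$, with no infinitesimal character constraint, so there is no ``vanishing factor'' available in the sense you invoke. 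The role of $u_k$ has to be established at the level of the explicit Iwasawa expansion of $C_{r+1}(u)$ (via the recursion that defines it) matched against the Gelfand--Tsetlin expansion of $P_{-k}\circ P_{k}$, which is a genuine combinatorial computation rather than a vanishing-of-one-factor observation.

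Second, the bootstrap you propose in part (2) is insufficient as stated. Both $P_{0}$ and $L(\mathbb{PF}_{2n+2})$ are first-order (in $\partial_a$) operators on $C^{\infty}(A \rightarrow V_{(\lambda/\gamma)^{\ast}}^{K/M^{\eta}})$, a space that is already the multiplicity/``radial'' space obtained after using $K$-equivariance; there is no remaining $K$-symmetry that forces an endomorphism of this space to be determined by its value on a single Gelfand--Tsetlin vector. To prove \eqref{eq:P_0 and Pfaffian} one must verify the equality of coefficients on every $Q\in GT((\lambda/\gamma)^{\ast})$, including the off-diagonal $\sigma_{2n,0}Q$ terms in \eqref{eq:P_k,2n+1}. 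The high-level plan of matching explicit formulas on both sides is right, and is surely what \cite{T3} does, but the two shortcuts you rely on do not actually close the argument.
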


\section{Determination of composition series}
\label{section:determination of composition series} 

In this section, we determine the socle filtration of $\stWhLgmg$.

\begin{lemma}\label{lemma:zero shifts}
Let $V_{1}$ and $V_{2}$ are irreducible factors in $\stWhLgmg$
which satisfy one of the following conditions: 
\begin{enumerate}
\item\label{lemma:zero shifts(10)}
$V_{1} \simeq V_{2}$, 
but they are different irreducible factors. 
\item \label{lemma:zero shifts(1)}
$r=2n$, $\gamma$ satisfies \eqref{eq:condition for sigma,2n,0}, 
$V_{1} \simeq \pi_{0}$, 
$V_{2} \simeq \overline{\pi}_{0,n}$. 
\item \label{lemma:zero shifts(2)}
$r=2n$, $\gamma$ satisfies \eqref{eq:condition for sigma,2n,0}, 
$V_{1} \simeq \overline{\pi}_{0,n}$, 
$V_{2} \simeq \pi_{1}$. 
\item \label{lemma:zero shifts(3)}
$r=2n$, $\gamma$ satisfies \eqref{eq:condition for sigma,2n,1}, 
$V_{1} \simeq \pi_{1}$, 
$V_{2} \simeq \overline{\pi}_{0,n}$. 
\item \label{lemma:zero shifts(4)}
$r=2n$, $\gamma$ satisfies \eqref{eq:condition for sigma,2n,1}, 
$V_{1} \simeq \overline{\pi}_{0,n}$, 
$V_{2} \simeq \pi_{0}$. 
\item \label{lemma:zero shifts(5)}
$r=2n$, $i \in \{2, \dots, n-1\}$, 
$\gamma$ satisfies \eqref{eq:condition for sigma,2n,0i}, 
$V_{1} \simeq \overline{\pi}_{0,i}$, 
$V_{2} \simeq \overline{\pi}_{0,i-1}$ or $\overline{\pi}_{0,i+1}$. 
\item \label{lemma:zero shifts(6)}
$r=2n$, 
$\gamma$ satisfies \eqref{eq:condition for sigma,2n,0i} for $i=n$, 
$V_{1} \simeq \overline{\pi}_{0,n}$, 
$V_{2} \simeq \overline{\pi}_{0,n-1}$, $\pi_{0}$ or $\pi_{1}$. 
\item \label{lemma:zero shifts(8)}
$r=2n+1$, $i \in \{2, \dots, n\}$, 
$\gamma$ satisfies \eqref{eq:condition for sigma,2n+1,0i}, 
$V_{1} \simeq \overline{\pi}_{0,i}$, 
$V_{2} \simeq \overline{\pi}_{0,i-1}$ or $\overline{\pi}_{0,i+1}$. 
\item \label{lemma:zero shifts(9)}
$r=2n+1$, 
$\gamma$ satisfies \eqref{eq:condition for sigma,2n+1,0i} for $i=n+1$, 
$V_{1} \simeq \overline{\pi}_{0,n+1}$, 
$V_{2} \simeq \overline{\pi}_{0,n}$. 
\end{enumerate}
Then there is no non-zero $\lie{g}$-action in
$\stWhLgmg$ which sends an element of $V_{1}$ to $V_{2}$. 
\end{lemma}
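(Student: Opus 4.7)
The plan is as follows. Since $\lie{k}$ preserves $K$-isotypic components of $\stWhLgmg$, in order to rule out a non-zero $\lie{g}$-action from $V_{1}$ to $V_{2}$ it suffices to rule out any non-zero shift operator $P_{k}$ of Proposition~\ref{proposition:K-type shift} carrying the $V_{1}$-part of some $K$-isotypic subspace into the $V_{2}$-part of the shifted one. Using the $K$-spectra from Theorems~\ref{theorem:K-spectra,2n} and \ref{theorem:K-spectra,2n+1} together with the admissibility conditions \eqref{eq:condition for sigma,2n,0}--\eqref{eq:condition for sigma,2n+1,0i}, I would first determine, case by case, which shift directions $k$ can even potentially take a $\tau_{\lambda}\in\widehat{K}(V_{1})$ to a $\tau_{\lambda+\epsilon_{k}}\in\widehat{K}(V_{2})$; outside this short list $P_{k}$ lands in the $V_{1}$-isotypic component of the target and the claim is automatic.

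For each surviving pair $(\tau_{\lambda},k)$, I would apply Theorem~\ref{theorem:central elements in Ug}(1): the composition $P_{-k}\circ P_{k}$ acts on $C^{\infty}(A\to V_{(\lambda/\gamma)^{\ast}}^{K/M^{\eta}})$ as the scalar $d_{\lambda,k}\,\chi_{\Lambda}(C_{r+1}(u_{k}))$, and by \eqref{eq:image of HC map} this equals $d_{\lambda,k}\prod_{i}(u_{k}^{2}-\Lambda_{i}^{2})$. A direct computation using Definition~\ref{definition:GT} shows that for $\lambda$ at the relevant boundary of $\widehat{K}(V_{1})$, the value $u_{k}$ coincides with $\pm\Lambda_{j}$ for some $j$ determined by the chosen pair, so this scalar vanishes. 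Meanwhile, the hypotheses \eqref{eq:condition for sigma,2n,0}--\eqref{eq:condition for sigma,2n+1,0i} on $\gamma$ imply that the shifted weight $\lambda+\epsilon_{k}$ meets the hypothesis of Lemma~\ref{lemma:condition for vanishing under shift} applied to $P_{-k}$, so $P_{-k}$ is injective on the full target section space. The identity $P_{-k}\circ P_{k}=0$ combined with this injectivity then forces $P_{k}$ itself to vanish on the $V_{1}$-piece of $\tau_{\lambda}$, which is the desired conclusion.

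Case~(1), in which $V_{1}\simeq V_{2}$ are isomorphic but distinct composition factors, is covered by the same vanishing-plus-injectivity argument, since both copies carry the same central character and the same $K$-spectrum; any would-be $\lie{g}$-arrow between them restricts at each admissible $K$-type to some $P_{k}$ which is then excluded as above. When $r=2n+1$ and the relevant shift is $P_{0}$, Theorem~\ref{theorem:central elements in Ug}(2) replaces part~(1): $P_{0}$ is a non-zero scalar multiple of $L(\mathbb{PF}_{2n+2})$, which lies in $Z(\lie{g})$ and therefore preserves every $K$-isotypic component; combined with the fact that at the $K$-types under scrutiny the $V_{1}$- and $V_{2}$-pieces sit in different isotypic subspaces, this rules out a non-zero $P_{0}$-contribution from $V_{1}$ to $V_{2}$.

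The main obstacle is purely combinatorial: for each of the nine cases one must pinpoint the correct boundary $K$-type $\tau_{\lambda}$, express $u_{k}$ from Definition~\ref{definition:GT} (as $l_{2n-1,k}+\frac{1}{2}$ or $l_{2n,k}$), and verify both the equality $u_{k}=\pm\Lambda_{j}$ and the applicability of Lemma~\ref{lemma:condition for vanishing under shift} to $P_{-k}$ on the shifted weight. The conditions \eqref{eq:condition for sigma,2n,0}--\eqref{eq:condition for sigma,2n+1,0i} on $\gamma$ appear to be tailored precisely so that these two pieces dovetail, but writing out each case will still require some careful bookkeeping. I do not expect any conceptually new ingredient beyond Proposition~\ref{proposition:K-type shift}, Lemma~\ref{lemma:condition for vanishing under shift}, and Theorem~\ref{theorem:central elements in Ug}.
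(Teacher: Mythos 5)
Your plan for cases (2)--(9) is essentially the paper's own argument: for the relevant boundary $K$-type $\tau_{\lambda}$ and shift direction $k$ one shows $\chi_{\Lambda}(C_{r+1}(u_{k}))=0$, so $P_{-k}\circ P_{k}=0$, and then Lemma~\ref{lemma:condition for vanishing under shift} applied to the opposite shift forces $P_{k}=0$. Your treatment of $P_{0}$ via the Pfaffian is also the paper's.

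The genuine gap is case~(1). You assert it is ``covered by the same vanishing-plus-injectivity argument,'' but that is exactly the case where the vanishing fails. When $V_{1}\simeq V_{2}$, the shifted weight $\lambda'=\lambda+\epsilon_{k}$ lies in $\widehat{K}(V_{2})=\widehat{K}(V_{1})$, i.e.\ it is an \emph{interior} $K$-type of $V_{1}$, not a boundary one; and the conditions \eqref{eq:central character is zero(1)}--\eqref{eq:central character is zero(4)} that characterize $u_{k}=\pm\Lambda_{j}$ are precisely the conditions ensuring $\lambda+\epsilon_{k}\notin\widehat{K}(V_{1})$. So for case~(1) the scalar $\chi_{\Lambda}(C_{r+1}(u_{k}))$ is \emph{nonzero}, $P_{-k}\circ P_{k}$ is a nonzero multiple of the identity on the $\tau_{\lambda}$-isotypic space, and your planned computation would simply show that $P_{-k}\circ P_{k}\neq 0$, leaving you with nothing to conclude. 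The paper's actual argument for case~(1) is of a different kind: the nonvanishing of the scalar means that if $P_{k}$ sent a $V_{1}$-element nontrivially into $V_{2}$, then $P_{-k}$ would send it nontrivially back into $V_{1}$; having $\lie{g}$-arrows both $V_{1}\to V_{2}$ and $V_{2}\to V_{1}$ between two \emph{distinct} composition factors is incompatible with the ordering imposed by any filtration of $\stWhLgmg$, hence $P_{k}$ cannot cross from $V_{1}$ to $V_{2}$. You need to replace your case~(1) paragraph with this nonvanishing/two-way-arrow argument, and separately keep the $\mathbb{PF}_{2n+2}$ observation for the $k=0$ sub-case (which only arises inside case~(1), since in cases (2)--(9) the $K$-spectra of $V_{1}$ and $V_{2}$ are disjoint).
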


\begin{proof}
If there is a $\lie{g}$-action sending an element of 
$V_{1}$ to $V_{2}$, 
then the $K$-spectra $\widehat{K}(V_{1})$ and 
$\widehat{K}(V_{2})$ should be adjacent, i.e. there
should be $K$-types 
$\tau_{\lambda} \in \widehat{K}(V_{1})$ 
and $\tau_{\lambda'} \in \widehat{K}(V_{2})$ such
that $\lambda'-\lambda$ is a weight of $\lie{s}$. 
We set $\epsilon_{k} = \lambda' - \lambda$. 

First, we show \eqref{lemma:zero shifts(10)}. 
Recall the discussion in \S~\ref{subsection:shift operators}. 
The non-zero $\lie{s}$-action which sends an element of
$V_{\lambda}^{K} \subset V_{1}$ 
to 
$V_{\lambda'}^{K} \subset V_{2}$ is realized by the shift operator
$P_{k} \phi \not= 0$, 
where 
$\phi$ is a non-zero element in 
$C^{\infty}(A \rightarrow V_{(\lambda/\gamma)^{\ast}}^{K/M^{\eta}})$
corresponding to an element in 
$\Hom_{K}(V_{\lambda}^{K}, \stWhLgmg)$. 

If $r=2n+1$ and $k=0$, 
then $P_{0} \phi = d_{\lambda}\, L(\mathbb{PF}_{2n+2}) \phi$
by \eqref{eq:P_0 and Pfaffian}. 
Since $\mathbb{PF}_{2n+2}$ is a central element, 
$P_{0} \phi$ is a constant multiple of $\phi$. 
So it realizes the $K$-type $V_{\lambda}^{K} \subset V_{1}$ or it is a
zero element. 
In either case, $P_{0} \phi$ does not realize the
$K$-type $V_{\lambda'}^{K} \subset V_{2} \not= V_{1}$. 

Suppose that $k$ is not zero. 
Consider the shift $P_{-k} \circ P_{k} \phi$. 
Lemma~\ref{theorem:central elements in Ug}~(1) asserts 
that 
$P_{-k} \circ P_{k} \phi
= 
d_{\lambda,k} \, L(C_{r+1}(u_{k})) \phi$. 
Since $C_{r+1}(u_{k}) \in Z(\lie{g})$, it acts by the scalar 
$\chi_{\Lambda}(C_{r+1}(u_{k}))$. 
By \eqref{eq:image of HC map}, 
the image of Harish-Chandra map of $C_{r+1}(u_{k})$ is 
\[
\chi_{\Lambda}(C_{r+1}(u_{k})) 
= 
\prod_{i=1}^{\lfloor (r+1)/2 \rfloor} 
(u_{k}^{2} - \Lambda_{i}^{2}). 
\]
By the definition \eqref{eq:u_k} of $u_{k}$, 
the scalar $\chi_{\Lambda}(C_{r+1}(u_{k}))$ is zero if and only if one
of the following conditions is satisfied: 
\begin{align}
& r = 2n, \ k > 0, \ 
\lambda_{k} = \Lambda_{i} - n + k - 1/2 \ 
(\exists i\in \{1,2,\dots,n\}), 
\label{eq:central character is zero(1)}
\\
& r = 2n, \ k < 0, \ 
\lambda_{|k|} = \Lambda_{i} - n + |k| + 1/2 \ 
(\exists i\in \{1,2,\dots,n\}), 
\label{eq:central character is zero(2)}
\\
& r = 2n, \ k = n, \ 
\lambda_{n} = \pm \Lambda_{i}-1/2 \ 
(\exists i\in \{1,2,\dots,n\}), 
\\
& r = 2n, \ k = -n, \ 
\lambda_{n} = \pm \Lambda_{i}+1/2 \ 
(\exists i \in \{1,2,\dots,n\}), 
\\
& r = 2n+1, \ k > 0, \ 
\lambda_{k} = \Lambda_{i} - n + k - 1 \ 
(\exists i \in \{1,2,\dots,n\})\ \mbox{ or } |\Lambda_{n+1}|,  
\label{eq:central character is zero(3)}
\\
& r = 2n+1, \ k < 0, \ 
\lambda_{|k|} = \Lambda_{i} - n + |k|  \ 
(\exists i\in \{1,2,\dots,n\}) \mbox{ or } |\Lambda_{n+1}|+1. 
\label{eq:central character is zero(4)}
\end{align}

Recall the $K$-spectra of irreducible $\brgK$-modules 
(Theorems~\ref{theorem:K-spectra,2n}, \ref{theorem:K-spectra,2n+1}). 
If $\lambda$ and $k$ satisfy one of 
\eqref{eq:central character is zero(1)} 
-- 
\eqref{eq:central character is zero(4)}, 
then the $K$-type $V_{\lambda'}^{K} = V_{\lambda+\epsilon_{k}}^{K}$
is not a $K$-type of $V_{1} \simeq V_{2}$. 
Therefore, if $V_{1} \simeq V_{2}$ and $P_{k} \phi \not= 0$, 
then $P_{-k} \circ P_{k} \phi$ is a non-zero multiple of $\phi$. 
This is impossible since $V_{1}$ and $V_{2}$ are different
irreducible subquotients and there exist non-zero $\lie{s}$-actions
sending an element in $V_{1}$ to $V_{2}$ and vice versa. 
Therefore, \eqref{lemma:zero shifts(10)} is proved.

Let us show the case \eqref{lemma:zero shifts(9)}. 
By Theorem~\ref{theorem:K-spectra,2n+1}, 
two $K$-types $\lambda \in \widehat{K}(V_{1})$ and 
$\lambda' \in \widehat{K}(V_{2})$ are adjacent if and
only if $\lambda_{n} = \Lambda_{n}$,
$\lambda_{n}'=\Lambda_{n}-1$ 
and $\lambda_{p}=\lambda_{p}'$ for $p=1,\dots,n-1$. 
In this case, the shift operator sending $V_{\lambda}^{K}$ to
$V_{\lambda'}^{K}$ is $P_{-n}$. 
By \eqref{eq:central character is zero(4)}, 
$P_{n} \circ P_{-n} \phi = 0$. 
We know that $\gamma_{n-1} \geq \Lambda_{n}$ 
since $\gamma$ satisfies \eqref{eq:condition for sigma,2n+1,0i} for
$i=n+1$. 
On the other hand, 
$\lambda_{n}' = \Lambda_{n}-1$. 
Then the condition of 
Lemma~\ref{lemma:condition for vanishing under shift}~(5) 
for $k=n$ (and $\lambda$ is replaced by $\lambda'$) is satisfied. 
So $P_{n}$ is injective. 
It follows that $P_{-n} \phi = 0$, so that there is no non-zero
$\lie{g}$-action sending $V_{1}$ to $V_{2}$. 
This proves \eqref{lemma:zero shifts(9)}. 

We can show 
\eqref{lemma:zero shifts(1)}--\eqref{lemma:zero shifts(8)} 
analogously. 
\end{proof}

\begin{corollary}\label{corollary:pi_ij is multiplicity free}
Suppose that the unique irreducible submodule of
$\stWhLgmg$ is $\pi$. 
Then the multiplicity of $\pi$ in $\stWhLgmg$ is one. 
\end{corollary}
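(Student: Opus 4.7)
The plan is to argue by contradiction: assume $\pi$ occurs with multiplicity at least two as a composition factor of $V := \stWhLgmg$. Let $0 = W_{0} \subset W_{1} \subset W_{2} \subset \cdots$ be the socle filtration of $V$. Since $\pi$ is the unique simple submodule, with multiplicity one in $\mathrm{soc}(V)$, we have $W_{1} \simeq \pi$. Because $\pi$ appears as a composition factor of $V$ at least twice, I would pick the minimal $j \geq 2$ such that $\pi$ is a composition factor of the semisimple layer $W_{j}/W_{j-1}$. Then $\pi$ embeds into $V/W_{j-1}$, and lifting back one obtains a submodule $W_{j-1} \subsetneq T \subseteq V$ with $T/W_{j-1} \simeq \pi$.

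Next I would consider the short exact sequence
\[
0 \longrightarrow W_{j-1}/W_{j-2} \longrightarrow T/W_{j-2} \longrightarrow \pi \longrightarrow 0
\]
(setting $W_{-1} := 0$ when $j=2$). If this sequence splits, then $\pi$ embeds into $V/W_{j-2}$ as a simple submodule, hence into $\mathrm{soc}(V/W_{j-2}) = W_{j-1}/W_{j-2}$. For $j \geq 3$ this violates the minimality of $j$, while for $j = 2$ it exhibits a second simple submodule of $V$ isomorphic to $\pi$, contradicting the hypothesis that $\pi$ is the unique simple submodule of $V$. So the extension is non-split, and by Definition~\ref{definition:diagram of composition series} there is a non-zero $\lie{g}$-action from the top copy of $\pi$ into some irreducible constituent $Y$ of the semisimple layer $W_{j-1}/W_{j-2}$.

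When $j = 2$, necessarily $Y \simeq \pi$ but $Y$ is a distinct composition factor, so the arrow $\pi \to Y$ is directly forbidden by part (1) of Lemma~\ref{lemma:zero shifts}. When $j \geq 3$, the minimality of $j$ forces $Y \not\simeq \pi$, and the remaining items (2)--(9) of Lemma~\ref{lemma:zero shifts} are precisely arranged to forbid every arrow $\pi \to Y$ in each of the five situations catalogued in Proposition~\ref{proposition:composition factors-1}. In either case one contradicts Lemma~\ref{lemma:zero shifts}, so the multiplicity of $\pi$ in $V$ must be one.

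I expect the case $j = 2$ to be the clean part, relying only on item (1) of Lemma~\ref{lemma:zero shifts}. The main obstacle is the verification in the case $j \geq 3$ that Lemma~\ref{lemma:zero shifts} genuinely closes off every possible arrow $\pi \to Y$: for the ``middle'' situations (2)--(5) of Proposition~\ref{proposition:composition factors-1} the relevant items of Lemma~\ref{lemma:zero shifts} rule out each such arrow immediately, while situation (1) of that proposition (with composition factors $\pi_{0}, \pi_{1}, \overline{\pi}_{0,n}$) requires combining several parts of Lemma~\ref{lemma:zero shifts}, together with a chain-of-arrows analysis from the cosocle down to the socle of $T$, to eliminate the residual configurations.
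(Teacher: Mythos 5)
Your argument follows the same strategy as the paper's: assume a second copy of $\pi$ exists, produce a forced arrow $\pi \to Y$ to some composition factor $Y$ strictly below it, and then use Lemma~\ref{lemma:zero shifts} to rule out that arrow. The paper's own proof is considerably terser — it simply asserts that a non-socle copy $V_1 \simeq \pi$ admits an arrow $V_1 \to V_2$ and that the pair $(V_1,V_2)$ is handled by Lemma~\ref{lemma:zero shifts} — whereas you supply the scaffolding (socle filtration, minimal $j$, lift $T$, non-split extension) that justifies the existence of such an arrow and pins down where $Y$ lives, which is a more careful rendering of the same idea.

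One remark on the residual case you flag in situation~(1) of Proposition~\ref{proposition:composition factors-1}: when the factors are $\pi_0, \pi_1, \overline{\pi}_{0,n}$ with $\pi = \pi_0$ (say), the only pair not explicitly listed in Lemma~\ref{lemma:zero shifts} is $\pi_0 \to \pi_1$. You propose to dispatch it by ``combining several parts of Lemma~\ref{lemma:zero shifts} together with a chain-of-arrows analysis,'' but the cleaner observation, which closes the gap directly, is that $\pi_0$ and $\pi_1$ have no $K$-adjacent $K$-types: by Theorem~\ref{theorem:K-spectra,2n}, every $\lambda \in \widehat{K}(\pi_0)$ has $\lambda_n \geq \Lambda_n + 1/2$ and every $\lambda' \in \widehat{K}(\pi_1)$ has $\lambda_n' \leq -\Lambda_n - 1/2$, so $\lambda - \lambda'$ can never lie in $\Delta_{\lie{s}}$. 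Hence no arrow $\pi_0 \to \pi_1$ (or $\pi_1 \to \pi_0$) is possible at all; no chain analysis is needed. (One could also invoke Theorem~\ref{theorem:Ext} — both discrete series have length $0$ — to see they cannot sit in adjacent socle layers.) With that substitution your argument is complete and matches the paper's.
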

\begin{proof}
Assume that there exists a composition factor $V_{1}$ 
which is isomorphic to $\pi$ but is not the unique irreducible
submodule. 
Then there exists an irreducible subquotient $V_{2}$ of $\stWhLgmg$
such that $V_{1} \to V_{2}$. 
By Proposition~\ref{proposition:composition factors-1}, 
we know that the pair $(V_{1}, V_{2})$ satisfies one of the conditions
in Lemma~\ref{lemma:zero shifts}. 
But this lemma tells us that there is no non-zero $\lie{g}$-action
sending an element of $V_{1}$ to $V_{2}$. 
Therefore, $V_{1} \not\rightarrow V_{2}$. 
\end{proof}

In order to determine the second and higher floors of $\stWhLgmg$, 
the next theorem is very useful. 

\begin{theorem}[{\cite[Theorem~9.5.1]{V2}}]
\label{theorem:Ext}
In the setting of this paper, 
suppose that irreducible $\brgK$-modules $X$ and $Y$ are not isomorphic. 
Then $\Ext_{\gK}^{1}(X,Y) \not= 0$ only if 
$\ell(X) - \ell(Y) \equiv 1 \mod 2$. 
\end{theorem}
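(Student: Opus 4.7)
The plan is to derive the parity statement by dimension-shifting against standard modules, whose $\Ext$-groups are computable via cohomological induction and already satisfy the parity-vanishing condition. For each irreducible $Y$, one fixes a standard module $M(y)$ whose unique irreducible quotient is $Y$ (with $\ell$ of its parameter equal to $\ell(Y)$), giving a short exact sequence $0 \to N(y) \to M(y) \to Y \to 0$ whose kernel $N(y)$ has composition factors of strictly smaller length. Applying $\Hom_{\gK}(X,-)$ produces a long exact sequence
\begin{equation*}
\cdots \to \Ext^{i}_{\gK}(X, M(y)) \to \Ext^{i}_{\gK}(X, Y) \to \Ext^{i+1}_{\gK}(X, N(y)) \to \cdots
\end{equation*}
sandwiching $\Ext^{1}_{\gK}(X,Y)$ between Ext-groups against $M(y)$ and against $N(y)$.

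The key input is the parity statement for standard modules: $\Ext^{i}_{\gK}(X, M(y)) = 0$ unless $i \equiv \ell(X)-\ell(y) \pmod{2}$. One obtains this by realizing $M(y)$ via cohomological induction from a $\theta$-stable parabolic $\lie{q} = \lie{l} + \lie{u}$; the induction functor is concentrated in a single cohomological degree $S = \dim(\lie{u} \cap \lie{k})$, and a Shapiro-type argument relates
\begin{equation*}
\Ext^{i}_{\gK}(X, M(y)) \; \simeq \; \Ext^{i-S}_{(\lie{l}, L \cap K)}\bigl(\text{Levi data of } X, \, Z\bigr),
\end{equation*}
where $Z$ is the inducing module. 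Iterating down to standard principal series, where the $\Ext$ computation reduces to Lie algebra cohomology of $\lie{n}$ and the parity constraint is automatic, then gives the claim.

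Combining these ingredients by induction on $\ell(Y)$: the long exact sequence forces vanishing of $\Ext^{1}_{\gK}(X,Y)$ in the wrong parity, provided both $\Ext^{1}_{\gK}(X, M(y))$ and the relevant $\Ext^{2}_{\gK}(X, N(y))$ vanish by parity, the former from the standard-module statement and the latter from the inductive hypothesis. The main obstacle is controlling the parities of the composition factors $V$ of $N(y)$: one needs $\ell(V) \equiv \ell(Y)+1 \pmod{2}$ for every such $V$, so that the inductive hypothesis produces a vanishing of the correct parity. This is the nontrivial content of the argument and ultimately rests on KLV combinatorics, namely the fact that the nonzero coefficients of a Kazhdan-Lusztig-Vogan polynomial $P_{v,y}(q)$ all lie in degrees of a single parity, matching $\ell(y)-\ell(v) \pmod{2}$. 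Granting this, the induction closes and the parity statement propagates from standard modules to all irreducibles.
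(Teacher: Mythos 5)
The paper does not prove this statement at all --- it is quoted verbatim from Vogan's book \cite[Theorem~9.5.1]{V2}, so there is no internal proof to compare with; what you are really being asked to reconstruct is Vogan's argument. Your overall scaffolding (parity-vanishing of $\Ext$ against standard modules via Shapiro/cohomological induction, then a long exact sequence and an induction on length) is a reasonable strategy, and the parity-vanishing claim for $\Ext^{i}_{\gK}(X,M(y))$ is plausible. The problem is the step you yourself flag as ``the nontrivial content'': you need every composition factor $V$ of $N(y)=\ker(M(y)\twoheadrightarrow Y)$ to satisfy $\ell(V)\equiv\ell(Y)+1\pmod 2$, so that the inductive hypothesis kills $\Ext^{2}(X,N(y))$. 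This is false. Already for the Verma module attached to the longest Weyl element in the $\lie{sl}_{3}$ block (equivalently, for a complex group treated as a real group, where the Harish-Chandra category is a block of category $\mathcal{O}$), $N(w_{0})$ has composition factors of lengths $0,1,1,2,2$, which have both parities. The KLV parity statement you invoke does not repair this: it asserts that the $q$-degrees in which $P_{v,y}(q)$ is supported lie in a single coset of $2\Z$, which constrains the \emph{Jantzen/radical layer} in which $L(v)$ sits, not the parity of $\ell(v)$ itself; composition factors of different length parities do coexist inside $N(y)$, distributed across different radical layers. Because of this, the long exact sequence does not force $\Ext^{1}(X,Y)\to\Ext^{2}(X,N(y))$ to vanish, and the induction does not close as written.

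There is also an anachronism worth noting: \cite{V2} was published in 1981, before Kazhdan--Lusztig--Vogan polynomials existed, so Vogan's proof of Theorem~9.5.1 cannot rest on KLV combinatorics. His argument proceeds instead through a finer analysis involving translation functors and the socle/cosocle structure of standard modules (in effect, an induction tracking radical layers rather than the whole submodule $N(y)$ at once), which is exactly what is needed to make the parity bookkeeping correct. If you want to salvage your outline, you should replace the single short exact sequence $0\to N(y)\to M(y)\to Y\to 0$ by the full radical filtration of $M(y)$ and run a double induction (on $\ell(y)$ and on the filtration depth), using the parity of the layers --- not of the individual composition factors of $N(y)$ --- as the controlling datum.
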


This theorem imposes a parity structure on $\stWhLgmg$. 
We know that the socle of it consists of a single irreducible module. 
If the length of the module in the socle is even (resp. odd), 
then by this theorem, the lengths of the irreducible factors in the
second floor are odd (resp. even), the third floor even (resp. odd), 
and so on. 

\vspace{2mm}

\noindent
{\it 
Proofs of Theorems~\ref{theorem:main, 2n}, \ref{theorem:main, 2n+1}.}
The statements (1) in these theorems are proved in 
Propositions~\ref{proposition:condition for sigma, integral,2n}, 
\ref{proposition:condition for sigma, integral,2n+1}. 

Let us consider the case when $r=2n$ and $\gamma$ satisfies 
\eqref{eq:condition for sigma,2n,0}. 
In this case, 
Proposition~\ref{proposition:condition for sigma, integral,2n} says 
that $\pi_{0}$ is the unique simple submodule of
$\stWhLgmg$, 
and Proposition~\ref{proposition:composition factors-1}~(1) says that
only $\pi_{0}, \pi_{1}$ or $\overline{\pi}_{0,n}$ can be a 
composition factor of $\stWhLgmg$. 
By Corollary~\ref{corollary:pi_ij is multiplicity free}, 
the multiplicity of $\pi_{0}$ in $\stWhLgmg$ is just one. 
By Theorem~\ref{theorem:classification of irr modules, 2n}, 
$\ell(\pi_{0}) = \ell(\pi_{1}) = 0$ 
and $\ell(\overline{\pi}_{0,n}) = 1$. 
Therefore, Theorem~\ref{theorem:Ext} implies that 
the second floor of $\stWhLgmg$ is a multiple of 
$\overline{\pi}_{0,n}$, and the third floor of it is a multiple of
$\pi_{1}$, and so on. 
We know from Proposition~\ref{proposition:at least one} 
that the multiplicities of $\overline{\pi}_{0,n}$ and $\pi_{1}$ 
in $\stWhLgmg$ are at least one. 
Therefore, the multiplicity of $\overline{\pi}_{0,n}$
(resp. $\pi_{1}$) in the second (resp. third) floor is at least one. 
We can show that the multiplicity of $\overline{\pi}_{0,n}$ 
(resp. $\pi_{1}$) in the second (resp. third) floor is just one, 
in the same way as the proof of \cite[Lemma~5.14]{T2}. 

Assume that there exists non-zero fourth floor in $\stWhLgmg$. 
Then there exists a $\lie{g}$ action which sends an 
element in the fourth floor to the third floor. 
But the fourth floor is a multiple of $\overline{\pi}_{0,n}$ and the
third floor is isomorphic to $\pi_{1}$. 
This contradicts Lemma~\ref{lemma:zero shifts} (3). 
Therefore, there is no fourth floor in $\stWhLgmg$.

The proof of the case when $\gamma$ satisfies 
\eqref{eq:condition for sigma,2n,1} is just the same as above. 

The proofs of Theorem~\ref{theorem:main, 2n}\, (3), (4) and 
Theorem~\ref{theorem:main, 2n+1}, (2), (3) are almost the same
as above (easier). 
For example, suppose $r=2n$ and $\gamma$ satisfies 
\eqref{eq:condition for sigma,2n,0i}, $i \in \{2,\dots,n-1\}$. 
Then $\overline{\pi}_{0,i}$ is the unique simple submodule of
$\stWhLgmg$. 
A composition factor of $\stWhLgmg$ is isomorphic to
$\overline{\pi}_{0,i}$, $\overline{\pi}_{0,i-1}$ or 
$\overline{\pi}_{0,i+1}$. 
Since $\ell(\overline{\pi}_{0,k}) = n-k+1$, $k=i-1, i, i+1$, 
and since the multiplicity of $\overline{\pi}_{0,i}$ 
in $\stWhLgmg$ is just one, 
the second floor of $\stWhLgmg$ is a direct sum of multiples of 
$\overline{\pi}_{0,i-1}$ and $\overline{\pi}_{0,i+1}$, 
and there is no higher floor. 
The multiplicity freeness of the factors in the second floor is proved 
in the same way as the proof of \cite[Lemma~5.14]{T2}. 
\qed


\begin{thebibliography}{99} 

\bibitem{C} Collingwood, D. H. : 
{\it Representations of rank one Lie groups}, 
Research Notes in Mathematics, 137. Pitman (Advanced Publishing
Program), Boston, MA, 1985. 

\bibitem{GT} Gelfand, I. M.; Tsetlin, M. L.:
Finite-dimensional representations of the group of orthogonal matrices. 
Doklady Akad. Nauk SSSR \textbf{71} (1950), 1017--1020 (Russian). 
English transl. in: I. M. Gelfand, {\it Collected Papers}, 
vol. II, Springer Verlag, Berlin, 1988. 

\bibitem{M1}
Matumoto, H. : 
Whittaker vectors and the Goodman-Wallach operators, 
Acta math. \textbf{161} (1988), 183--241. 

\bibitem{T}
Taniguchi, K.: 
Discrete Series Whittaker Functions of $SU(n,1)$ and $Spin(2n,1)$, 
J. Math. Sci. Univ. Tokyo \textbf{3} (1996), 331--377. 

\bibitem{T2}
Taniguchi, K.: 
On the composition series of the standard Whittaker $\brgK$-modules, 
to appear in Trans. Amer. Math. Soc., 
arXiv:1102.4966. 

\bibitem{T3} 
Taniguchi, K.: 
A construction of generators of $Z(\lie{so}_{n})$, 
preprint, 
arXiv:1212.0607. 

\bibitem{V2} 
Vogan, D. A.: 
{\it Representations of real reductive Lie groups}. 
Progress in Mathematics, 15. Birkha\"user, Boston, Mass., 1981. 


\bibitem{W} 
Wallach, N. R. : 
Asymptotic expansions of generalized matrix entries of representations
of real reductive groups, 
Lie group representations, I, 287--369, Lecture Notes in
Math., \textbf{1024}, Springer Verlag, Berlin, 1983. 

\bibitem{Y}
H. Yamashita, 
Embeddings of discrete series into induced representations of
semisimple Lie groups, I -- General theory and the case $SU(2,2)$--. 
J. Math. Kyoto Univ. \textbf{31-2} (1991), 543--571. 

\end{thebibliography}
\end{document}